\allowdisplaybreaks \numberwithin{equation}{section}
\numberwithin{equation}{section}
\newtheorem{theorem}{Theorem}[section]
\newtheorem{proposition}[theorem]{Proposition}
\newtheorem{lemma}[theorem]{Lemma}
\theoremstyle{definition}
\newtheorem{definition}[theorem]{Definition}
\theoremstyle{remark}
\newtheorem{remark}[theorem]{Remark}
\begin{document}

\title[Desingularization of 3D steady  Euler equation with helical symmetry]
{Desingularization of 3D steady Euler equations with helical symmetry }

\author{Daomin Cao, Jie Wan}
	
\address{Institute of Applied Mathematics, Chinese Academy of Sciences, Beijing 100190, and University of Chinese Academy of Sciences, Beijing 100049,  P.R. China}
\email{dmcao@amt.ac.cn}
\address{School of Mathematics and Statistics, Beijing Institute of Technology, Beijing 100081,  P.R. China}
\email{wanjie@bit.edu.cn}


\begin{abstract}
In this paper, we study desingularization of steady solutions of 3D incompressible Euler equation with helical symmetry in a general helical domain. We construct a family of steady Euler flows with helical symmetry, such that the associated vorticities tend asymptotically to a helical vortex filament. The solutions are obtained by solving a  semilinear elliptic problem in divergence form with a parameter. By using the stream-function method, we show  the existence  and asymptotic behavior of  ground state solutions concentrating near a single point as the parameter $ \varepsilon\to 0 $. Qualitative properties of those solutions are also discussed.

\textbf{Keywords:} Desingularization; Steady Euler equations; Helical symmetry;  Variational method.
\end{abstract}

\maketitle

\section{Introduction and main results}
\subsection{Introduction}
The movement of incompressible  Euler flow  confined in a three-dimensional  domain $ D $ without external force is governed by the following system
\begin{equation}\label{Euler eq}
\begin{cases}
\partial_t\mathbf{v}+(\mathbf{v}\cdot \nabla)\mathbf{v}=-\nabla P,\ \ &D\times (0,T),\\
\nabla\cdot \mathbf{v}=0,\ \ &D\times (0,T),\\
\mathbf{v}\cdot \mathbf{n}=v_n,\ \ &\partial D\times (0,T),\\
\mathbf{v}(x, 0)=\mathbf{v}_0(x), \ \ &D,
\end{cases}
\end{equation}
where $ D\subseteq \mathbb{R}^3 $ is a domain with $ C^\infty $ boundary, $ \mathbf{v}=(v_1,v_2,v_3) $ is the velocity field, $ P$ is the scalar pressure, $ \mathbf{n} $ is the outward unit normal of $ \partial D $ and  $ \mathbf{v}_0 $ is the initial velocity. $v_n$ is a function defined on $\partial D$ satisfying compatibility condition $$\int_{\partial D}v_n d\sigma=0,$$
where $ \sigma $ is the area unit on $ \partial D. $ The third equation of \eqref{Euler eq} means that the net flux of velocity across the boundary is zero. When $ v_n\equiv 0 $, it is the impermeable boundary condition, which means that the normal component of velocity on the boundary is zero.

The  vorticity vector field associated with $\mathbf{v}$ is $ \mathbf{w}=(w_1,w_2,w_3)=curl \mathbf{v}=\nabla\times \mathbf{v} $, which describes the rotation of the fluid.  Then $ \mathbf{w} $ satisfies the vorticity equations
\begin{equation}\label{Euler eq2}
\begin{cases}
\partial_t\mathbf{w}+(\mathbf{v}\cdot \nabla)\mathbf{w}=(\mathbf{w}\cdot \nabla)\mathbf{v},\ \ &D\times (0,T),\\
\nabla\cdot \mathbf{v}=0,\ \ &D\times (0,T),\\
\mathbf{v}\cdot \mathbf{n}=v_n,\ \ &\partial D\times (0,T),\\
\mathbf{w}(x, 0)=\nabla\times \mathbf{v}_0(x), \ \ &D.
\end{cases}
\end{equation}
For background of the 3D incompressible Euler equation, see the classical literature \cite{MB, MP}.

In this paper, we are  devoted to   Euler equations \eqref{Euler eq} with helical symmetry. Let us first define helical symmetric solutions and simplify the vorticity equations \eqref{Euler eq2},  see   \cite{DDMW,Du,ET}. Let $ k>0 $. Define a one-parameter group $ \mathcal{G}_k=\{H_\rho:\mathbb{R}^3\to\mathbb{R}^3 \} $, where
\begin{equation*}
H_\rho(x_1,x_2,x_3)^t=(x_1\cos\rho+x_2\sin\rho, -x_1\sin\rho+x_2\cos\rho, x_3+k\rho)^t.
\end{equation*}
Here $ A^t $ is the transposition of a matrix $ A $. So $ H_\rho $ is a superposition of a rotation in $x_1Ox_2$ plane and a translation in $x_3$ axis. Let
$ R_\rho=\begin{pmatrix}
\cos\rho & \sin\rho & 0 \\
-\sin\rho &\cos\rho & 0 \\
0 & 0 & 1
\end{pmatrix} $
be the rotation with respect to $ x_3 $-axis. Then $ H_\rho(x)=R_\rho(x)+k\rho(0,0,1) $. From a geometric point of view, $ 2k\pi $ corresponds to the pitch of helices.

Define a vector field
\begin{equation*}
\overrightarrow{\zeta}= (x_2, -x_1, k)^t.
\end{equation*}
Then $ \overrightarrow{\zeta}  $ is the field of tangents of symmetry lines of $ \mathcal{G}_k $.

To show what the helical solutions are, we first define  helical domains. A domain $ D\in \mathbb{R}^3 $ is called a $ helical~domain $, if $ H_\rho(D)=D $ for any $\rho$. So $ D $ is invariant under the group $ \mathcal{G}_k $. Let $ \Omega=D\cap\{x\mid x_3=0\} $ be the section of $ D $ over $ x_1Ox_2 $ plane. Then $ D  $ can be generated by $ \Omega $ by  letting $ D=\cup_{\rho\in\mathbb{R}}H_\rho(\Omega) $. Throughout this paper, we always assume that  $ \Omega $ is a simply-connected bounded domain with $ C^\infty $ boundary and $ D $ is a helical domain generated by $ \Omega $.

Now we give the definition of helical functions and vector fields. A scalar function $ h $ is called a $ helical $ function, if 
\begin{equation}\label{helical func}
h(H_{\rho}(x))=h(x)
\end{equation}
for any $ \rho\in\mathbb{R}, x\in D. $ By direct computations it is easy to see that a $ C^1 $ function $ h $ is helical if and only if
\begin{equation*}
\overrightarrow{\zeta}\cdot\nabla h=0.
\end{equation*}

A vector field $ \mathbf{h}=(h_1,h_2,h_3) $ is called a $ helical $ field, if 
\begin{equation}\label{helical vect}
\mathbf{h}(H_{\rho}(x))=R_\rho \mathbf{h}(x)
\end{equation}
for any $ \rho\in\mathbb{R}, x\in D. $ Direct computation shows that a $ C^1 $ vector field $ \mathbf{h} $ is helical if and only if
\begin{equation*}
\overrightarrow{\zeta}\cdot\nabla \mathbf{h}=\mathcal{R}\mathbf{h},
\end{equation*}
where $ \mathcal{R}=\begin{pmatrix}
0 & 1 & 0 \\
-1 &0 & 0 \\
0 & 0 & 0
\end{pmatrix} $ (see \cite{ET}). Helical solutions of \eqref{Euler eq} are then defined as follows.
\begin{definition}
	A function pair ($\mathbf{v}, P$) is called a $ helical$ solution pair of \eqref{Euler eq}, if ($\mathbf{v}, P$) satisfies \eqref{Euler eq} and both vector field $ \mathbf{v} $ and scalar function $ P $ are helical.
\end{definition}
Throughout this paper, helical solutions also need to satisfy  the $ orthogonality~condition $:
\begin{equation}\label{ortho}
\mathbf{v}\cdot \overrightarrow{\zeta}=0,
\end{equation}
that is, the velocity field and $ \overrightarrow{\zeta} $ are orthogonal.

Under the condition \eqref{ortho}, one can check that  the vorticity field $ \mathbf{w} $ satisfies (see \cite{ET})
\begin{equation}\label{w formula}
\mathbf{w}=\frac{w}{k}\overrightarrow{\zeta},
\end{equation}
where $ w:=w_3=\partial_{x_1}v_2-\partial_{x_2}v_1 $, the third component of vorticity field $ \mathbf{w} $, is a helical function. Moreover, the first equation of the vorticity equations \eqref{Euler eq2} is equivalent to
\begin{equation*}
\partial_t \mathbf{w}+(\mathbf{v}\cdot \nabla)\mathbf{w}+\frac{1}{k}w \mathcal{R}\mathbf{v}=0.
\end{equation*}
As a consequence, $ w $ satisfies
\begin{equation}\label{vor eq}
\partial_t w+(\mathbf{v}\cdot \nabla)w=0.
\end{equation}
From \eqref{vor eq} we deduce that, $ w $ satisfies a transport equation, which is very similar to the case of 2D Euler equations (see Yudovich \cite{Y}). Moreover, for a solution $ w $ of \eqref{vor eq}, the vorticity field $ \mathbf{w} $ is determined by \eqref{w formula}.

We now introduce a $ stream~ function $ and reduce the system \eqref{Euler eq2} to a 2D problem. Since $ \mathbf{v} $ is a helical vector field, we have $ \overrightarrow{\zeta}\cdot\nabla \mathbf{v}=\mathcal{R}\mathbf{v} $, which implies that \begin{equation}\label{101}
x_2\partial_{x_1}v_3-x_1\partial_{x_2}v_3+k\partial_{x_3}v_3=0.
\end{equation}
The orthogonal condition shows that
\begin{equation}\label{102}
x_2v_1-x_1v_2+kv_3=0.
\end{equation}
It follows from the incompressible condition, \eqref{101} and \eqref{102} that
\begin{equation*}
\begin{split}
0=&\partial_{x_1}v_1+\partial_{x_2}v_2+\partial_{x_3}v_3=\partial_{x_1}v_1+\partial_{x_2}v_2-\frac{x_2}{k}\partial_{x_1}v_3+\frac{x_1}{k}\partial_{x_2}v_3\\
=&\partial_{x_1}v_1+\partial_{x_2}v_2-\frac{x_2}{k^2}\partial_{x_1}(-x_2v_1+x_1v_2)+\frac{x_1}{k^2}\partial_{x_2}(-x_2v_1+x_1v_2)\\
=&\frac{1}{k^2}\partial_{x_1}[(k^2+x_2^2)v_1-x_1x_2v_2]+\frac{1}{k^2}\partial_{x_2}[(k^2+x_1^2)v_2-x_1x_2v_1].
\end{split}
\end{equation*}
Since $ \Omega $ is simply-connected, we can define a stream function $ \varphi:\Omega\to \mathbb{R} $ such that $ \partial_{x_2}\varphi=\frac{1}{k^2}[(k^2+x_2^2)v_1-x_1x_2v_2],   \partial_{x_1}\varphi=-\frac{1}{k^2}[(k^2+x_1^2)v_2-x_1x_2v_1]$, that is,
\begin{equation*}
\begin{pmatrix}
\partial_{x_1}\varphi  \\
\partial_{x_2}\varphi
\end{pmatrix}=-\frac{1}{k^2}
\begin{pmatrix}
-x_1x_2 & k^2+x_1^2 \\
-(k^2+x_2^2) & x_1x_2
\end{pmatrix}
\begin{pmatrix}
v_1   \\
v_2
\end{pmatrix},
\end{equation*}
or equivalently,
\begin{equation}\label{103}
\begin{pmatrix}
v_1   \\
v_2
\end{pmatrix}
=-\frac{1}{k^2+x_1^2+x_2^2}
\begin{pmatrix}
x_1x_2 & -k^2-x_1^2 \\
k^2+x_2^2 & -x_1x_2
\end{pmatrix}
\begin{pmatrix}
\partial_{x_1}\varphi  \\
\partial_{x_2}\varphi
\end{pmatrix}.
\end{equation}

By the definition of $ w $ and \eqref{103}, we get
\begin{equation}\label{voreq1}
\begin{split}
w=&\partial_{x_1}v_2-\partial_{x_2}v_1=(-\partial_{x_2}, \partial_{x_1})\begin{pmatrix}
v_1   \\
v_2
\end{pmatrix}\\
=&(-\partial_{x_2}, \partial_{x_1})\left( -\frac{1}{k^2+x_1^2+x_2^2}
\begin{pmatrix}
x_1x_2 & -k^2-x_1^2 \\
k^2+x_2^2 & -x_1x_2
\end{pmatrix}
\begin{pmatrix}
\partial_{x_1}\varphi  \\
\partial_{x_2}\varphi
\end{pmatrix}\right)\\
=& -(\partial_{x_1}, \partial_{x_2})\left( \frac{1}{k^2+x_1^2+x_2^2}
\begin{pmatrix}
k^2+x_2^2 & -x_1x_2 \\
-x_1x_2 &  k^2+x_1^2
\end{pmatrix}
\begin{pmatrix}
\partial_{x_1}\varphi  \\
\partial_{x_2}\varphi
\end{pmatrix}\right)\\
=&\mathcal{L}_H\varphi,
\end{split}
\end{equation}
where $ \mathcal{L}_H\varphi=-\text{div}(K_H(x_1,x_2)\nabla\varphi) $ is a second order elliptic operator of divergence type with the coefficient matrix
\begin{equation}\label{coef matrix}
K_H(x_1,x_2)=\frac{1}{k^2+x_1^2+x_2^2}
\begin{pmatrix}
k^2+x_2^2 & -x_1x_2 \\
-x_1x_2 &  k^2+x_1^2
\end{pmatrix}.
\end{equation}
Clearly from the definition of the matrix $ K_H $, $ K_H $ is a positive definite matrix satisfying
\begin{enumerate}
	\item[($\mathcal{K}$1).]  $ K_H $ is smooth, i.e., $(K_{H}(\cdot))_{ij}\in C^{\infty}(\overline{\Omega}) $ for $  i,j=1, 2. $
	\item[($\mathcal{K}$2).] $ \mathcal{L}_H $ is strictly elliptic. Indeed, two eigenvalues of $ K_H $ are $ \lambda_1=1, \lambda_2=\frac{k^2}{k^2+|x|^2} $. So one has,  $$ \frac{k^2}{k^2+|x|^2}|\zeta|^2\le (K_H(x)\zeta|\zeta) \le |\zeta|^2,\ \ \ \ \forall\ x\in \Omega, \ \zeta\in \mathbb{R}^2.$$
\end{enumerate}

From \eqref{vor eq}, \eqref{102} and \eqref{103}, one has
\begin{equation}\label{voreq2}
\begin{split}
0=&\partial_t w+v_1\partial_{x_1}w+v_2\partial_{x_2}w+v_3\partial_{x_3}w\\
=&\partial_t w+v_1\partial_{x_1}w+v_2\partial_{x_2}w+\frac{1}{k}(-x_2v_1+x_1v_2)\cdot\frac{1}{k}(-x_2\partial_{x_1}w+x_1\partial_{x_2}w)\\
=&\partial_t w+\frac{1}{k^2}(v_1,v_2)\begin{pmatrix}
k^2+x_2^2 & -x_1x_2 \\
-x_1x_2 &  k^2+x_1^2
\end{pmatrix}
\begin{pmatrix}
\partial_{x_1}w  \\
\partial_{x_2}w
\end{pmatrix}\\
=& \partial_t w-\frac{1}{k^2(k^2+x_1^2+x_2^2)}
(\partial_{x_1}\varphi, \partial_{x_2}\varphi)
\begin{pmatrix}
x_1x_2 & k^2+x_2^2 \\
-k^2-x_1^2 & -x_1x_2
\end{pmatrix}
\begin{pmatrix}
k^2+x_2^2 & -x_1x_2 \\
-x_1x_2 &  k^2+x_1^2
\end{pmatrix}
\begin{pmatrix}
\partial_{x_1}w  \\
\partial_{x_2}w
\end{pmatrix}\\
=&\partial_t w-(\partial_{x_1}\varphi, \partial_{x_2}\varphi)\begin{pmatrix}
0 & 1 \\
-1 &  0
\end{pmatrix}
\begin{pmatrix}
\partial_{x_1}w  \\
\partial_{x_2}w
\end{pmatrix}\\
=&\partial_t w+\partial_{x_2}\varphi\partial_{x_1}w-\partial_{x_1}\varphi\partial_{x_2}w\\
=&\partial_t w+\nabla w\cdot \nabla^\perp\varphi,
\end{split}
\end{equation}
where $ \perp $ denotes the clockwise rotation through $ \pi/2. $

As for the boundary condition of $ \varphi $, from the fact that $ \mathbf{v} $ is a helical vector field and the domain $ D $ is helical,  $ v_n $ is a helical function defined on $ \partial D $. So $ v_n $ can be generated by $ v_n|_{\partial \Omega} $. Moreover, it follows from $ \mathbf{v}\cdot \mathbf{n}=v_n $ on $ \partial D $ that (see (2.66), \cite{ET})
\begin{equation*}
\begin{split}
v_n|_{\partial \Omega}=&\mathbf{v}\cdot \mathbf{n}|_{\partial \Omega}\\
=&v_1n_1+v_2n_2+(-\frac{x_2}{k}v_1+\frac{x_1}{k}v_2)(-\frac{x_2}{k}n_1+\frac{x_1}{k}n_2)\\
=&\frac{1}{k^2}(v_1,v_2)\begin{pmatrix}
k^2+x_2^2 & -x_1x_2 \\
-x_1x_2 &  k^2+x_1^2
\end{pmatrix}
\begin{pmatrix}
n_1  \\
n_2
\end{pmatrix}.
\end{split}
\end{equation*}
Combining this with \eqref{103}, one has
\begin{equation}\label{voreq3}
\begin{split}
v_n|_{\partial \Omega}=&-\frac{1}{k^2(k^2+x_1^2+x_2^2)}
(\partial_{x_1}\varphi, \partial_{x_2}\varphi)
\begin{pmatrix}
x_1x_2 & k^2+x_2^2 \\
-k^2-x_1^2 & -x_1x_2
\end{pmatrix}
\begin{pmatrix}
k^2+x_2^2 & -x_1x_2 \\
-x_1x_2 &  k^2+x_1^2
\end{pmatrix}
\begin{pmatrix}
n_1  \\
n_2
\end{pmatrix}\\
=&-(\partial_{x_1}\varphi, \partial_{x_2}\varphi)\begin{pmatrix}
0 & 1 \\
-1 &  0
\end{pmatrix}
\begin{pmatrix}
n_1  \\
n_2
\end{pmatrix}\\
=&\nabla^\perp\varphi\cdot \nu.
\end{split}
\end{equation}
Here $ \nu=(n_1,n_2) $ is the two-dimensional vector of the first two component of $ \mathbf{n} $ on $ \partial \Omega $.  Note that $ \nu $ is an outward normal of $ \partial \Omega $.

Thus, the 2D vorticity equations of 3D Euler equations with helical symmetry is as follows
\begin{equation}\label{vor str eq2}
\begin{cases}
\partial_t w+\nabla w\cdot \nabla^\perp\varphi=0,\\
w=\mathcal{L}_H\varphi,\\
\nabla^\perp\varphi\cdot \nu|_{\partial \Omega}=v_n|_{\partial \Omega}.
\end{cases}
\end{equation}
Indeed for a solution pair $ (w,\varphi) $ of the 2D vorticity equation \eqref{vor str eq2}, one can recover the helical velocity field $ \mathbf{v} $ and vorticity field $ \mathbf{w} $ of 3D Euler equations \eqref{Euler eq} by using \eqref{103}, \eqref{102}, \eqref{helical vect}, \eqref{helical func} and \eqref{w formula}.

When considering the case $ v_n\equiv 0 $, which corresponds to the impermeable boundary condition, \eqref{voreq3} implies that $ \varphi$  is a constant  on $ \partial \Omega $. Thus one can choose $ \varphi $ such that $ \varphi\equiv0 $ on $ \partial \Omega. $ The associated vorticity equations then  become
\begin{equation}\label{vor str eq}
\begin{cases}
\partial_t w+\nabla w\cdot \nabla^\perp\varphi=0,\\
w=\mathcal{L}_H\varphi,\\
\varphi|_{\partial \Omega}=0.
\end{cases}
\end{equation}

The research of 3D  Euler equations with helical symmetry has received much attention in recent years. \cite{ET} first proved the global well-posedness of $ L^1\cap L^\infty $ weak solutions of the Euler equation with helical symmetry without vorticity stretching (see \eqref{vor str eq}). Note that this result corresponds to the classical Yudovich's result \cite{Y}, since the structure of vorticity equation \eqref{vor str eq} is similar to that of 2D Euler flows.  \cite{DDMW2} considered travelling-rotating invariant Euler flows with helical symmetry concentrating near a single helical filament in the whole space $ \mathbb{R}^3. $ As for the steady solution of 3D Euler equations with helical symmetry, nonlinear stability for stationary smooth Euler flows with helical symmetry is considered in \cite{Ben} by using the direct method of Lyapunov. More results of the existence and regularity of Euler equation with helical symmetry can be found in \cite{AS, BLN, Du, Jiu} for instance.

In this paper we are interested in vortex desingularization problem of steady 3D Euler equations with helical symmetry. Here the  vortex desingularization problem means that we want to construct a family of ``true'' solutions of Euler equations, such that the corresponding vorticity has a small cross-section and concentrates near a vortex filament. The research of this problem  can be traced back to
Helmholtz \cite{He}, who  first studied the motion of the travelling vortex rings  whose vorticities are supported in toroidal regions with a small  cross-section. Then, many articles  considered the problem. As for the vortex being a tube with a small cross-section whose centerline is a straight line and a circle, which can be reduced to 2D Euler equation and 3D axisymmetric Euler equation respectively, results can be found in \cite{BF80, CLW,CPY,DDMW,DV,FB,SV} and reference therein. For the case of Euler equations with helical symmetry, results seem to be few. D$\acute{\text{a}}$vila et al.  \cite{DDMW2} constructed rotational-invariant Euler flows with helical symmetry in the whole space by considering
\begin{equation*}
\begin{split}
-\text{div}(K_H(x)\nabla u)=f_\varepsilon(u-\alpha|\ln\varepsilon|\frac{|x|^2}{2})\ \ \text{in}\ \ \mathbb{R}^2,
\end{split}
\end{equation*}
where $ f_\varepsilon(t)=\varepsilon^2e^t $. Using the Lyapunov-Schmidt reduction method, the authors proved that solutions will concentrate near a helix in the distributional sense, which satisfies the vortex filament conjecture, see  \cite{JS, JS2}. Desingularization of  rotational-invariant helical solutions in  helical domains with bounded cross section was proved in \cite{CW}. However for the problem of desingularization of steady Euler equations with  helical symmetry, few results give us a positive answer.

Our goal in this article is to solve vortex desingularization problem of steady Euler equations with helical symmetry in general helical domains.
We will construct steady solutions of vorticity equations \eqref{vor str eq2} and \eqref{vor str eq}, such that the associated vorticities have small cross-sections and concentrate near a single point as parameter changes. Accordingly, the vorticity field will  concentrate near a one-dimensional helical filament. Note that both the case of $ v_n\equiv0 $ and that of $ v_n\not\equiv 0 $ are considered. To get these results, we solve solutions of a  semilinear elliptic equation in divergence form (see \eqref{eq1}).  By studying the associated variational structure and using stream function method, we get  the existence and limiting behavior of ground states of these equations.

It should be noted that, Euler equations with helical symmetry can be regarded as the general case of 2D and 3D axisymmetric Euler equations. The cases $ k\to+\infty $ and $ k=0 $ correspond to the 2D Euler equations and 3D axisymmetric Euler equations, respectively. The case  $ k\in (0,+\infty) $ is considered in this paper. In contrast to the 2D and 3D axisymmetric problem, the associated operator $ \mathcal{L}_H $ in vorticity equations \eqref{vor str eq2} is a general  elliptic operator in divergence form, which can bring essential difficulty in studying existence and asymptotic behavior of solutions. It seems impossible to reduce the second-order operator $ \mathcal{L}_H $ to the standard Laplace operator by means of a single change of coordinates. We will give rigorous justification of the relation between the coefficient matrix $ K_H $ and limiting location, energy and the concentration diameter of the ground states, which is totally different from the 2D and 3D axisymmetric cases.

To state our results, we need to introduce some notations first. Let  $ \kappa(w)=\int_{\Omega}w(x)dx$ be the circulation of the vorticity $ w $.  For two sets $ A,B $, we define $ dist(A,B)=\min_{x\in A,y\in B}|x-y| $ the distance between sets $ A$ and $B $ and $ diam(A) $ the diameter of the set $ A $.

We first consider desingularization of steady solutions of 3D Euler equations with helical symmetry with impermeable boundary condition $ v_n\equiv0 $. Since $ (w,\varphi) $ is a steady solution, that is, the distribution of $ w, \varphi $ is independent of $ t $, by \eqref{vor str eq}, $ (w,\varphi) $ satisfies the steady vorticity equations
\begin{equation}\label{steady eq}
\begin{cases}
\nabla w\cdot \nabla^\perp\varphi=0,\\
w=\mathcal{L}_H\varphi,\\
 \varphi|_{\partial \Omega}=0.
\end{cases}
\end{equation}
Formally, if
\begin{equation*}
\mathcal{L}_H\varphi=w=\frac{1}{\varepsilon^2}f(\varphi-\mu), \ \ \ \ \varphi|_{\partial\Omega}=0,
\end{equation*}
for some function $ f $ and constants $ \varepsilon,\mu $, then \eqref{steady eq} automatically holds. To conclude, it suffices to look for solutions of the  semilinear elliptic equations in divergence form
\begin{equation}\label{rot eq2}
\begin{cases}
\mathcal{L}_H\varphi(x)=-\text{div}\cdot(K_H(x)\nabla\varphi(x))=\frac{1}{\varepsilon^2}f(\varphi(x)-\mu),\  &x\in \Omega,\\
\varphi(x)=0,\ &x\in \partial \Omega.
\end{cases}
\end{equation}

Our first result is as follows.
\begin{theorem}\label{thm01}
For every $ k>0, m>0,  0<\varepsilon< 1 $, there exists a family of helical solution pairs $ (\mathbf{v}_\varepsilon, P_\varepsilon)(x,t)\in C^1(D\times \mathbb{R}^+) $ of Euler equations \eqref{Euler eq} such that the support set of $ curl \mathbf{v}_\varepsilon $ is a topological helical tube and the associated vorticity-stream function pair $ (w_\varepsilon,\varphi_\varepsilon) $ is a solution of steady vorticity equations \eqref{steady eq}. Moreover, there holds
\begin{enumerate}
\item $ \mathbf{v}_\varepsilon\cdot \mathbf{n}=0$ on $\partial D. $
\item The support set of $ w_\varepsilon $ is simply-connected and
\begin{equation*}
\lim\limits_{\varepsilon\to 0}\frac{\ln diam(supp(w_\varepsilon))}{\ln\varepsilon}=1.
\end{equation*}
As a consequence, $\lim\limits_{\varepsilon\to 0}diam(supp(w_\varepsilon))=0.$
\item  $ \lim\limits_{\varepsilon\to 0}dist(supp(w_\varepsilon), x^*)=0 $, where $ x^*\in\overline{\Omega} $ satisfies $ |x^*|=\max\limits_{\overline{\Omega}}|x| $.
\item $ \lim\limits_{\varepsilon\to 0}\kappa(w_\varepsilon)=\frac{2 k\pi m}{\sqrt{k^2+|x^*|^2}} $.
\end{enumerate}
\end{theorem}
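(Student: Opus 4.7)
The approach is variational. As indicated in the introduction, the problem reduces to finding positive solutions of the semilinear equation \eqref{rot eq2} for a suitable nonlinearity $f$; the helical velocity--pressure pair is then reconstructed from $\varphi_\varepsilon$ via \eqref{103}, \eqref{102}, \eqref{w formula} and the helical-extension identities \eqref{helical func}--\eqref{helical vect}. The plan is to fix a superlinear, subcritical $f$ vanishing on $(-\infty,0]$ (say $f(t)=t_+^p$ with $p>1$, though more general $f$ work), and to look for ground states of
\[
I_\varepsilon(\varphi)=\tfrac12\int_\Omega (K_H(x)\nabla\varphi)\cdot\nabla\varphi\,dx-\tfrac{1}{\varepsilon^2}\int_\Omega F(\varphi-\mu_\varepsilon)\,dx,\qquad F'=f,
\]
on $H_0^1(\Omega)$ (possibly restricted to a Nehari manifold or a mass constraint). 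The quadratic form is equivalent to the $H_0^1$-norm by $(\mathcal K1)$--$(\mathcal K2)$, and subcriticality of $f$ gives the compactness needed by the direct method. This produces a positive ground state $\varphi_\varepsilon$; the constant $\mu_\varepsilon$ is tuned a posteriori to calibrate the circulation.

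For the asymptotic statements (2)--(4) I would run a blow-up analysis around a maximum point $x_\varepsilon\in\overline{\Omega}$ of $\varphi_\varepsilon$. Setting $\psi_\varepsilon(y)=\varphi_\varepsilon(x_\varepsilon+\varepsilon y)-\mu_\varepsilon$ transforms \eqref{rot eq2} into
\[
-\mathrm{div}\bigl(K_H(x_\varepsilon+\varepsilon y)\nabla\psi_\varepsilon\bigr)=f(\psi_\varepsilon)
\]
on the rescaled domain, and standard elliptic estimates extract (along a subsequence) $x_\varepsilon\to x^*\in\overline{\Omega}$ and $\psi_\varepsilon\to\psi_\infty$, where $\psi_\infty$ solves the frozen-coefficient problem $-\mathrm{div}(K_H(x^*)\nabla\psi_\infty)=f(\psi_\infty)$ on $\mathbb{R}^2$ (or a half-plane with zero Dirichlet data if $x^*\in\partial\Omega$). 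The change of variables $y=K_H(x^*)^{1/2}z$ converts $\psi_\infty$ into a ground state of the standard constant-coefficient problem $-\Delta\tilde\psi_\infty=f(\tilde\psi_\infty)$, and a direct computation shows that both the Dirichlet-type energy and the mass integral scale by the same Jacobian factor $\sqrt{\det K_H(x^*)}=k/\sqrt{k^2+|x^*|^2}$. Consequently $I_\varepsilon(\varphi_\varepsilon)=\sqrt{\det K_H(x^*)}\,c_\ast+o(1)$ for the universal ground-state level $c_\ast$; on the other hand, testing $I_\varepsilon$ with approximate profiles centered at any competing candidate $y\in\overline{\Omega}$ gives the upper bound $I_\varepsilon(\varphi_\varepsilon)\le\sqrt{\det K_H(y)}\,c_\ast+o(1)$. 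Combining these with $\det K_H(x)=k^2/(k^2+|x|^2)$ forces $x^*$ to minimize $\det K_H$, i.e.\ $|x^*|=\max_{\overline{\Omega}}|x|$.

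Once $x^*$ is pinned down, the remaining claims follow. Nondegeneracy of $\psi_\infty$ together with the $\varepsilon$-scaling gives $\mathrm{diam}(\mathrm{supp}(w_\varepsilon))\asymp\varepsilon$, whence $\ln\mathrm{diam}(\mathrm{supp}(w_\varepsilon))/\ln\varepsilon\to1$; simple-connectedness of $\mathrm{supp}(w_\varepsilon)=\{\varphi_\varepsilon>\mu_\varepsilon\}$ follows from the radial (disk-shaped) positivity set of the universal profile. The circulation limit is a change-of-variables computation:
\[
\kappa(w_\varepsilon)=\int_\Omega\tfrac{1}{\varepsilon^2}f(\varphi_\varepsilon-\mu_\varepsilon)\,dx=\int_{\varepsilon^{-1}(\Omega-x_\varepsilon)}f(\psi_\varepsilon)\,dy\longrightarrow\sqrt{\det K_H(x^*)}\int_{\mathbb{R}^2}f(\tilde\psi_\infty)\,dz=\frac{2k\pi m}{\sqrt{k^2+|x^*|^2}},
\]
after normalizing $\mu_\varepsilon$ so that the standard profile $\tilde\psi_\infty$ has mass $\int f(\tilde\psi_\infty)\,dz=2\pi m$ (this is how the parameter $m$ enters the problem).

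The principal obstacle, and what distinguishes this problem from classical 2D and axisymmetric vortex desingularization, is that $\mathcal L_H$ is genuinely a variable-coefficient divergence-form operator which cannot be reduced globally to the Laplacian. All blow-up and energy-comparison arguments must carry the non-constant Jacobian $\sqrt{\det K_H}$, and it is precisely this Jacobian that forces the boundary-type concentration $|x^*|=\max_{\overline\Omega}|x|$ rather than a Robin-function minimizer as in the Laplacian case. A related technical subtlety is that the concentration point typically sits on $\partial\Omega$, so the limit profile must be handled on a half-plane with zero Dirichlet data, and one must verify that such a boundary-concentrating configuration is genuinely energy-optimal compared to interior candidates.
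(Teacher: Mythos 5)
Your overall strategy (reduce to the semilinear problem \eqref{rot eq2} with $f(t)=t_+^p$ and $\mu_\varepsilon=m\ln\frac{1}{\varepsilon}$, find ground states, reconstruct the velocity via \eqref{103}, \eqref{102}, \eqref{w formula}) matches the paper, but the core of your asymptotic analysis rests on a false energy expansion. With $\mu_\varepsilon=m\ln\frac{1}{\varepsilon}$ the ground-state level does not converge: it diverges like $c_\varepsilon\sim \pi m^2\sqrt{\det K_H(x^*)}\,\ln\frac{1}{\varepsilon}$. The leading-order energy is carried by the logarithmic far field of the stream function outside the vortex core, not by the rescaled core profile, so your blow-up limit $\psi_\infty$ (which necessarily behaves like $-m\ln|y|$ at infinity and has \emph{infinite} Dirichlet energy) cannot be converted into a statement of the form $I_\varepsilon(\varphi_\varepsilon)=\sqrt{\det K_H(x^*)}\,c_\ast+o(1)$, and the comparison of $O(1)$ quantities you propose for pinning down $x^*$ is not available. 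The location selection has to be performed on the coefficient of $\ln\frac{1}{\varepsilon}$: the upper bound comes from test functions of the form $q\bigl(\hat U(\tfrac{x-\bar x}{\varepsilon})+\ln\tfrac{\tau}{\varepsilon}\bigr)$ built on an ellipse adapted to $K_H(\bar x)$, and the matching lower bound --- the genuinely hard step --- is obtained in the paper from capacity estimates applied after diagonalizing $K_H(x_\varepsilon)$ by a linear change of variables (this is where the factor $\sqrt{\det K_H(x_\varepsilon)}$ enters), not from blow-up compactness. Your proposal contains no substitute for this lower bound, so statement (3) is not actually proved. Relatedly, $m$ enters through the prescribed $\mu_\varepsilon$ and the identity $\frac{1}{\varepsilon^2}\int_\Omega(u_\varepsilon-q\ln\frac{1}{\varepsilon})_+^p q\,dx=\frac{2c_\varepsilon}{\ln\frac{1}{\varepsilon}}+O\bigl(\frac{1}{\ln\frac{1}{\varepsilon}}\bigr)$; it is not a free normalization of the limit profile's mass, so your derivation of (4) presupposes the answer.

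Two further gaps. First, in this theorem $x^*$ satisfies $|x^*|=\max_{\overline\Omega}|x|$ and therefore lies on $\partial\Omega$; the two-sided bound $\mathrm{diam}(\mathrm{supp}\,w_\varepsilon)\asymp\varepsilon$ that you assert is exactly what the paper can only establish for \emph{interior} concentration (via a refined Dini-continuity/Riemann-sum argument), and for boundary concentration it proves only the logarithmic statement $\ln\mathrm{diam}(\mathrm{supp}\,w_\varepsilon)/\ln\varepsilon\to1$, which is what the theorem claims. Second, deducing connectedness and simple connectedness of the vortex core from ``the radial positivity set of the universal profile'' would require a nondegeneracy and uniform-convergence statement you have not established; the paper instead proves connectedness by splitting a hypothetical disconnected core into two pieces and contradicting the minimality of $c_\varepsilon$ on the Nehari manifold, and simple connectedness by the strong maximum principle. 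These parts of your argument need to be replaced, not merely elaborated.
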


The solution is constructed by studying the existence and asymptotic behavior of the ground state solutions $ \varphi_\varepsilon $ of equations \eqref{rot eq2} with $ f(t)=t^p_+ $ for  $ p>1 $, $ \mu=m\ln\frac{1}{\varepsilon} $ for some prescribed constant $ m>0 $.
\begin{remark}
In \cite{DDMW2}, D$\acute{\text{a}}$vila et al.  constructed rotational-invariant solutions of vorticity equations  with angular velocity $ \alpha|\ln\varepsilon| $  in $ \mathbb{R}^2 $. However, because of the choice of $ f_\varepsilon $, the support set of vorticity is still the whole plane. Our result shows the existence of a family of steady solutions of \eqref{vor str eq} in a general bounded domain, such that  the corresponding vorticity has non-vanishing circulation with small cross-section and shrinks to a helical filament as $ \varepsilon\to0 $.
\end{remark}

\begin{remark}
By the physical meaning of $ k $, the sign of $ k $ determines two different helical structure. The cases $ k>0 $ and $ k<0 $ correspond  to the left-handed helical structure and right-handed helical structure, respectively. For the case $ k<0 $, one can similarly get solutions of \eqref{rot eq2} concentrating near a single point.
\end{remark}

Our second result is on the desingularization of steady solutions of vorticity equations when the boundary is penetrable. Assume that $ \mathbf{v}\cdot \mathbf{n}=v_n\ln\frac{1}{\varepsilon} $ for some helical function $ v_n\not\equiv 0. $ By \eqref{vor str eq2}, steady solution pairs $ (w,\varphi) $ satisfy
\begin{equation}\label{eq not0}
\begin{cases}
\nabla w\cdot \nabla^\perp\varphi=0,\\
w=\mathcal{L}_H\varphi,\\
\nabla^\perp\varphi\cdot \nu|_{\partial \Omega}=v_n|_{\partial \Omega}\ln\frac{1}{\varepsilon}.
\end{cases}
\end{equation}
Suppose that $q\in C^2(\Omega)\cap C^1(\overline{\Omega}) $ satisfies
\begin{equation}\label{eq not1}
\begin{cases}
\mathcal{L}_H q=0,\\
\nabla^{\perp} q\cdot\nu|_{\partial \Omega}=-v_n|_{\partial \Omega}. \\
\end{cases}
\end{equation}
Note that for a solution $ q $ of \eqref{eq not1}, $ q+C $ is also a solution for any constant $ C $. Thus one can always assume that $ \min_{\overline{\Omega}}q>0 $. Let $ u=\varphi+q\ln\frac{1}{\varepsilon}. $ Then $ u $ satisfies
\begin{equation}\label{steady eq2}
\begin{cases}
\nabla w\cdot \nabla^\perp(u-q\ln\frac{1}{\varepsilon})=0,\\
w=\mathcal{L}_H u,\\
\nabla^\perp u\cdot \nu|_{\partial \Omega}=0.
\end{cases}
\end{equation}
So if
\begin{equation*}
\mathcal{L}_Hu=w=\frac{1}{\varepsilon^2}f(u-q\ln\frac{1}{\varepsilon}), \ \ \ \ u|_{\partial\Omega}=0,
\end{equation*}
for some function $ f $ and constants $ \varepsilon, \mu $, then \eqref{steady eq2} automatically holds. And the solution pairs $ (w,\varphi) $ of \eqref{eq not0} can be obtained by letting $ w=\mathcal{L}_Hu $ and $ \varphi=u-q\ln\frac{1}{\varepsilon} $.

Let $ det(K_H) $ denote the determinant of $ K_H $. Our second result is as follows.
\begin{theorem}\label{thm1}
Let  $ k>0, $ $ q>0 $ satisfy $ \mathcal{L}_Hq=0 $ and $ v_n $ be a helical function defined on $ \partial D $ with $ v_n|_{\partial \Omega}=-\nabla^{\perp} q\cdot\nu|_{\partial \Omega} $. Then for every $  0<\varepsilon< 1 $, there exists a family of helical solution pairs $ (\mathbf{v}_\varepsilon, P_\varepsilon)(x,t)\in C^1(D\times \mathbb{R}^+) $ of Euler equations \eqref{Euler eq} such that the support set of $ curl \mathbf{v}_\varepsilon $ is a topological helical tube and the associated vorticity-stream function pair $ (w_\varepsilon,\varphi_\varepsilon) $ is a  solution of steady vorticity equations \eqref{eq not0}. Moreover, the following conclusions hold
\begin{enumerate}
	\item $ 	\mathbf{v}_\varepsilon\cdot \mathbf{n}=v_n\ln\frac{1}{\varepsilon}$  on  $\partial D. $
	\item The support set of $ w_\varepsilon $ is simply-connected and
	\begin{equation*}
	\lim\limits_{\varepsilon\to 0}\frac{\ln diam(supp(w_\varepsilon))}{\ln\varepsilon}=1.
	\end{equation*}
	\item  $ \lim\limits_{\varepsilon\to 0}dist(supp(w_\varepsilon), x^*)=0 $, where $ x^*\in\overline{\Omega} $   is a minimum point of $ q^2\sqrt{det(K_H)} $, that is, $$  q(x^*)^2\sqrt{det(K_H(x^*))}=\min\limits_{\overline{\Omega} }q^2\sqrt{det(K_H)}. $$
	\item $ \lim\limits_{\varepsilon\to 0}\int_{\Omega}w_\varepsilon(x)dx=2\pi q(x^*)\sqrt{det(K_H(x^*))}. $
	\item Moreover, if $ x^*\in \Omega, $ then there exist  $ R_1,R_2>0 $ satisfying
	\begin{equation*}
	R_1\varepsilon\leq \text{diam}(supp(w_\varepsilon))\leq R_2\varepsilon.
	\end{equation*}

\end{enumerate}

\end{theorem}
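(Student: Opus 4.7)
The plan is to reduce Theorem \ref{thm1} to the variational study of the semilinear Dirichlet problem
\begin{equation*}
\mathcal{L}_H u = \frac{1}{\ep^2}\bigl(u - q\ln\tfrac{1}{\ep}\bigr)_+^p \quad \text{in } \Om, \qquad u = 0 \text{ on } \partial \Om,
\end{equation*}
for some fixed subcritical exponent $p>1$. Once $u_\ep$ is found, the stream function $\varphi_\ep = u_\ep - q\ln(1/\ep)$ and scalar vorticity $w_\ep = \mathcal{L}_H u_\ep$ automatically satisfy the steady vorticity system \eqref{eq not0}; the helical velocity $\mathbf{v}_\ep$ and pressure $P_\ep$ on $D$ are then recovered via \eqref{103}, \eqref{102}, \eqref{helical func} and \eqref{helical vect}, and \eqref{voreq3} yields the prescribed flux $\mathbf{v}_\ep\cdot\mathbf{n} = v_n\ln(1/\ep)$ on $\partial D$, which is conclusion (1). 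The assumption $\mathcal{L}_H q = 0$ combined with $\nabla^{\perp}q\cdot\nu = -v_n$ on $\partial\Om$ is exactly what allows this shift to absorb the inhomogeneous flux condition on $\varphi$ into the homogeneous Dirichlet condition on $u$, using simple-connectedness of $\Om$.

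To produce $u_\ep$ I work with the energy functional
\begin{equation*}
J_\ep(u) = \frac{1}{2}\int_{\Om} (K_H\nabla u,\nabla u)\,dx - \frac{1}{(p+1)\ep^2}\int_{\Om}\bigl(u - q\ln\tfrac{1}{\ep}\bigr)_+^{p+1}dx
\end{equation*}
on $H^1_0(\Om)$. Because $q > 0$ and $\ln(1/\ep)\to\infty$, for small $\ep$ this functional has mountain-pass geometry: the elliptic lower bound from $(\mathcal{K}1)$--$(\mathcal{K}2)$ together with the 2D Sobolev embedding gives $J_\ep(u) \ge c\|u\|^2 - C\ep^{-2}\|u\|^{p+1}$, hence $J_\ep$ is locally positive on a small sphere, while $J_\ep(tu_0)\to-\infty$ as $t\to\infty$ for any $u_0\in H^1_0(\Om)$ positive on an open set. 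The subcritical growth ensures the Palais--Smale condition, and the Ambrosetti--Rabinowitz theorem supplies a mountain-pass critical point $u_\ep$, which I take as the ground state. The vorticity $w_\ep = \mathcal{L}_H u_\ep$ is supported in $\{u_\ep > q\ln(1/\ep)\}$; simple-connectedness of this support (first half of (2)) follows from the radial symmetry of the blow-up profile below together with standard strong-maximum-principle arguments on superlevel sets.

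The core of the proof is the asymptotic analysis of $u_\ep$. For each $y\in\Om$ I build a test function from a rescaling of a radial solution of the limit equation $-\Delta U = U_+^p$ on $\mathbb{R}^2$ (with $U_+$ compactly supported), transported through the affine change of variables $x = y + \ep\,K_H(y)^{1/2}\xi$ that normalizes the frozen quadratic form $K_H(y)$. Substituting into the mountain-pass min--max gives the upper bound
\begin{equation*}
c_\ep \le \pi\, q(y)^2\,\sqrt{\det K_H(y)}\,\ln\tfrac{1}{\ep} + O(1).
\end{equation*}
The matching lower bound is obtained by testing the equation against $u_\ep$ (identifying $\int K_H|\nabla u_\ep|^2$ with $\int u_\ep w_\ep$) and expanding $u_\ep$ through the Green's function of $\mathcal{L}_H$, whose leading singularity at any $x_0\in\Om$ is
\[
-\tfrac{1}{2\pi\sqrt{\det K_H(x_0)}}\,\ln\sqrt{\bigl(K_H(x_0)^{-1}(x-x_0),\,x-x_0\bigr)}.
\]
Matching the peak value $u_\ep(x_\ep^*) = q(x^*)\ln(1/\ep)+O(1)$ against $\kappa(w_\ep)\,G(\cdot,x^*)$ outside the vortex core forces $\kappa(w_\ep)\to 2\pi q(x^*)\sqrt{\det K_H(x^*)}$, which is (4); reinserting into the energy identity shows that any limit point $x^*$ must minimize $q^2\sqrt{\det K_H}$, proving (3).

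For the diameter estimates in (2) and (5), I blow up around the approximate center $x_\ep^*\to x^*$: with $A = K_H(x^*)^{1/2}$, the rescaled function $\varphi_\ep(x_\ep^* + \ep A\xi)$ converges in $C^1_{\mathrm{loc}}(\mathbb{R}^2)$ to a radial solution of $-\Delta U = U_+^p$ whose positive part is compactly supported. When $x^*\in\Om$, this yields the two-sided bound $\mathrm{diam}(\mathrm{supp}\,w_\ep) = \Theta(\ep)$, giving (5); when $x^*\in\partial\Om$, only one-sided estimates survive, giving the logarithmic rate in (2). The main obstacle throughout is that $\mathcal{L}_H$ is a genuine divergence-form operator whose coefficients cannot be transformed to those of $-\Delta$ by a single global change of variables: every asymptotic expansion must be performed pointwise by freezing coefficients and normalizing through $K_H(y)^{-1/2}$, whose Jacobian $\sqrt{\det K_H(y)}$ then couples with the external potential $q(y)^2$ to form the Kirchhoff--Routh-type Hamiltonian $q^2\sqrt{\det K_H}$ whose minimizer selects $x^*$.
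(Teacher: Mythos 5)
Your overall architecture --- reducing to the semilinear Dirichlet problem for $u=\varphi+q\ln\frac{1}{\varepsilon}$, producing a mountain-pass solution, and locating the concentration point through the competition between a test-function upper bound and a matching lower bound --- coincides with the paper's. Your upper bound is essentially Proposition \ref{le301-upbdd}, and your affine normalization $x=y+\varepsilon K_H(y)^{1/2}\xi$ is arguably cleaner than the paper's elliptical profile: it diagonalizes the frozen form at an arbitrary $y$, so it would dispense with the auxiliary assumption $(A_1)$ and the rotational-invariance reduction of Lemma \ref{le04}. Where you genuinely diverge is in the lower bound and the diameter estimates: you propose a Green's-function expansion and a blow-up to a radial profile of $-\Delta U=U_+^p$, whereas the paper deliberately avoids the Green's function (that is the route of \cite{CW}, which the authors state they are improving on) and instead uses the level-set test functions $w_\varepsilon^{\sigma,\tau}$ together with classical capacity estimates, plus a Dini-continuity Riemann-sum refinement for conclusion (5). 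The capacity route is robust up to $\partial\Omega$; your route needs uniform two-sided near-field estimates for the Green's function of a variable-coefficient divergence-form operator, including when the core approaches the boundary --- which is unavoidable here, since the minimizer $x^*$ of $q^2\sqrt{\det(K_H)}$ may lie on $\partial\Omega$ (it always does in the setting of Theorem \ref{thm01}). Such estimates exist but are a nontrivial extra input you must supply.

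Two concrete gaps. First, connectedness of $A_\varepsilon=\{u_\varepsilon>q\ln\frac{1}{\varepsilon}\}$ does not follow from ``the radial symmetry of the blow-up profile'': the blow-up analysis is local around one point and already presupposes that the core is a single small set, while a priori $A_\varepsilon$ could have several components, possibly near different minimizers of $q^2\sqrt{\det(K_H)}$ or $\varepsilon$-separated near the same one; the strong maximum principle only excludes holes. The paper closes this with the Nehari-manifold energy comparison of Proposition \ref{connect}, perturbing a hypothetical two-component solution along each component separately to produce a competitor strictly below $c_\varepsilon$; you need this (or an equivalent concentration--compactness argument) before any single-bubble expansion is legitimate. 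Second, the $C^1_{\mathrm{loc}}$ convergence of the $\varepsilon$-rescaling to a radial profile, which you use for conclusion (5), tacitly assumes both $\mathrm{diam}(A_\varepsilon)=\Theta(\varepsilon)$ (part of what is to be proved; the paper obtains the lower bound from Lemma \ref{bdd of core} via Gagliardo--Nirenberg and the upper bound from the refined capacity argument of Proposition \ref{better esti}) and a classification of entire solutions of $-\Delta U=U_+^p$ with compactly supported positive part. Neither is free, and the paper's proof of (5) needs neither.
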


We now give some comments of the proof of Theorem \ref{thm1}. The strategy is to consider the existence and limiting behavior of ground state solutions of a semilinear elliptic equation in divergence form, see \eqref{eq1} in section 2. First, using the critical point theory  the existence of mountain pass solutions $ u_\varepsilon $ of \eqref{eq1} with critical value $ c_\varepsilon $ of the corresponding variational functional is proved. Then by choosing proper test functions, we get the upper bound of $ c_\varepsilon $, from which we get the connectness of  the vortex core. Since $ K_H $ is not $ -\Delta $, one can not use the classical test functions to get accurate upper bounded of $ c_\varepsilon $. The boundedness of the energy  of the vortex  core is then obtained. Finally based on the  classical estimates of capacity (see, e.g., \cite{BF80,DV, SV}), we prove the lower bound of $ c_\varepsilon $ and the limiting location  of the core. The key of proof is to show that the concentration point of ground states $ u_\varepsilon $  is a minimum point of $ q^2\sqrt{det(K_H)} $. To this end, the optimal upper and lower bounds  of $ c_\varepsilon $ must be obtained.

\begin{remark}
The results of Theorem \ref{thm1} can be regarded as a general result  of the desingularization of classical planar vortex case (see \cite{LYY,SV}) and the vortex ring case (see  \cite{DV}). Note that the cases of planar vortices and  vortex rings correspond to the coefficient matrix $ K_H(x)=Id $ and $ \frac{1}{r}Id $, respectively.  In \cite{DV}, by considering   solutions of
\begin{equation*}
\begin{cases}
-\text{div}\left (\frac{1}{b}\nabla u\right )=\frac{1}{\varepsilon^2} b\left(u-q\ln\frac{1}{\varepsilon}\right)^{p-1}_+,\ \ &x\in \Omega,\\
u=0,\ \ &x\in\partial \Omega,
\end{cases}
\end{equation*}
where $ b $ is a scalar function and $ q $ is a positive function satisfying $ -\text{div}(\frac{1}{b}\nabla q)=0 $, the authors constructed a family of   $ C^1 $ solutions $ u_\varepsilon $  with nonvanishing circulation  concentrating near a minimizer of $ q^2/b $ as $ \varepsilon\to 0 $. Indeed, if we choose $ K_H(x)=\frac{1}{b}Id $ in Theorem \ref{thm1}, then solutions will concentrate near minimizers of $ q^2\sqrt{det(K_H)}=q^2/b $, which  coincides with the results in \cite{DV}.
\end{remark}

\begin{remark}
Indeed, the existence of solutions of general elliptic  equations in divergence form has been studied by \cite{PS}, who considered a singularly perturbed elliptic problem:
\begin{equation}\label{PS eq}
\begin{split}
-\varepsilon^2 \text{div}(K(x)\nabla u) +V(x)u=u^{p},\ \ \ \ x\in \mathbb{R}^n,
\end{split}
\end{equation}
where $ K(x) $ is strictly positive definite, $ n\geq 3 $, $ p\in (1,\frac{n+2}{n-2}) $ and $ V\in C^{1}(\mathbb{R}^n) $ is positive. The authors constructed solutions concentrating near minimizers of $ V(x)^{\frac{p+1}{p-1}}\sqrt{det(K(x))} $ by the penalization technique. However, it seems that the method in \cite{PS} can not be used in our situation since it depends on the positiveness of $ V $.

\end{remark}

\begin{remark}
Recently, \cite{CW} considered desingularization of rotational-invariant solutions of 3D incompressible Euler equation with helical symmetry in an infinite pipe. Using properties of Green's function of a general uniformly elliptic operator, \cite{CW} proved the existence of mountain pass solutions of Euler equation with helical symmetry, the associated vorticities of which are rotational-invariant and concentrate near a helix. While in this paper, instead of  using properties of  Green's function, we use the estimates of capacity to improve  estimates of the diameter of the vortex core and the energy of ground states in \cite{CW}.
\end{remark}

This paper is organized as follows.  In section 2, we introduce the associated variational structure and prove the existence of mountain pass solutions of \eqref{eq1} for every $ \varepsilon\in (0,1) $. Some fundamental properties which will be used in section 3 are also proved.  In section 3 we prove the   limiting behavior of $ u_\varepsilon $. The proof of  Theorem \ref{thm1} and Theorem \ref{thm01} will be  given in section 4.

\section{Variational problem}

We now consider the following equations
\begin{equation}\label{eq1}
\begin{cases}
\mathcal{L}_Hu=-\text{div}(K_H(x)\nabla u)=\frac{1}{\varepsilon^2} \left( u-q\ln\frac{1}{\varepsilon}\right) ^{p}_+,\ \ &x\in \Omega,\\
u=0,\ \ &x\in\partial \Omega,
\end{cases}
\end{equation}
where  $ p>1 $, $ \Omega\subseteq \mathbb{R}^2 $ is bounded, $ K_H(x)=\frac{1}{k^2+x_1^2+x_2^2}
\begin{pmatrix}
k^2+x_2^2 & -x_1x_2 \\
-x_1x_2 &  k^2+x_1^2
\end{pmatrix} $
and $ q $  is a function defined in $ \overline{\Omega} $ satisfying
\begin{enumerate}
	\item[(Q1).]  $ q \in C^2(\Omega)\cap C^1(\overline{\Omega}) $ and $ q>0 $ in $ \overline{\Omega}. $
	\item[(Q2).] $ q $ is a $ \mathcal{L}_H-harmonic $ function, i.e., $ \mathcal{L}_Hq = -\text{div}(K_H(x)\nabla q )= 0.$
\end{enumerate}

Let $ (K_H(x)\mathbf{a}|\mathbf{b})=\sum_{i,j=1}^2(K_H)_{i,j}(x)a_ib_j $ for two vectors $ \mathbf{a},\mathbf{b} $. Define 
$$ \mathcal{H}(\Omega)=\left\{u\in H^1_0(\Omega)\mid \int_{\Omega}(K_H(x)\nabla u|\nabla u)dx<+\infty \right\} $$
with the norm
\begin{equation*}
||u||_{\mathcal{H}(\Omega)}:=\left(\int_{\Omega}(K_H(x)\nabla u|\nabla u)dx\right) ^\frac{1}{2}.
\end{equation*}
Since $ K_H $ is a positive definite matrix with two positive eigenvalues $ \lambda_1=1$ and $ \lambda_2=\frac{k^2}{k^2+x_1^2+x_2^2} $, two norms $ ||\cdot||_{\mathcal{H}(\Omega)} $ and $ ||\cdot||_{H^1_0(\Omega)} $ are equivalent.

Define the associated energy functional of \eqref{eq1}
\begin{equation}\label{E1}
I_{\varepsilon}(u)=\frac{1}{2}\int_{\Omega}(K_H(x)\nabla u|\nabla u)dx -\frac{1}{(p+1)\varepsilon^2}\int_{\Omega}\left(u-q\ln\frac{1}{\varepsilon}\right)^{p+1}_+dx,\ \ \forall u\in \mathcal{H}(\Omega).
\end{equation}
By the definition of $ \mathcal{H} $,  $ I_{\varepsilon} $ is a well-defined $ C^1 $ functional on $ \mathcal{H} $.

Define the Nehari manifold
\begin{equation}\label{Neh}
\begin{split}
\mathcal{N}_\varepsilon=&\{u\in \mathcal{H}(\Omega)\setminus\{0\}\mid \langle I'_\varepsilon(u),u\rangle =0\}\\
=&\left\{u\in \mathcal{H}(\Omega)\setminus\{0\}\mid \int_{\Omega}(K_H(x)\nabla u|\nabla u)dx=\frac{1}{\varepsilon^2}\int_{\Omega}\left(u-q\ln\frac{1}{\varepsilon}\right)^{p}_+udx\right\}.
\end{split}
\end{equation}

\subsection{Existence of solutions}
First, using the classical critical point theory, we get  ground state solutions of \eqref{eq1}.

Since the nonlinearity $ f(t,x)=\frac{1}{\varepsilon^2}(\left( t-q(x)\ln\frac{1}{\varepsilon}\right) ^p_+ $ for $ p>1 $,  $ I_\varepsilon(u) $ has a mountain pass geometry. Thus we can define the  mountain pass value
\begin{equation*}
 c_\varepsilon=\inf\limits_{\gamma\in \mathcal{P}_\varepsilon}\max\limits_{t\in[0,1]}I_\varepsilon(\gamma(t)),
\end{equation*}
 where
\begin{equation*}
\mathcal{P}_\varepsilon=\{\gamma\in C([0,1], \mathcal{H}(\Omega))\mid \gamma(0)=0, I_\varepsilon(\gamma(1))<0\}.
\end{equation*}
Clearly, $ c_\varepsilon>0. $ We have the following characterization of $ \mathcal{N}_\varepsilon $ and the mountain pass value $ c_\varepsilon $, see \cite{CPY book, JT}.
\begin{lemma}[\cite{CPY book}, Theorem 1.3.7]\label{CPY book}
For any $ u\in \mathcal{N}_\varepsilon $, $ u_+\not\equiv 0. $	For any $ u\in \mathcal{H}(\Omega) $ with $ u_+\not\equiv 0 $, there exists a unique $ t(u)>0 $ such that $ t(u)u\in \mathcal{N}_\varepsilon. $ The value of $ t(u) $ is characterized by the identity
\begin{equation}\label{chara}
I_\varepsilon(t(u)u)=\max \{I_\varepsilon(tu), t>0\}.
\end{equation}
Moreover, there holds
\begin{equation*}
c_\varepsilon\leq\inf\limits_{w\not\equiv 0, w\in \mathcal{H}(\Omega)}\max\limits_{t\geq 0}I_\varepsilon(tw)=\inf\limits_{w\in \mathcal{N}_\varepsilon}I_\varepsilon(w).
\end{equation*}
Finally, if the mountain pass value $ c_\varepsilon $ is a critical value for $ I_\varepsilon $, then $ c_\varepsilon=\inf\limits_{w\in \mathcal{N}_\varepsilon}I_\varepsilon(w) $ is the least nontrivial critical value.
\end{lemma}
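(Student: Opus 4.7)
The plan is to reduce every claim to the behaviour of the single-variable slice $j_u(t) := I_\varepsilon(tu)$ for fixed $u \in \mathcal{H}(\Omega)$ with $u_+ \not\equiv 0$. Writing $\alpha(x) := q(x)\ln(1/\varepsilon)$, which is strictly positive on $\overline{\Omega}$ since $q>0$ and $\varepsilon\in(0,1)$, one computes
\begin{equation*}
j_u(t) = \tfrac{t^2}{2}\|u\|^2_{\mathcal{H}} - \frac{1}{(p+1)\varepsilon^2}\int_\Omega (tu-\alpha)^{p+1}_+\,dx, \qquad j'_u(t) = t\bigl[\|u\|^2_{\mathcal{H}} - \eta(t)\bigr],
\end{equation*}
where $\eta(t) := \frac{1}{\varepsilon^2 t}\int_\Omega (tu-\alpha)^p_+\,u\,dx$. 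Claim~(1) is immediate: for $u\in\mathcal{N}_\varepsilon$, $\|u\|^2_{\mathcal{H}}>0$ forces the right-hand side of the Nehari identity to be positive, which requires $u>\alpha>0$ on a set of positive measure, so $u_+\not\equiv 0$.

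For (2) and (3) I would analyse $\eta(t)$ pointwise. On $\{u(x)>0\}$, the quantity $r_x(t):=(tu(x)-\alpha(x))^p_+/t$ satisfies, where it is differentiable,
\begin{equation*}
r'_x(t) = \frac{(tu-\alpha)^{p-1}_+\bigl[(p-1)tu+\alpha\bigr]}{t^2}\ge 0,
\end{equation*}
with strict inequality once $tu(x)>\alpha(x)$. Multiplying by $u(x)$ and integrating shows that $\eta$ is continuous and nondecreasing on $(0,\infty)$, strictly increasing once it becomes positive; moreover $\eta(t)\to 0$ as $t\to 0^+$ (by dominated convergence, using $u^{p+1}\in L^1(\Omega)$ from the 2D Sobolev embedding $H^1_0\hookrightarrow L^{p+1}$) and $\eta(t)\to\infty$ as $t\to\infty$ (since $u_+\not\equiv 0$ and $p>1$). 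Hence the equation $\eta(t)=\|u\|^2_{\mathcal{H}}$ has a unique solution $t(u)\in(0,\infty)$, the sign analysis of $j'_u$ gives $j_u$ strictly increasing on $(0,t(u))$ and strictly decreasing afterwards, so $t(u)u$ is the unique positive multiple of $u$ lying in $\mathcal{N}_\varepsilon$ and $I_\varepsilon(t(u)u)=\max_{t>0}I_\varepsilon(tu)$.

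For (4), given any $w\in\mathcal{H}(\Omega)$ with $w_+\not\equiv 0$, I choose $T$ so large that $I_\varepsilon(Tw)<0$ (possible since $j_w(t)\to -\infty$) and take the straight path $\gamma(s)=sTw$, which lies in $\mathcal{P}_\varepsilon$ and gives $c_\varepsilon\le\max_{t\ge 0}I_\varepsilon(tw)=I_\varepsilon(t(w)w)$ by (3). The case $w_+\equiv 0$ contributes $+\infty$ to the outer infimum and can be ignored. Since the map $w\mapsto t(w)w$ sends $\{w:w_+\not\equiv 0\}$ onto $\mathcal{N}_\varepsilon$ (for $u\in\mathcal{N}_\varepsilon$ one has $t(u)=1$ by uniqueness in (3)), the two infima in the statement coincide, and the inequality $c_\varepsilon\le\inf_{\mathcal{N}_\varepsilon}I_\varepsilon$ follows. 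Claim~(5) is then automatic: any nontrivial critical point $u$ satisfies $\langle I'_\varepsilon(u),u\rangle=0$, so $u\in\mathcal{N}_\varepsilon$ and $I_\varepsilon(u)\ge\inf_{\mathcal{N}_\varepsilon}I_\varepsilon$; if $c_\varepsilon$ itself is attained at a critical point, combining with the inequality from (4) yields $c_\varepsilon=\inf_{\mathcal{N}_\varepsilon}I_\varepsilon$, and the same bound identifies $c_\varepsilon$ as the least nontrivial critical value.

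The hard part is the strict monotonicity of $\eta$. Because the nonlinearity is the shifted power $(u-q\ln(1/\varepsilon))^p_+$ rather than a pure power $u^p$, no rescaling of $t$ reduces $j_u$ to a polynomial in a single power of $t$, and the standard homogeneous ``divide by $t^{p+1}$'' argument for uniqueness on the Nehari fibre is unavailable. The pointwise derivative computation of $r'_x(t)$, which works precisely because $\alpha(x)>0$, is what replaces it and supplies both the existence and uniqueness of $t(u)$, and hence the variational characterisation of $c_\varepsilon$ that the rest of the paper rests upon.
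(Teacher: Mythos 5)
Your proof is correct and follows essentially the same route as the paper: the paper simply cites \cite{CPY book} for this lemma, and the standard fibering-map argument it relies on (analysing $j_u'(t)/t=\|u\|^2_{\mathcal H}-\eta(t)$ and showing $\eta$ is nondecreasing, strictly increasing where positive, with limits $0$ and $+\infty$) is exactly what you have written; a draft of this very computation appears, commented out, in the paper's source. Your observation that the positivity of $\alpha=q\ln\frac1\varepsilon$ is what rescues uniqueness for the non-homogeneous nonlinearity $(\,\cdot-\alpha)_+^p$ is the right key point, and the remaining claims (the path $\gamma(s)=sTw$, surjectivity of $w\mapsto t(w)w$ onto $\mathcal N_\varepsilon$, and the least-critical-value conclusion) are handled correctly.
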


Using the mountain pass theorem, we get the existence of mountain pass solutions of $ I_\varepsilon $ with  $ c_\varepsilon $.
\begin{proposition}\label{exist}
$ I_\varepsilon $ has a mountain pass solution with mountain pass value $ c_\varepsilon $. Namely, one can find $ u_\varepsilon\in \mathcal{H}(\Omega) $ satisfying
	\begin{equation}\label{201}
	I_\varepsilon(u_\varepsilon)=c_\varepsilon,\ \ I'_\varepsilon(u_\varepsilon)=0.
	\end{equation}
As a consequence,  there holds $ c_\varepsilon=\inf\limits_{w\not\equiv 0, w\in \mathcal{H}(\Omega)}\max\limits_{t\geq 0}I_\varepsilon(tw)=\inf\limits_{w\in \mathcal{N}_\varepsilon}I_\varepsilon(w). $
\end{proposition}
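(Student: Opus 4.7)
The plan is to apply the classical mountain pass theorem of Ambrosetti-Rabinowitz to $I_\varepsilon$ on the Hilbert space $\mathcal{H}(\Omega)$, whose norm is equivalent to the standard $H_0^1$ norm by property $(\mathcal{K}2)$. I need to verify the mountain pass geometry and the Palais-Smale (PS) condition; the stated characterization of $c_\varepsilon$ then follows directly from Lemma \ref{CPY book}.

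For the geometry, $I_\varepsilon(0)=0$ is immediate. Since $\varepsilon<1$ and $q>0$ on $\overline\Omega$, one has $q\ln(1/\varepsilon)>0$, hence $(u-q\ln(1/\varepsilon))_+\le |u|$ pointwise; combined with the Sobolev embedding $H_0^1(\Omega)\hookrightarrow L^{p+1}(\Omega)$ (valid in two dimensions for every finite exponent) this gives
$$I_\varepsilon(u)\ge \tfrac12\|u\|_{\mathcal{H}(\Omega)}^2-\tfrac{C}{(p+1)\varepsilon^2}\|u\|_{\mathcal{H}(\Omega)}^{p+1},$$
which is bounded below by some $\alpha>0$ on a small sphere $\|u\|_{\mathcal{H}(\Omega)}=\rho$ because $p>1$. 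For the far endpoint, any fixed $\phi\in\mathcal{H}(\Omega)$ with $\phi>q\ln(1/\varepsilon)$ on a set of positive measure yields $I_\varepsilon(t\phi)\to-\infty$ as $t\to\infty$ by the $(p+1)$-growth of the nonlinearity, so $c_\varepsilon$ is well defined and strictly positive.

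For PS at any level, take $\{u_n\}\subset\mathcal{H}(\Omega)$ with $I_\varepsilon(u_n)\to c$ and $I_\varepsilon'(u_n)\to 0$. The key identity
$$I_\varepsilon(u_n)-\tfrac{1}{p+1}\langle I_\varepsilon'(u_n),u_n\rangle=\Bigl(\tfrac12-\tfrac{1}{p+1}\Bigr)\|u_n\|_{\mathcal{H}(\Omega)}^2+\tfrac{\ln(1/\varepsilon)}{(p+1)\varepsilon^2}\int_\Omega q\bigl(u_n-q\ln\tfrac{1}{\varepsilon}\bigr)_+^{p}dx,$$
obtained by observing that $u-(u-q\ln(1/\varepsilon))_+$ equals $q\ln(1/\varepsilon)$ on $\{u>q\ln(1/\varepsilon)\}$ and $0$ elsewhere, has a manifestly non-negative right-hand side, so $\|u_n\|_{\mathcal{H}(\Omega)}$ is bounded. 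Up to a subsequence, $u_n\rightharpoonup u$ in $\mathcal{H}(\Omega)$ and $u_n\to u$ in every $L^s(\Omega)$, $s<\infty$, by Rellich-Kondrachov. Testing $\langle I_\varepsilon'(u_n),u_n-u\rangle\to 0$ and using $L^{(p+1)/p}$-convergence of the shifted power via dominated convergence forces $\int_\Omega(K_H(x)\nabla u_n\,|\,\nabla(u_n-u))\,dx\to 0$, which together with weak convergence yields $\|u_n-u\|_{\mathcal{H}(\Omega)}\to 0$.

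With mountain pass geometry and PS secured, the mountain pass theorem produces $u_\varepsilon\in\mathcal{H}(\Omega)$ satisfying \eqref{201}. The identity $c_\varepsilon=\inf_{w\in\mathcal{N}_\varepsilon}I_\varepsilon(w)=\inf_{w\neq 0}\max_{t\ge 0}I_\varepsilon(tw)$ then follows immediately from Lemma \ref{CPY book}, since $c_\varepsilon$ has been realized as a critical value. The delicate point is that the nonlinearity is not of pure Ambrosetti-Rabinowitz form because of the shift $q\ln(1/\varepsilon)$ inside the positive part; the telescoping computation above circumvents this by exploiting the positivity of that shift, which is precisely where the hypothesis $\varepsilon<1$ enters.
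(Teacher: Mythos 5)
Your proposal is correct and follows the same route as the paper, which simply invokes standard mountain pass theory (citing Willem) plus Lemma \ref{CPY book}; you have merely written out the routine verifications (geometry, boundedness of PS sequences via the identity that underlies the paper's Lemma \ref{le01}, and compactness via Rellich). The only cosmetic slip is the phrase ``$u-(u-q\ln\frac{1}{\varepsilon})_+$ equals \dots $0$ elsewhere'' --- off the set $\{u>q\ln\frac{1}{\varepsilon}\}$ that difference equals $u$, not $0$, but the product with $(u-q\ln\frac{1}{\varepsilon})_+^p$ vanishes there, so your identity and the ensuing argument are unaffected.
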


\begin{proof}

By standard mountain pass theory (see, e.g.,  \S 1.4 in \cite{Wi}), we can prove that there exists  $ u_\varepsilon $ such that $ I'_\varepsilon(u_\varepsilon)=0 $ and $   I_\varepsilon(u_\varepsilon)=c_\varepsilon $. So by Lemma \ref{CPY book}, $  c_\varepsilon=\inf\limits_{w\in \mathcal{N}_\varepsilon}I_\varepsilon(w). $

\end{proof}

\subsection{Basic properties}
First, we  give some basic properties of $ I_\varepsilon $ and the operator $ \mathcal{L}_H $ as follows.
\begin{lemma}\label{le01}
	For any $ u\in \mathcal{H}(\Omega) $,
	\begin{equation}\label{01}
	\left( \frac{1}{2}-\frac{1}{p+1}\right) ||u||_{\mathcal{H}(\Omega)}^2\leq I_\varepsilon(u)-\frac{1}{p+1}\langle I'_\varepsilon(u),u\rangle.
	\end{equation}
\end{lemma}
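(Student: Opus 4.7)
The plan is to compute the right-hand side $I_\varepsilon(u) - \frac{1}{p+1}\langle I'_\varepsilon(u), u\rangle$ directly from the definitions and show that after collecting the quadratic term $\left(\frac{1}{2} - \frac{1}{p+1}\right)\|u\|_{\mathcal{H}(\Omega)}^2$, the leftover nonlinear piece is nonnegative. The key algebraic identity to exploit is that on the support of the nonlinearity, the pointwise difference $u - \bigl(u - q\ln\tfrac{1}{\varepsilon}\bigr)_+$ equals exactly $q\ln\tfrac{1}{\varepsilon}$, which is positive under the assumptions $q>0$ (from (Q1)) and $0<\varepsilon<1$.

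More precisely, I would first substitute the explicit formulas
\begin{equation*}
I_\varepsilon(u) = \tfrac{1}{2}\|u\|_{\mathcal{H}(\Omega)}^2 - \tfrac{1}{(p+1)\varepsilon^2}\int_\Omega \bigl(u - q\ln\tfrac{1}{\varepsilon}\bigr)_+^{p+1}\,dx
\end{equation*}
and
\begin{equation*}
\langle I'_\varepsilon(u), u\rangle = \|u\|_{\mathcal{H}(\Omega)}^2 - \tfrac{1}{\varepsilon^2}\int_\Omega \bigl(u - q\ln\tfrac{1}{\varepsilon}\bigr)_+^{p} u\,dx,
\end{equation*}
and compute
\begin{equation*}
I_\varepsilon(u) - \tfrac{1}{p+1}\langle I'_\varepsilon(u), u\rangle = \Bigl(\tfrac{1}{2} - \tfrac{1}{p+1}\Bigr)\|u\|_{\mathcal{H}(\Omega)}^2 + \tfrac{1}{(p+1)\varepsilon^2}\int_\Omega \bigl(u - q\ln\tfrac{1}{\varepsilon}\bigr)_+^{p}\bigl[u - \bigl(u - q\ln\tfrac{1}{\varepsilon}\bigr)_+\bigr]\,dx.
\end{equation*}

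Next I would split the integrand pointwise. On the set $\{u - q\ln\tfrac{1}{\varepsilon} \leq 0\}$ the factor $\bigl(u - q\ln\tfrac{1}{\varepsilon}\bigr)_+^{p}$ vanishes, while on $\{u - q\ln\tfrac{1}{\varepsilon} > 0\}$ the bracket equals $u - (u - q\ln\tfrac{1}{\varepsilon}) = q\ln\tfrac{1}{\varepsilon}$. Consequently the integral reduces to
\begin{equation*}
\tfrac{\ln(1/\varepsilon)}{(p+1)\varepsilon^2}\int_\Omega q(x)\bigl(u - q\ln\tfrac{1}{\varepsilon}\bigr)_+^{p}\,dx,
\end{equation*}
which is nonnegative since $q>0$ in $\overline{\Omega}$ by (Q1) and $\ln(1/\varepsilon)>0$ under the standing assumption $\varepsilon\in(0,1)$. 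Dropping this nonnegative remainder yields \eqref{01}.

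There is really no substantial obstacle here; this is a short identity/inequality in the spirit of a Nehari-type bookkeeping lemma. The only thing to be mindful of is that the proof uses $\varepsilon<1$ (so that $\ln(1/\varepsilon)\geq 0$) together with the positivity of $q$, and that the manipulation is valid for every $u\in\mathcal{H}(\Omega)$ without needing $u\in\mathcal{N}_\varepsilon$ or any sign condition on $u$, because the truncation $(\cdot)_+$ takes care of the integrability and sign issues automatically.
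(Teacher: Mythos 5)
Your proof is correct and follows essentially the same route as the paper: both compute $I_\varepsilon(u)-\frac{1}{p+1}\langle I'_\varepsilon(u),u\rangle$ directly from the definitions and discard the nonnegative leftover term $\frac{1}{(p+1)\varepsilon^2}\int_\Omega \bigl(u-q\ln\frac{1}{\varepsilon}\bigr)_+^{p}\,q\ln\frac{1}{\varepsilon}\,dx$. Your write-up is in fact slightly more careful than the paper's, since it makes explicit where the positivity of $q$ and the standing assumption $\varepsilon<1$ are used.
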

\begin{proof}
It follows from the definition of $ I_\varepsilon $ and $ I_\varepsilon' $ that
\begin{equation*}
\begin{split}
&I_\varepsilon(u)-\frac{1}{p+1}\langle I'_\varepsilon(u),u\rangle\\
=&\frac{1}{2}||u||_{\mathcal{H}(\Omega)}^2 -\frac{1}{(p+1)\varepsilon^2}\int_{\Omega}\left(u-q\ln\frac{1}{\varepsilon}\right)^{p+1}_+dx-\left( \frac{1}{p+1}||u||_{\mathcal{H}(\Omega)}^2
-\frac{1}{(p+1)\varepsilon^2}\int_{\Omega}\left(u-q\ln\frac{1}{\varepsilon}\right)^{p}_+udx\right) \\
\geq&\left( \frac{1}{2}-\frac{1}{p+1}\right) ||u||_{\mathcal{H}(\Omega)}^2.
\end{split}
\end{equation*}
\end{proof}

\begin{lemma}\label{le02}
	For any $ u\in \mathcal{H}(\Omega) $,
	\begin{equation}\label{02}
	\int_{\Omega}(K_H(x)\nabla u|\nabla u)dx=\int_{\Omega}q^2\left(K_H(x)\nabla \left( \frac{u}{q}\right) |\nabla \left( \frac{u}{q}\right) \right)dx.
	\end{equation}
\end{lemma}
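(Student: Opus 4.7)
The plan is to substitute $v = u/q$ and reduce the identity to a statement about the $\mathcal{L}_H$-harmonic function $q$. Since $q \in C^1(\overline\Omega)$ with $q > 0$ on $\overline\Omega$, the function $1/q$ lies in $C^1(\overline\Omega)$ and is bounded, so $v := u/q \in H_0^1(\Omega)$ (recall $u = 0$ on $\partial\Omega$), with $\nabla u = v\,\nabla q + q\,\nabla v$ almost everywhere.

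Expanding the quadratic form by bilinearity of $(K_H\,\cdot\,|\,\cdot\,)$ gives
\begin{equation*}
(K_H\nabla u|\nabla u) \;=\; v^{2}(K_H\nabla q|\nabla q) \;+\; 2qv\,(K_H\nabla q|\nabla v) \;+\; q^{2}(K_H\nabla v|\nabla v).
\end{equation*}
Hence it suffices to show that the integral of the first two terms vanishes, i.e.\
\begin{equation*}
\int_{\Omega}\bigl[v^{2}(K_H\nabla q|\nabla q) + 2qv\,(K_H\nabla q|\nabla v)\bigr]\,dx = 0.
\end{equation*}

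The key observation is that the integrand above is exactly $(K_H\nabla q\,|\,\nabla(qv^{2}))$, since $\nabla(qv^{2}) = v^{2}\nabla q + 2qv\,\nabla v$. Now I would integrate by parts: because $qv^{2} = u^{2}/q \in H_0^1(\Omega)$ (its trace on $\partial\Omega$ vanishes as $u|_{\partial\Omega}=0$ and $q$ is bounded below by a positive constant there) and $K_H\nabla q$ is a $C^1$ vector field, the divergence theorem yields
\begin{equation*}
\int_{\Omega}(K_H\nabla q\,|\,\nabla(qv^{2}))\,dx \;=\; -\int_{\Omega} qv^{2}\,\mathrm{div}(K_H\nabla q)\,dx \;+\; \int_{\partial\Omega} qv^{2}\,(K_H\nabla q\cdot\nu)\,d\sigma.
\end{equation*}
The boundary term vanishes by the trace condition on $u$, and the bulk term vanishes by hypothesis (Q2), which says $-\mathrm{div}(K_H\nabla q) = \mathcal{L}_H q = 0$ in $\Omega$. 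Substituting back gives the claimed identity.

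The only delicate point is the integration-by-parts step, since we are only assuming $u\in H_0^1(\Omega)$ rather than smoothness. I would handle this by a routine density argument: approximate $u$ in $H_0^1(\Omega)$ by a sequence $u_{n}\in C_c^{\infty}(\Omega)$, apply the computation above to $u_n$ (where everything is classical), and pass to the limit using the $L^\infty$ bounds on $q$, $1/q$, $\nabla q$ and the entries of $K_H$. No other obstacle arises, so the identity follows.
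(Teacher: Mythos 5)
Your proof is correct and follows essentially the same route as the paper: your identity $(K_H\nabla u|\nabla u)-q^2(K_H\nabla(u/q)|\nabla(u/q))=(K_H\nabla q|\nabla(u^2/q))$ is exactly the paper's pointwise identity (2.6), which the paper verifies entry-by-entry while you obtain it more cleanly by bilinearity and symmetry of $K_H$, and both arguments then conclude by testing $\mathcal{L}_Hq=0$ against $u^2/q$. Your added care about the integration by parts (density in place of assuming $u^2/q\in H^1_0$) is a welcome refinement of a point the paper glosses over, but it does not change the substance of the argument.
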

\begin{proof}
We first claim that
\begin{equation}\label{2-1}
\left(K_H(x)\nabla q| \nabla\left( \frac{u^2}{q}\right) \right)=\left(K_H(x)\nabla u|\nabla u\right)- q^2\left(K_H(x)\nabla \left( \frac{u}{q}\right) |\nabla \left( \frac{u}{q}\right) \right).
\end{equation}
Indeed, we have
\begin{equation*}
(K_{H})_{11}(\partial_1 u)^2-q^2(K_{H})_{11}\left( \partial_1 \left( \frac{u}{q}\right) \right) ^2=(K_{H})_{11}\left( \frac{2u}{q}\partial_1 u\partial_1 q-\frac{u^2}{q^2}(\partial_1 q)^2\right) =(K_{H})_{11}\partial_1 q \partial_1 \left( \frac{u^2}{q}\right),
\end{equation*}
\begin{equation*}
\begin{split}
&((K_{H})_{12}+(K_{H})_{21})\partial_1 u\partial_2 u-q^2((K_{H})_{12}+(K_{H})_{21})\partial_1 \left( \frac{u}{q}\right) \partial_2 \left( \frac{u}{q}\right) \\
=&2(K_{H})_{12}\frac{u}{q}(\partial_1 q\partial_2 u+\partial_2 q\partial_1 u)-2(K_{H})_{12}\frac{u^2}{q^2}\partial_1 q\partial_2 q\\
=&(K_{H})_{12}\partial_1 q \partial_2 \left( \frac{u^2}{q}\right) +(K_{H})_{21}\partial_2 q \partial_1 \left( \frac{u^2}{q}\right),
\end{split}
\end{equation*}
and
\begin{equation*}
(K_{H})_{22}(\partial_2 u)^2-q^2(K_{H})_{22}\left( \partial_2 \left( \frac{u}{q}\right) \right) ^2=(K_{H})_{22}\left( \frac{2u}{q}\partial_2 u\partial_2 q-\frac{u^2}{q^2}(\partial_2 q)^2\right) =(K_{H})_{22}\partial_2 q \partial_2 \left( \frac{u^2}{q}\right).
\end{equation*}
Adding up the above inequalities, we get \eqref{2-1}.

Since $ \mathcal{L}_Hq=0 $ and $ \frac{u^2}{q}\in H^1_0(\Omega) $, we have $ \int_\Omega \left(K_H(x)\nabla q| \nabla\left( \frac{u^2}{q}\right) \right)dx=0 $. Integrating both sides of \eqref{2-1} over $ \Omega $, we get \eqref{02}.

\end{proof}

For any $ \bar{x}=(|\bar{x}|\cos\theta_{\bar{x}}, |\bar{x}|\sin\theta_{\bar{x}})\in \mathbb{R}^2 $, denote $ \bar{R}_{\bar{x}} =\begin{pmatrix}
\cos\theta_{\bar{x}} & \sin\theta_{\bar{x}} \\
-\sin\theta_{\bar{x}} &\cos\theta_{\bar{x}}
\end{pmatrix}$ the rotational transformation matrix through $ \theta_{\bar{x}} $. Then we have
\begin{lemma}\label{le03}
	For any $ \bar{x}\in \mathbb{R}^2 $, there holds
	\begin{equation*}
	K_H(\bar{R}_{\bar{x}} y)=\bar{R}_{\bar{x}}K_H(y)\bar{R}_{\bar{x}}^t,\ \ \ \ \forall y\in \mathbb{R}^2.
	\end{equation*}
	
\end{lemma}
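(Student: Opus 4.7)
The plan is to recognize that $K_H(x)$ admits a coordinate-free representation that behaves nicely under rotations, and then the identity will drop out immediately.

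First I would rewrite the matrix $K_H$ in a form that exposes its rotational structure. A direct calculation shows that the two rank-one matrices $xx^t$ and $x^\perp (x^\perp)^t$, where $x^\perp=(x_2,-x_1)^t$, satisfy $xx^t + x^\perp(x^\perp)^t = |x|^2 I$. Using this, the explicit formula \eqref{coef matrix} can be rewritten as
\begin{equation*}
K_H(x) \;=\; \frac{1}{k^2+|x|^2}\bigl(k^2 I + x^\perp(x^\perp)^t\bigr)\;=\;I - \frac{x x^t}{k^2+|x|^2}.
\end{equation*}
This reformulation is the crux of the argument; all the coordinate dependence is now encoded in the scalar $|x|^2$ and in the outer product $xx^t$.

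Next I would exploit the two basic properties of any rotation $\bar{R}_{\bar{x}}$: it is orthogonal, so $\bar{R}_{\bar{x}} \bar{R}_{\bar{x}}^t = I$, and it preserves the Euclidean norm, so $|\bar{R}_{\bar{x}} y|=|y|$. Together with the transformation rule $(\bar{R}_{\bar{x}} y)(\bar{R}_{\bar{x}} y)^t = \bar{R}_{\bar{x}}\, yy^t\, \bar{R}_{\bar{x}}^t$ for outer products, these give
\begin{equation*}
K_H(\bar{R}_{\bar{x}} y) \;=\; I - \frac{(\bar{R}_{\bar{x}} y)(\bar{R}_{\bar{x}} y)^t}{k^2+|\bar{R}_{\bar{x}} y|^2} \;=\; \bar{R}_{\bar{x}} I \bar{R}_{\bar{x}}^t - \bar{R}_{\bar{x}} \frac{yy^t}{k^2+|y|^2}\bar{R}_{\bar{x}}^t \;=\; \bar{R}_{\bar{x}}\,K_H(y)\,\bar{R}_{\bar{x}}^t,
\end{equation*}
which is the desired identity.

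There is no real obstacle here; the only nontrivial step is spotting the intrinsic form $K_H(x)=I-xx^t/(k^2+|x|^2)$. Once that is in hand, the proof is a one-line computation. An alternative (more tedious) approach would be to plug $\bar{R}_{\bar{x}} y$ into the $2\times 2$ entries of \eqref{coef matrix} and expand both sides with trigonometric identities; I would avoid this since it obscures the geometric content, which is simply that $K_H$ is built out of rotation-invariant scalar data and the outer product $xx^t$, both of which transform correctly under conjugation by $\bar{R}_{\bar{x}}$.
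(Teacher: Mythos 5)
Your proof is correct, but it takes a genuinely different route from the paper's. The paper proves the identity by brute force: it substitutes $\bar{R}_{\bar{x}}y=(y_1\cos\theta_{\bar{x}}+y_2\sin\theta_{\bar{x}},\,y_2\cos\theta_{\bar{x}}-y_1\sin\theta_{\bar{x}})^t$ into the explicit $2\times 2$ entries of \eqref{coef matrix}, uses $|\bar{R}_{\bar{x}}y|=|y|$, and then recognizes the resulting matrix as the triple product $\bar{R}_{\bar{x}}K_H(y)\bar{R}_{\bar{x}}^t$ — precisely the entrywise computation you chose to avoid. Your argument instead isolates the intrinsic representation
\begin{equation*}
K_H(x)=I-\frac{xx^t}{k^2+|x|^2},
\end{equation*}
which is easily verified against \eqref{coef matrix} (indeed $k^2I+x^\perp(x^\perp)^t=(k^2+|x|^2)I-xx^t$ since $xx^t+x^\perp(x^\perp)^t=|x|^2I$), and then the equivariance follows in one line from $|\bar{R}_{\bar{x}}y|=|y|$, $(\bar{R}_{\bar{x}}y)(\bar{R}_{\bar{x}}y)^t=\bar{R}_{\bar{x}}\,yy^t\,\bar{R}_{\bar{x}}^t$, and $\bar{R}_{\bar{x}}\bar{R}_{\bar{x}}^t=I$. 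What your version buys is transparency and generality: it shows the identity holds for \emph{any} orthogonal matrix (not just rotations) and in any dimension, because $K_H$ is assembled from the rotation-invariant scalar $|x|^2$ and the outer product $xx^t$. What the paper's version buys is that it is entirely self-contained at the level of the given entries and requires no reformulation. Both proofs are complete and correct.
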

\begin{proof}
Note that for any $ y=(y_1,y_2)^t, $ $ \bar{R}_{\bar{x}} y=(y_1\cos\theta_{\bar{x}}+y_2\sin\theta_{\bar{x}}, y_2\cos\theta_{\bar{x}}-y_1\sin\theta_{\bar{x}})^t $ and $ |\bar{R}_{\bar{x}} y|=|y| $. By the definition of $ K_H $, we get
\begin{equation*}
\begin{split}
&K_H(\bar{R}_{\bar{x}} y)\\
=&\frac{1}{k^2+|y|^2}
\begin{pmatrix}
k^2+(y_2\cos\theta_{\bar{x}}-y_1\sin\theta_{\bar{x}})^2 & -(y_1\cos\theta_{\bar{x}}+y_2\sin\theta_{\bar{x}})(y_2\cos\theta_{\bar{x}}-y_1\sin\theta_{\bar{x}}) \\
-(y_1\cos\theta_{\bar{x}}+y_2\sin\theta_{\bar{x}})(y_2\cos\theta_{\bar{x}}-y_1\sin\theta_{\bar{x}}) &k^2+(y_1\cos\theta_{\bar{x}}+y_2\sin\theta_{\bar{x}})^2
\end{pmatrix}\\
=&\frac{1}{k^2+|y|^2}\begin{pmatrix}
\cos\theta_{\bar{x}} & \sin\theta_{\bar{x}} \\
-\sin\theta_{\bar{x}} &\cos\theta_{\bar{x}}
\end{pmatrix}
\begin{pmatrix}
k^2+y_2^2 & -y_1y_2 \\
-y_1y_2 &k^2+y_1^2
\end{pmatrix}
\begin{pmatrix}
\cos\theta_{\bar{x}} & -\sin\theta_{\bar{x}} \\
\sin\theta_{\bar{x}} &\cos\theta_{\bar{x}}
\end{pmatrix}\\
=&\bar{R}_{\bar{x}}K_H(y)\bar{R}_{\bar{x}}^t.
\end{split}
\end{equation*}
\end{proof}
A direct consequence of Lemma \ref{le03} is the rotational invariance of the problem \eqref{eq1}, which will be used in the proof of Theorem \ref{thm1} in section 4. Define $ \Omega_{\bar{x}}:=\{\bar{R}_{\bar{x}}x\mid x\in \Omega\} $ the  region of $ \Omega $ rotated clockwise by $ \theta_{\bar{x}} $. For a function $ u\in \mathcal{H}(\Omega) $, we define $$ u_{\bar{x}}(x):=u(\bar{R}_{-\bar{x}}x) \ \ \ \ \forall x\in\Omega_{\bar{x}}. $$
So $ u_{\bar{x}}\in \mathcal{H}(\Omega_{\bar{x}}) $. Define $ q_{\bar{x}}(x):=q(\bar{R}_{-\bar{x}}x)$ for any  $x\in\Omega_{\bar{x}}. $ Then we get
\begin{lemma}\label{le04}[Rotational invariance]
	$ u $ is a solution of \eqref{eq1} if and only if $ u_{\bar{x}}\in \mathcal{H}(\Omega_{\bar{x}}) $ satisfies
	\begin{equation}\label{eq1-rot}
	-\text{div}(K_H(x)\nabla u_{\bar{x}})=\frac{1}{\varepsilon^2} \left( u_{\bar{x}}-q_{\bar{x}}\ln\frac{1}{\varepsilon}\right) ^{p}_+,\ \ \ \ \text{in}\ \Omega_{\bar{x}}.
	\end{equation}
	
\end{lemma}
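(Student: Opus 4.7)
The plan is to reduce the PDE \eqref{eq1-rot} at a point $x\in\Omega_{\bar x}$ to the PDE \eqref{eq1} at the rotated point $y=\bar R_{-\bar x}x\in\Omega$, using Lemma \ref{le03} to handle the coefficient matrix and a chain-rule computation to handle the divergence. The key observation is that $\bar R_{\bar x}$ is orthogonal, so $\bar R_{-\bar x}=\bar R_{\bar x}^{-1}=\bar R_{\bar x}^{t}$, and the map $x\mapsto y=\bar R_{-\bar x}x$ is a smooth diffeomorphism from $\Omega_{\bar x}$ onto $\Omega$ with unit Jacobian, mapping $\partial\Omega_{\bar x}$ to $\partial\Omega$. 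This already takes care of the boundary condition $u_{\bar x}|_{\partial\Omega_{\bar x}}=0\iff u|_{\partial\Omega}=0$, and by change of variables also gives $u_{\bar x}\in H^1_0(\Omega_{\bar x})$ and $\int_{\Omega_{\bar x}}(K_H\nabla u_{\bar x}|\nabla u_{\bar x})\,dx<\infty$ as soon as the analogous quantities for $u$ are finite (once we have verified Step 2 below).

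First, I would compute $\nabla u_{\bar x}(x)$ by the chain rule. Writing $y=\bar R_{-\bar x}x$ and differentiating $u_{\bar x}(x)=u(\bar R_{-\bar x}x)$, one gets $\nabla u_{\bar x}(x)=(\bar R_{-\bar x})^{t}\nabla u(y)=\bar R_{\bar x}\nabla u(y)$. Second, I would invoke Lemma \ref{le03} at the point $y$: since $x=\bar R_{\bar x}y$,
\begin{equation*}
K_H(x)\nabla u_{\bar x}(x)=\bar R_{\bar x}\,K_H(y)\,\bar R_{\bar x}^{t}\cdot\bar R_{\bar x}\nabla u(y)=\bar R_{\bar x}\bigl(K_H(y)\nabla u(y)\bigr),
\end{equation*}
where orthogonality $\bar R_{\bar x}^{t}\bar R_{\bar x}=\mathrm{Id}$ is used to collapse the middle factors.

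Third, I would compute the divergence in $x$. The general identity I rely on is that for any constant matrix $M\in O(2)$ and any smooth vector field $F$ on the image of $M^{-1}$,
\begin{equation*}
\mathrm{div}_x\bigl(MF(M^{-1}x)\bigr)=(\mathrm{div}\,F)(M^{-1}x),
\end{equation*}
which is immediate from $\partial_{x_i}\bigl[(MF(M^{-1}x))_i\bigr]=M_{ij}(\partial_\ell F_j)(M^{-1}x)(M^{-1})_{\ell i}$ and $\sum_i M_{ij}(M^{-1})_{\ell i}=\delta_{j\ell}$. Applying this with $M=\bar R_{\bar x}$ and $F(y)=K_H(y)\nabla u(y)$ yields
\begin{equation*}
-\mathrm{div}_x\bigl(K_H(x)\nabla u_{\bar x}(x)\bigr)=\bigl(\mathcal L_Hu\bigr)(y),\qquad y=\bar R_{-\bar x}x.
\end{equation*}
On the right-hand side of \eqref{eq1-rot}, by the definitions of $u_{\bar x}$ and $q_{\bar x}$,
\begin{equation*}
\tfrac{1}{\varepsilon^{2}}\bigl(u_{\bar x}(x)-q_{\bar x}(x)\ln\tfrac{1}{\varepsilon}\bigr)_+^{p}=\tfrac{1}{\varepsilon^{2}}\bigl(u(y)-q(y)\ln\tfrac{1}{\varepsilon}\bigr)_+^{p}.
\end{equation*}
Hence \eqref{eq1-rot} holds at every $x\in\Omega_{\bar x}$ if and only if \eqref{eq1} holds at the corresponding $y\in\Omega$, and the two directions of the claimed equivalence follow.

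There is no serious obstacle; the main thing to be careful about is notational consistency with the sign conventions in Lemma \ref{le03} (which matrix sits on the left and which sits on the right after the similarity $\bar R_{\bar x}K_H(y)\bar R_{\bar x}^{t}$) and the chain-rule formula for $\nabla u_{\bar x}$, so that the transposes match up and the orthogonality of $\bar R_{\bar x}$ can be used to cancel cleanly. Everything else, including the boundary trace and the membership in $\mathcal H(\Omega_{\bar x})$, is routine change of variables.
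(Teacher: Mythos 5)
Your proposal is correct and follows essentially the same route as the paper's proof: the chain rule $\nabla u_{\bar x}(x)=\bar R_{\bar x}\nabla u(y)$, the conjugation identity of Lemma \ref{le03} to collapse $K_H(x)\nabla u_{\bar x}(x)$ into $\bar R_{\bar x}(K_H(y)\nabla u(y))$, and a divergence transformation rule for constant-matrix pushforwards (your $\mathrm{div}_x(MF(M^{-1}x))=(\mathrm{div}\,F)(M^{-1}x)$ is the same identity the paper states as $\nabla_x\cdot\mathbf F(\bar R_{-\bar x}x)=\nabla_y\cdot(\bar R_{-\bar x}\mathbf F)(y)$). The extra remarks on the boundary trace and membership in $\mathcal H(\Omega_{\bar x})$ are a harmless addition the paper leaves implicit.
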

\begin{proof}
For any $ x\in\Omega_{\bar{x}}, $ let $ y=\bar{R}_{-\bar{x}}x $. It is not hard to check that for any function $ g $ and vector field $ \textbf{F}=(f_1,f_2)^t $, there holds
\begin{equation*}
\nabla_x g(\bar{R}_{-\bar{x}}x)=\bar{R}_{\bar{x}} \nabla_y g(y),
\end{equation*}
and
\begin{equation*}
\nabla_x\cdot \textbf{F}(\bar{R}_{-\bar{x}}x)=\nabla_y\cdot(\bar{R}_{-\bar{x}} \textbf{F} ) (y).
\end{equation*}
So by using Lemma \ref{le03}, we get
\begin{equation*}
\begin{split}
-\text{div}(K_H(x)\nabla u_{\bar{x}}(x))=&-\nabla_x\cdot(K_H(x)\nabla_x u(\bar{R}_{-\bar{x}}x))\\
=&-\nabla_x\cdot(K_H(x)(\bar{R}_{\bar{x}}\nabla_y u)(\bar{R}_{-\bar{x}}x))\\
=&-\nabla_x\cdot(\bar{R}_{\bar{x}}K_H(\bar{R}_{-\bar{x}}x)\bar{R}_{\bar{x}}^t(\bar{R}_{\bar{x}}\nabla_y u)(\bar{R}_{-\bar{x}}x))\\
=&-\nabla_y\cdot(\bar{R}_{-\bar{x}}\bar{R}_{\bar{x}}K_H(y)\nabla_y u(y))\\
=&-\nabla_y\cdot(K_H(y)\nabla_y u)(y)\\
=&\frac{1}{\varepsilon^2}\left(u(y)-q(y)\ln\frac{1}{\varepsilon}\right)^p_+\\
=&\frac{1}{\varepsilon^2}\left(u_{\bar{x}}(x)-q_{\bar{x}}(x)\ln\frac{1}{\varepsilon}\right)^{p}_+.
\end{split}
\end{equation*}

\end{proof}

\section{Asymptotic behavior of   $ u_\varepsilon $}

Now, we give the asymptotic behavior of mountain pass solutions $ u_\varepsilon $ of \eqref{eq1}. We first consider the asymptotic behavior of $ u_\varepsilon $ under an extra assumption of $ q $ and $ K_H $:
\begin{enumerate}
	\item[$ (A_1). $] There exist minimum points of $ q^2\sqrt{det(K_H)} $ over $ \overline{\Omega} $ which is on the $ x_1 $-axis.
\end{enumerate}
Indeed, this additional assumption is not essential. Under this assumption, it is convenient to  give an optimal upper bound of $ c_\varepsilon $, see Proposition \ref{le301-upbdd}. The proof of asymptotic behavior of   $ u_\varepsilon $ without the assumption $ (A_1)  $ will be given in section 4.

 Let $ x^*\in \overline{\Omega}\cap\{x=(x_1,x_2)\mid x_2=0\} $ be such that  $$ q^2(x^*)\sqrt{det(K_H(x^*))}=\min\limits_{x\in\overline{\Omega} }q^2(x)\sqrt{det(K_H(x))}. $$ Let $ 0<\varepsilon<1. $
\subsection{Upper bound of $ c_\varepsilon $ }
First, we  compute the upper bound of $ c_\varepsilon $. By choosing proper competitors, we can get the following upper bound of $ c_\varepsilon $.
\begin{proposition}\label{le301-upbdd}
There holds
\begin{equation*}
\limsup_{\varepsilon\to 0}\frac{c_\varepsilon}{\ln\frac{1}{\varepsilon}}\leq \pi q^2\sqrt{det(K_H)}(x^*)=\pi \min\limits_{x\in\overline{\Omega} }q^2\sqrt{det(K_H)}(x).
\end{equation*}
Moreover, if $ x^*\in \Omega $, then
\begin{equation*}
c_\varepsilon\leq \pi \min\limits_{x\in\overline{\Omega} }q^2\sqrt{det(K_H)}(x)\ln\frac{1}{\varepsilon}+O(1).
\end{equation*}
\end{proposition}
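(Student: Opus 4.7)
The plan is to bound $c_\varepsilon$ from above via the variational characterisation $c_\varepsilon\le\inf_{w\in\mathcal{H}(\Omega),\,w_+\not\equiv 0}\max_{t\ge 0}I_\varepsilon(tw)$ supplied by Lemma \ref{CPY book} and Proposition \ref{exist}, by exhibiting a single test function $w_\varepsilon$ concentrated near $x^*$ whose maximum energy along the ray has the prescribed asymptotics. Assumption $(A_1)$ places $x^*=(|x^*|,0)$ on the $x_1$-axis, which diagonalises $K_H(x^*)=\mathrm{diag}\bigl(k^2/(k^2+|x^*|^2),\,1\bigr)$; I set $M:=K_H(x^*)^{1/2}$ (also diagonal, with $\det M=\sqrt{\det K_H(x^*)}$) and work with the $M$-adapted ``ellipsoidal balls'' $E_\rho:=\{x\in\mathbb{R}^2:|M^{-1}(x-x^*)|<\rho\}$. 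This anisotropic scaling is precisely what reduces the limiting frozen-coefficient operator $-\mathrm{div}(K_H(x^*)\nabla\cdot)$ to the Laplacian, so that the fundamental-solution profile is logarithmic in the $y=M^{-1}(x-x^*)$ coordinate.

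First, treat the interior case $x^*\in\Omega$. Fix $r>0$ so small that $\overline{E_r}\subset\Omega$, fix $s>0$, set $a_\varepsilon:=q(x^*)\ln\frac{1}{\varepsilon}+1$, and define the truncated logarithmic test function
\begin{equation*}
w_\varepsilon(x)=\begin{cases}a_\varepsilon,&x\in E_{s\varepsilon},\\[2pt] a_\varepsilon\,\dfrac{\ln(r/|M^{-1}(x-x^*)|)}{\ln(r/(s\varepsilon))},&x\in E_r\setminus E_{s\varepsilon},\\[2pt] 0,&x\in\Omega\setminus E_r.\end{cases}
\end{equation*}
Then $w_\varepsilon\in\mathcal{H}(\Omega)$, and by continuity of $q$ the inequality $w_\varepsilon-q\ln\frac{1}{\varepsilon}\ge\tfrac12$ holds on $E_{s\varepsilon}$ for $\varepsilon$ small, so the nonlinearity is activated precisely on the plateau. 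Using the identity $(K_H(x^*)\nabla\ln|M^{-1}(x-x^*)|\,|\,\nabla\ln|M^{-1}(x-x^*)|)=|M^{-1}(x-x^*)|^{-2}$ and changing variables $y=M^{-1}(x-x^*)$ with $dx=\sqrt{\det K_H(x^*)}\,dy$, one computes
\begin{equation*}
\|w_\varepsilon\|_{\mathcal{H}(\Omega)}^{2}=\frac{2\pi\,a_\varepsilon^{2}\sqrt{\det K_H(x^*)}}{\ln(r/(s\varepsilon))}+O(1)=2\pi q(x^*)^{2}\sqrt{\det K_H(x^*)}\,\ln\tfrac{1}{\varepsilon}+O(1),
\end{equation*}
where the smoothness in $(\mathcal{K}1)$ makes the frozen-coefficient error $K_H(x)-K_H(x^*)=O(|x-x^*|)$ contribute only $O(r)=O(1)$ after integration against the $O(1/|y|)$ remainder in the annulus.

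Next, compute $\max_{t\ge 0}I_\varepsilon(tw_\varepsilon)$ by locating the Nehari scale $t_\varepsilon$ of Lemma \ref{CPY book}. Equating $t_\varepsilon\|w_\varepsilon\|_{\mathcal{H}}^{2}=\varepsilon^{-2}\int(t_\varepsilon w_\varepsilon-q\ln\tfrac{1}{\varepsilon})_+^{p}w_\varepsilon\,dx$: the LHS is of order $\ln\frac{1}{\varepsilon}$, while on $E_{s\varepsilon}$ (area $\sim\varepsilon^{2}$) the integrand is $\bigl((t_\varepsilon-1)q(x^*)\ln\frac{1}{\varepsilon}+O(1)\bigr)_+^{p}\cdot a_\varepsilon$, giving an RHS of order $(t_\varepsilon-1)^{p}\ln^{p+1}\frac{1}{\varepsilon}$. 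Balancing forces $t_\varepsilon=1+O(\ln^{-1}\frac{1}{\varepsilon})$; discarding the non-negative nonlinear piece then yields
\begin{equation*}
\max_{t\ge 0}I_\varepsilon(tw_\varepsilon)\le\frac{t_\varepsilon^{2}}{2}\|w_\varepsilon\|_{\mathcal{H}(\Omega)}^{2}=\pi q(x^*)^{2}\sqrt{\det K_H(x^*)}\,\ln\tfrac{1}{\varepsilon}+O(1),
\end{equation*}
which is the sharp second estimate. If instead $x^*\in\partial\Omega$, I pick $x_\delta\in\Omega\cap\{x_2=0\}$ with $x_\delta\to x^*$, repeat the construction about $x_\delta$ with a $\delta$-dependent radius $r_\delta$ (the $O(1)$ remainder is now $\delta$-dependent, but $\varepsilon$-independent); dividing by $\ln\frac{1}{\varepsilon}$ kills that remainder, and the continuity of $q^{2}\sqrt{\det K_H}$ at $x^*$ produces $\limsup_{\varepsilon\to 0}c_\varepsilon/\ln\frac{1}{\varepsilon}\le\pi q(x_\delta)^{2}\sqrt{\det K_H(x_\delta)}$, whence the stated $\limsup$ follows on letting $\delta\to 0$.

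The main difficulty is the bookkeeping in the energy estimate: obtaining a genuine $O(1)$ remainder rather than $o(\ln\frac{1}{\varepsilon})$ requires simultaneously (i) controlling the frozen-coefficient error through the Lipschitz bound on $K_H$ against the explicit gradient of the logarithmic profile, (ii) absorbing the correction $\ln(r/(s\varepsilon))=\ln\frac{1}{\varepsilon}+O(1)$ from the normalising denominator into the leading coefficient, and (iii) verifying that $t_\varepsilon^{2}=1+O(\ln^{-1}\frac{1}{\varepsilon})$ is close enough to $1$ that it does not spoil the constant $\pi q(x^*)^{2}\sqrt{\det K_H(x^*)}$. The diagonal structure of $K_H(x^*)$ guaranteed by $(A_1)$ is what keeps the ellipsoidal reduction transparent, and is the reason this assumption is imposed in Section 3 before being removed in Section 4.
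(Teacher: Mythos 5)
Your proposal is correct and reaches the same asymptotics, including the sharp $O(1)$ remainder in the interior case, but it organizes the computation differently enough from the paper to be worth contrasting. The paper's test function is $v^\tau_\varepsilon=q(x)\bigl(\hat U\bigl(\frac{x-\bar x}{\varepsilon}\bigr)+\ln\frac{\tau}{\varepsilon}\bigr)\hat\varphi_\delta(x-\bar x)$: it carries the full factor $q(x)$, so that Lemma \ref{le02} (which uses $\mathcal{L}_Hq=0$) converts the quadratic form into $\int q^2(K_H\nabla(v/q)|\nabla(v/q))$ and the whole energy computation is performed on the explicit anisotropic profile $\hat U$, with the ellipse ratio $l_2/l_1=\sqrt{(K_H)_{22}/(K_H)_{11}}(\bar x)$ playing exactly the role of your matrix $M=K_H(x^*)^{1/2}$; membership in $\mathcal{N}_\varepsilon$ is then achieved by tuning the plateau-height parameter $\tau$ via an intermediate-value argument ($g_\varepsilon(\tau_1)<0<g_\varepsilon(\tau_2)$), keeping the ray parameter $t=1$. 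You instead take the constant-height truncated-logarithm (capacity) profile, absorb the variation of $q$ and of $K_H$ as $O(\varepsilon\ln\frac1\varepsilon)$ and $O(r)$ errors (note this uses $q\in C^1$ and the Lipschitz bound on $K_H$, not mere continuity, but both are available from (Q1) and ($\mathcal{K}$1)), and normalize on the ray by estimating the Nehari scale $t_\varepsilon=1+O(\ln^{-1}\frac1\varepsilon)$ before discarding the nonpositive nonlinear term. What your route buys is that Lemma \ref{le02} and the harmonicity of $q$ are not needed at all for the upper bound, and the identity $(K_H(x^*)\nabla\ln|M^{-1}(x-x^*)|\,|\,\nabla\ln|M^{-1}(x-x^*)|)=|M^{-1}(x-x^*)|^{-2}$ makes the appearance of $\sqrt{\det K_H(x^*)}$ completely transparent; what the paper's route buys is that the same family $v^{\tau_\varepsilon}_\varepsilon\in\mathcal{N}_\varepsilon$ is reused verbatim to evaluate $I_\varepsilon$ exactly (both the quadratic and the nonlinear term), which is convenient later. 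One shared caveat, not a defect of your argument relative to the paper's: in the boundary case both proofs approximate $x^*$ by interior points of $\Omega$ lying on the $x_1$-axis, which tacitly assumes $x^*\in\overline{\Omega\cap\{x_2=0\}}$.
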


\begin{proof}
Let $ U(x) $ be a $ C^\infty $ radially symmetric function such that
\begin{equation*}
\begin{cases}
U(x)\geq 0, \ \ \ \ \ \ &x\in B_1(0),\\
U(x)=\ln\frac{1}{|x|}, \ \ &x\in B_1(0)^c.
\end{cases}
\end{equation*}
For any $ \bar{x}\in \Omega\cap\{x_2=0\} $, we choose $\delta>0$ sufficiently small and a truncation $ \varphi_\delta\in C_c^{\infty}(B_{2\delta}(0)) $ such that
\begin{equation*}
0\leq \phi_\delta\leq 1\ \ \text{in}\ B_{2\delta}(0);\ \ \ \ \phi_\delta= 1\ \ \text{in}\ B_{\delta}(0).
\end{equation*}
For any constants $ l_1,l_2>0 $ (which will be determined later), define
\begin{equation*}
\hat{U}(x_1,x_2)=U\left( \frac{x_1}{l_1}, \frac{x_2}{l_2}\right).
\end{equation*}
So the support set of $ \hat{U}_+ $ is the ellipse $ \left\{(x_1,x_2)\mid \frac{x_1^2}{l_1^2}+\frac{x_2^2}{l_2^2}\leq 1\right\} $. For any set $ A $, define $ \hat{A}=\left\{(x_1,x_2)\mid \left( \frac{x_1}{l_1}, \frac{x_2}{l_2}\right) \in A\right\} $. Let $ \hat{\varphi_\delta}(x_1,x_2)=\varphi_\delta\left( \frac{x_1}{l_1}, \frac{x_2}{l_2}\right). $ Then $ supp(\hat{\varphi_\delta})=\hat{B_{2\delta}} $ and $ \hat{\varphi_\delta}\equiv 1 $ on $ \hat{B_{\delta}} $.

We define for any $ \tau>0 $ a test function
\begin{equation*}
\begin{split}
v^\tau_\varepsilon=q(x)\left( \hat{U}\left( \frac{x-\bar{x}}{\varepsilon}\right) +\ln\frac{\tau}{\varepsilon}\right) \hat{\varphi_\delta}(x-\bar{x})
=\left[q\ln\frac{1}{\varepsilon}+q\left( \hat{U}\left( \frac{x-\bar{x}}{\varepsilon}\right) +\ln\tau\right) \right]\hat{\varphi_\delta}(x-\bar{x}).
\end{split}
\end{equation*}
Then $ v^\tau_\varepsilon\in H^1_0(\Omega) $. Define
\begin{equation*}
\begin{split}
g_\varepsilon(\tau):=\frac{1}{\ln\frac{1}{\varepsilon}}\langle I_\varepsilon'(v^\tau_\varepsilon), v^\tau_\varepsilon\rangle
=\frac{1}{\ln\frac{1}{\varepsilon}}\left(\int_{\Omega}(K_H(x)\nabla v^\tau_\varepsilon| \nabla v^\tau_\varepsilon) dx-\frac{1}{\varepsilon^2}\int_\Omega\left(v^\tau_\varepsilon-q\ln\frac{1}{\varepsilon}\right)^p_+v^\tau_\varepsilon dx\right).
\end{split}
\end{equation*}

We now prove that there exists $ \tau_\varepsilon>0 $ such that $ g_\varepsilon(\tau_\varepsilon)=0 $, that is, $ v^{\tau_\varepsilon}_\varepsilon\in \mathcal{N}_\varepsilon. $ By Lemma \ref{le02},
\begin{equation*}
\begin{split}
&\int_{\Omega}(K_H(x)\nabla v^\tau_\varepsilon| \nabla v^\tau_\varepsilon) dx=\int_{\Omega}q^2\left( K_H(x)\nabla \left( \frac{v^\tau_\varepsilon}{q}\right) | \nabla \left( \frac{v^\tau_\varepsilon}{q}\right) \right)  dx\\
=&\int_{\hat{B}_{2\delta}(\bar{x})}q^2\left( K_H(x)\nabla \left( \frac{v^\tau_\varepsilon}{q}\right) | \nabla \left( \frac{v^\tau_\varepsilon}{q}\right) \right)  dx\\
=&\left({\int_{\hat{B}_{2\delta}(\bar{x})/\hat{B}_{\delta}(\bar{x})}+\int_{\hat{B}_{\delta}(\bar{x})/\hat{B}_{\varepsilon}(\bar{x})}
+\int_{\hat{B}_{\varepsilon}(\bar{x})}}\right)q^2\left( K_H(x)\nabla \left( \frac{v^\tau_\varepsilon}{q}\right) | \nabla \left( \frac{v^\tau_\varepsilon}{q}\right) \right)  dx\\
=&A_1+A_2+A_3.
\end{split}
\end{equation*}
By the definition of $ v^\tau_\varepsilon $, we have
\begin{equation}\label{3-1}
A_1\leq C\left( 1+|\ln\frac{\tau}{\delta}|\right),
\end{equation}
where $ C $ is independent of $ \tau. $ Since $ \varepsilon $ sufficiently small, we can assume that $ \varepsilon<\delta $, which implies that
\begin{equation}\label{3-2}
\begin{split}
A_3=&\int_{\hat{B}_{\varepsilon}(\bar{x})}q^2\left( K_H(x)\nabla \left( \frac{v^\tau_\varepsilon}{q}\right) | \nabla \left( \frac{v^\tau_\varepsilon}{q}\right) \right)  dx=\int_{\hat{B}_{\varepsilon}(\bar{x})}q^2\left( K_H(x)\nabla \hat{U}\left( \frac{x-\bar{x}}{\varepsilon}\right) | \nabla \hat{U}\left( \frac{x-\bar{x}}{\varepsilon}\right) \right)  dx\\
=&\int_{\hat{B}_1(0)}q^2(\varepsilon y+\bar{x})(K_H(\varepsilon y+\bar{x})\nabla \hat{U}(y)| \nabla \hat{U}(y))dy\\
\rightrightarrows & \int_{\hat{B}_1(0)}q^2(\bar{x})(K_H(\bar{x})\nabla \hat{U}(y)| \nabla \hat{U}(y))dy\,\,\,\,\text{as}\,\, \varepsilon\to 0.
\end{split}
\end{equation}
The convergence is uniformly about $ \tau. $

As for $ A_2 $, we have
\begin{equation*}
\begin{split}
A_2=&\int_{\hat{B}_{\delta}(\bar{x})\setminus\hat{B}_{\varepsilon}(\bar{x})}q^2\left( K_H(x)\nabla \left( \frac{v^\tau_\varepsilon}{q}\right) | \nabla \left( \frac{v^\tau_\varepsilon}{q}\right) \right)  dx\\
=&\int_{\hat{B}_{\delta}(0)\setminus\hat{B}_{\varepsilon}(0)}q^2(x+\bar{x})\left( K_H(x+\bar{x})\nabla \hat{U}\left( \frac{x}{\varepsilon}\right) | \nabla \hat{U}\left( \frac{x}{\varepsilon}\right) \right)  dx.
\end{split}
\end{equation*}
Note that for any vector $ a=(a_1,a_2) $,
\begin{equation*}
(K_H(x)a|a)=(K_{H})_{11}(x)a_1^2+((K_{H})_{12}+(K_{H})_{21})(x)a_1a_2+(K_{H})_{22}(x)a_2^2.
\end{equation*}
Hence we have
\begin{equation*}
\begin{split}
A_2=&\int_{\hat{B}_{\delta}(0)\setminus\hat{B}_{\varepsilon}(0)}q^2(K_{H})_{11}(x+\bar{x})\partial_1 \hat{U}\left( \frac{x}{\varepsilon}\right) \partial_1\hat{U}\left( \frac{x}{\varepsilon}\right)   dx\\
&+\int_{\hat{B}_{\delta}(0)\setminus\hat{B}_{\varepsilon}(0)}q^2((K_{H})_{12}+(K_{H})_{21})(x+\bar{x})\partial_1 \hat{U}\left( \frac{x}{\varepsilon}\right) \partial_2\hat{U}\left( \frac{x}{\varepsilon}\right) dx\\
&+\int_{\hat{B}_{\delta}(0)\setminus\hat{B}_{\varepsilon}(0)}q^2(K_{H})_{22}(x+\bar{x})\partial_2 \hat{U}\left( \frac{x}{\varepsilon}\right) \partial_2\hat{U}\left( \frac{x}{\varepsilon}\right)dx.
\end{split}
\end{equation*}
Note that $ \nabla \hat{U}(x_1,x_2)=\left(-\frac{1}{l_1^2}\frac{x_1}{\frac{x_1^2}{l_1^2}+\frac{x_2^2}{l^2_2}}, -\frac{1}{l_2^2}\frac{x_2}{\frac{x_1^2}{l_1^2}+\frac{x_2^2}{l^2_2}}\right)$ on $ \hat{B}_1(0)^c $. Hence direct calculation yields
\begin{equation}\label{3-3}
\begin{split}
&\int_{\hat{B}_{\delta}(0)\setminus\hat{B}_{\varepsilon}(0)}q^2(K_{H})_{11}(x+\bar{x})\partial_1 \hat{U}\left( \frac{x}{\varepsilon}\right) \partial_1\hat{U}\left( \frac{x}{\varepsilon}\right)  dx\\
=&\int_{\hat{B}_{\frac{\delta}{\varepsilon}}(0)\setminus\hat{B}_{1}(0)}q^2(K_{H})_{11}(\varepsilon x+\bar{x})\partial_1 \hat{U}(x)\partial_1\hat{U}(x) dx\\
=&\frac{1}{l_1^4}\int_{\hat{B}_{\frac{\delta}{\varepsilon}}(0)\setminus\hat{B}_{1}(0)}q^2(K_{H})_{11}(\varepsilon x+\bar{x})\frac{x_1^2}{\left( \frac{x_1^2}{l_1^2}+\frac{x_2^2}{l^2_2}\right) ^2}dx\\
=&\frac{l_2}{l_1}\int_{B_{\frac{\delta}{\varepsilon}}(0)\setminus B_{1}(0)}q^2(K_{H})_{11}(\varepsilon (l_1x_1,l_2x_2)+\bar{x})\frac{x_1^2}{(x_1^2+x_2^2)^2}dx\\
=&\frac{l_2}{l_1}\int_{B_{\frac{\delta}{\varepsilon}}(0)\setminus B_{1}(0)}(q^2(K_{H})_{11}(\bar{x})+\varepsilon\nabla (q^2(K_{H})_{11})(\bar{x})\cdot (l_1x_1,l_2x_2)+O(\varepsilon^2|x|^2))\frac{x_1^2}{(x_1^2+x_2^2)^2}dx\\
=&\frac{l_2}{l_1}\pi q^2(K_{H})_{11}(\bar{x})\ln\frac{\delta}{\varepsilon}+O(1),
\end{split}
\end{equation}
where we used $ K_H, q\in C^2 $ and Taylor expansion.

Similarly, we can get
\begin{equation}\label{3-4}
\int_{\hat{B}_{\delta}(0)\setminus\hat{B}_{\varepsilon}(0)}q^2(K_{H})_{22}(x+\bar{x})\partial_2 \hat{U}\left( \frac{x}{\varepsilon}\right) \partial_2\hat{U}\left( \frac{x}{\varepsilon}\right)  dx=\frac{l_1}{l_2}\pi q^2(K_{H})_{22}(\bar{x})\ln\frac{\delta}{\varepsilon}+O(1).
\end{equation}
Since $ \bar{x} $ is on the $ x_1 $ axis, we get $ (K_{H})_{12}(\bar{x})=(K_{H})_{21}(\bar{x})=0 $, which implies that
\begin{equation}\label{3-5}
\begin{split}
&\int_{\hat{B}_{\delta}(0)\setminus\hat{B}_{\varepsilon}(0)}q^2((K_{H})_{12}+(K_{H})_{21})(x+\bar{x})\partial_1 \hat{U}\left( \frac{x}{\varepsilon}\right) \partial_2\hat{U}\left( \frac{x}{\varepsilon}\right)  dx\\
=&\int_{B_{\frac{\delta}{\varepsilon}}(0)\setminus B_{1}(0)}q^2((K_{H})_{12}+(K_{H})_{21})(\bar{x})\frac{x_1^2}{(x_1^2+x_2^2)^2}dx+O(1)\\
=&O(1).
\end{split}
\end{equation}
So by \eqref{3-3}, \eqref{3-4} and \eqref{3-5} we have
\begin{equation}\label{3-6}
A_2=\frac{l_2}{l_1}\pi q^2(K_{H})_{11}(\bar{x})\ln\frac{1}{\varepsilon}+\frac{l_1}{l_2}\pi q^2(K_{H})_{22}(\bar{x})\ln\frac{1}{\varepsilon}+O(1),
\end{equation}
where $ O(1) $ is some bounded quantity independent of $ \tau. $

Choosing $ \frac{l_2}{l_1}=\sqrt{\frac{(K_{H})_{22}}{(K_{H})_{11}}(\bar{x})} $ and by \eqref{3-1}, \eqref{3-2} and \eqref{3-6}, we get
\begin{equation}\label{3-7}
\begin{split}
\int_{\Omega}(K_H(x)\nabla v^\tau_\varepsilon| \nabla v^\tau_\varepsilon) dx=&2\pi q^2\sqrt{(K_{H})_{11}(K_{H})_{22}}(\bar{x})\ln\frac{1}{\varepsilon}+O(1)+C|\ln\tau|\\
=&2\pi q^2\sqrt{det(K_{H})}(\bar{x})\ln\frac{1}{\varepsilon}+O(1)+C|\ln\tau|,
\end{split}
\end{equation}
from which we deduce that
\begin{equation}\label{3-8}
\lim_{\varepsilon\to 0}\frac{1}{\ln\frac{1}{\varepsilon}}\int_{\Omega}(K_H(x)\nabla v^\tau_\varepsilon| \nabla v^\tau_\varepsilon) dx=2\pi q^2\sqrt{det(K_{H})}(\bar{x})
\end{equation}
uniformly in any compact set of $ \tau>0. $

On the other hand, note that
\begin{equation*}
\frac{1}{\varepsilon^2}\int_\Omega\left( v^\tau_\varepsilon-q\ln\frac{1}{\varepsilon}\right)^p_+v^\tau_\varepsilon dx=\frac{1}{\varepsilon^2}\int_\Omega\left( v^\tau_\varepsilon-q\ln\frac{1}{\varepsilon}\right) ^{p+1}_+ dx+\frac{1}{\varepsilon^2}\int_\Omega\left( v^\tau_\varepsilon-q\ln\frac{1}{\varepsilon}\right) ^p_+q\ln\frac{1}{\varepsilon} dx.
\end{equation*}
By the definition of $ v^\tau_\varepsilon $, for  $ \varepsilon $ sufficiently small and every $ x\in \Omega $, we have
\begin{equation*}
\left( v^\tau_\varepsilon(x)-q(x)\ln\frac{1}{\varepsilon}\right) _+=q(x)\left( \hat{U}\left( \frac{x-\bar{x}}{\varepsilon}\right) +\ln\tau\right)_+.
\end{equation*}
Hence we get
\begin{equation}\label{3-9}
\begin{split}
&\frac{1}{\varepsilon^2}\int_\Omega\left( v^\tau_\varepsilon-q\ln\frac{1}{\varepsilon}\right) ^{p+1}_+ dx=\int_{\hat{B}_\tau(0)}q^{p+1}(\bar{x}+\varepsilon y)(\hat{U}(y)+\ln\tau)^{p+1}_+dy\\
\rightrightarrows&q^{p+1}(\bar{x})\int_{\hat{B}_\tau(0)}(\hat{U}(y)+\ln\tau)^{p+1}_+dy,
\end{split}
\end{equation}
and
\begin{equation}\label{3-10}
\begin{split}
\frac{1}{\varepsilon^2\ln\frac{1}{\varepsilon}}\int_\Omega\left( v^\tau_\varepsilon-q\ln\frac{1}{\varepsilon}\right) ^{p}_+q\ln\frac{1}{\varepsilon} dx\rightrightarrows q^{p+1}(\bar{x})\int_{\hat{B}_\tau(0)}(\hat{U}(y)+\ln\tau)^{p}_+dy.
\end{split}
\end{equation}
The convergences are uniformly in any compact set of $ \tau $. By \eqref{3-9} and \eqref{3-10}, we get
\begin{equation}\label{3-11}
\lim_{\varepsilon\to 0}\frac{1}{\varepsilon^2\ln\frac{1}{\varepsilon}}\int_\Omega\left( v^\tau_\varepsilon-q\ln\frac{1}{\varepsilon}\right) ^p_+v^\tau_\varepsilon dx=q^{p+1}(\bar{x})\int_{\hat{B}_\tau(0)}(\hat{U}(y)+\ln\tau)^{p}_+dy.
\end{equation}

It follows from \eqref{3-8} and \eqref{3-11} that for any $ \tau>0 $, $ \lim_{\varepsilon\to 0}g_\varepsilon(\tau)=g(\tau) $, where $ g $ is defined by $ g(\tau)=2\pi q^2\sqrt{det(K_{H})}(\bar{x})-q^{p+1}(\bar{x})\int_{\hat{B}_\tau(0)}(\hat{U}(y)+\ln\tau)^{p}_+dy $, and the convergence is uniformly in any compact set of $ \tau $. Now it is not hard to prove that there exist two numbers $ \tau_1, \tau_2>0 $ such that $ g(\tau_1)<0<g(\tau_2) $. So for $ \varepsilon $ sufficiently small, we have $ g_\varepsilon(\tau_1)<0<g_\varepsilon(\tau_2) $, from which we deduce that, there exists $ \tau_\varepsilon\in (\tau_1,\tau_2) $ satisfying $ g_\varepsilon(\tau_\varepsilon)=0. $ Then $ v^{\tau_\varepsilon}_\varepsilon\in \mathcal{N}_\varepsilon. $

Now by \eqref{3-8}, \eqref{3-11} and $ \tau_\varepsilon\in (\tau_1,\tau_2) $,  we can compute that
\begin{equation*}
\begin{split}
\lim_{\varepsilon\to 0}\frac{1}{\ln\frac{1}{\varepsilon}}I_\varepsilon(v^{\tau_\varepsilon}_\varepsilon)=&\lim_{\varepsilon\to 0}\frac{1}{2\ln\frac{1}{\varepsilon}}\int_{\Omega}(K_H(x)\nabla v^{\tau_\varepsilon}_\varepsilon| \nabla v^{\tau_\varepsilon}_\varepsilon) dx-\lim_{\varepsilon\to 0}\frac{1}{(p+1)\varepsilon^2\ln\frac{1}{\varepsilon}}\int_\Omega\left( v^{\tau_\varepsilon}_\varepsilon-q\ln\frac{1}{\varepsilon}\right) ^{p+1}_+dx\\
=&\pi q^2\sqrt{det(K_{H})}(\bar{x}).
\end{split}
\end{equation*}
Taking the infimum over $ \bar{x}\in\Omega\cap\{x_2=0\} $ and using the assumption that $ x^* $ is on the $ x_1 $ axis, we get $ \limsup\limits_{\varepsilon\to 0}\frac{c_\varepsilon}{\ln\frac{1}{\varepsilon}}\leq \pi q^2(x^*)\sqrt{det(K_H(x^*))}=\pi\min\limits_{x\in\overline{\Omega} }q^2\sqrt{det(K_H)}(x). $

If $ x^*\in \Omega $, we can improve the above estimate as follows. Indeed, choosing $ \bar{x}=x^* $, and using \eqref{3-7} and \eqref{3-9} again, we can get
\begin{equation*}
\begin{split}
c_\varepsilon\leq I_\varepsilon(v^{\tau_\varepsilon}_\varepsilon)=&\frac{1}{2}\int_{\Omega}(K_H(x)\nabla v^{\tau_\varepsilon}_\varepsilon| \nabla v^{\tau_\varepsilon}_\varepsilon) dx-\frac{1}{(p+1)\varepsilon^2}\int_\Omega\left( v^{\tau_\varepsilon}_\varepsilon-q\ln\frac{1}{\varepsilon}\right) ^{p+1}_+dx\\
=&\pi q^2\sqrt{det(K_{H})}(x^*)\ln\frac{1}{\varepsilon}+O(1),
\end{split}
\end{equation*}
where we used the fact that $ \tau_\varepsilon\in (\tau_1,\tau_2) $ and the limit in \eqref{3-9} is uniform on compact sets of $ \tau. $ The proof is thus complete.

\end{proof}

\subsection{Diameter and connectness of the vortex core}
We now prove the connectness and estimate the diameter of the vortex core.
To this end, we define $ A_\varepsilon $ the vortex core of solution $ u_\varepsilon $, that is,
\begin{equation*}
A_\varepsilon=\left\{x\in \Omega\mid u_\varepsilon(x)>q(x)\ln\frac{1}{\varepsilon}\right\}.
\end{equation*}
Clearly by the classical regularity theory of elliptic equations, $ u_\varepsilon\in C^{2,\alpha}(\Omega) $ for any $ \alpha\in(0,1) $ and $ A_\varepsilon $ is an open subset of $ \Omega. $

Define $ \text{diam}(A_\varepsilon)=\max\limits_{x,y\in supp(A_\varepsilon)}|x-y| $. We can prove the connectness and estimate the diameter of $ A_\varepsilon $ as follows.
\begin{proposition}\label{connect}
For every $ \varepsilon>0 $ sufficiently small, $ A_\varepsilon $ is connected and simply connected. Moreover
\begin{equation*}
\lim_{\varepsilon\to 0}\frac{diam(A_\varepsilon)}{dist(A_\varepsilon, \partial \Omega)}=0.
\end{equation*}
As a consequence, $ diam(A_\varepsilon) $ tends to 0 as $ \varepsilon\to 0. $
\end{proposition}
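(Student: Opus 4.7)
The plan is to combine the a priori bound on $u_\varepsilon$ coming from Proposition \ref{le301-upbdd} with classical 2D capacity estimates (as in \cite{BF80, DV, SV}) to control the geometry of $A_\varepsilon$, and to rule out multiple components of $A_\varepsilon$ by a Nehari-splitting argument.

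First I would establish the a priori bounds. Since $u_\varepsilon \in \mathcal{N}_\varepsilon$ with $I_\varepsilon(u_\varepsilon) = c_\varepsilon$, Lemma \ref{le01} gives $\|u_\varepsilon\|_{\mathcal{H}(\Omega)}^2 \leq \tfrac{2(p+1)}{p-1}\,c_\varepsilon \leq C\ln(1/\varepsilon)$, and by uniform ellipticity (property ($\mathcal{K}$2)) also $\|\nabla u_\varepsilon\|_{L^2}^2 \leq C\ln(1/\varepsilon)$. Letting $q_0:=\min_{\overline{\Omega}}q>0$ and $v_\varepsilon:=\min\{1,\,u_\varepsilon^+/(q_0\ln(1/\varepsilon))\}$, one has $v_\varepsilon\in H_0^1(\Omega)$, $v_\varepsilon\equiv 1$ on $A_\varepsilon$, and $\int_\Omega|\nabla v_\varepsilon|^2\,dx\leq C/\ln(1/\varepsilon)$, so the Newtonian capacity of $A_\varepsilon$ in $\Omega$ is $O(1/\ln(1/\varepsilon))$.

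For the diameter-to-distance estimate, fix any connected component $A^i\subset A_\varepsilon$ of diameter $d_i$ and distance $D_i$ to $\partial\Omega$. Since $A^i$ is connected in $\mathbb{R}^2$, the classical 2D lower capacity bound (via P\'olya--Szeg\H{o} symmetrization to a segment) applied in a ball of radius comparable to $D_i$ around a point $x_i\in A^i$, together with a cutoff of $v_\varepsilon$ transferring the global upper bound to this ball, yields
\[
\frac{c}{\log(CD_i/d_i)} \leq \mathrm{cap}(A^i,\Omega) \leq \frac{C}{\ln(1/\varepsilon)}.
\]
Hence $\log(D_i/d_i)\geq c'\ln(1/\varepsilon)$, so $d_i/D_i\leq \varepsilon^{c'}\to 0$. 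Taking the maximum over components gives $\mathrm{diam}(A_\varepsilon)/\mathrm{dist}(A_\varepsilon,\partial\Omega)\to 0$ and hence also $\mathrm{diam}(A_\varepsilon)\to 0$.

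To prove connectedness, suppose $A_\varepsilon$ has $N\geq 2$ components $A^1,\dots,A^N$. Writing $\psi_\varepsilon:=(u_\varepsilon-q\ln(1/\varepsilon))_+=\sum_i\psi_\varepsilon\chi_{A^i}$, set $u^i:=u_\varepsilon-\sum_{j\neq i}\psi_\varepsilon\chi_{A^j}$, so $(u^i-q\ln(1/\varepsilon))_+$ is supported only in $A^i$. By Lemma \ref{CPY book}, one can choose $t_i>0$ with $t_iu^i\in\mathcal{N}_\varepsilon$, and Proposition \ref{exist} then gives $I_\varepsilon(t_iu^i)\geq c_\varepsilon$. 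Mimicking the anisotropic truncated test-function computation of Proposition \ref{le301-upbdd} on each shrinking component $A^i$ (around a point $x_i\in A^i$) shows that the local contribution of $A^i$ to $I_\varepsilon(u_\varepsilon)$ is at least $\pi q^2\sqrt{\det K_H}(x^*)\ln(1/\varepsilon)(1+o(1))$, using that $x^*$ minimizes $q^2\sqrt{\det K_H}$. Summing over the $N$ components contradicts the upper bound $c_\varepsilon\leq \pi q^2\sqrt{\det K_H}(x^*)\ln(1/\varepsilon)(1+o(1))$ from Proposition \ref{le301-upbdd}.

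Finally, for simple connectedness: if $\Omega\setminus\overline{A_\varepsilon}$ had a bounded component $H$ with $\overline{H}\subset\Omega$, then $\tilde u:=u_\varepsilon-q\ln(1/\varepsilon)\leq 0$ in $H$ so $\mathcal{L}_H\tilde u=\tfrac{1}{\varepsilon^2}\tilde u_+^p=0$ in $H$ (using $\mathcal{L}_Hq=0$) with $\tilde u=0$ on $\partial H$; the maximum principle forces $\tilde u\equiv 0$ in $H$, and Hopf's lemma applied at any $x_0\in \partial H\cap\partial A_\varepsilon$ from the $A_\varepsilon$ side (where $\tilde u>0$) contradicts the $C^1$-vanishing of $\nabla\tilde u$ from inside $H$. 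The main obstacle will be the per-component sharp lower bound in the connectedness argument: one has to adapt the anisotropic truncated test-function construction of Proposition \ref{le301-upbdd} to a component whose location is \emph{a priori} unknown, exploiting the minimum property of $q^2\sqrt{\det K_H}$ to still obtain the value at $x^*$ as a floor for each summand.
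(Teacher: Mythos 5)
Your capacity argument for $\mathrm{diam}(A_\varepsilon)/\mathrm{dist}(A_\varepsilon,\partial\Omega)\to 0$ and your maximum-principle argument for simple connectedness are essentially the paper's (the paper tests with $u_\varepsilon/(q\ln\frac1\varepsilon)$ and invokes Lemma \ref{le02}, rather than $u_\varepsilon^+/(q_0\ln\frac1\varepsilon)$, but the mechanism is identical). The genuine gap is in your connectedness step, which is where the real work lies. You assert that each component $A^i$ contributes at least $\pi q^2\sqrt{\det K_H}(x^*)\ln\frac1\varepsilon\,(1+o(1))$ to $I_\varepsilon(u_\varepsilon)$, to be obtained by ``mimicking the anisotropic test-function computation of Proposition \ref{le301-upbdd}''. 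That computation evaluates $I_\varepsilon$ at explicit competitors and therefore only produces \emph{upper} bounds for $c_\varepsilon$; it cannot produce a \emph{lower} bound for the energy of the actual solution localized near a given component. A sharp lower bound of that type is precisely the content of the capacity estimates in Proposition \ref{loca}, which appear later and themselves rely on Lemma \ref{bdd of core} and on \eqref{3-15} --- i.e.\ on the boundedness of the core energy and of the circulation --- none of which is available per component at this stage. Worse, the claimed floor is false for a component carrying asymptotically vanishing circulation: nothing in your setup excludes a large main component together with a tiny satellite whose local energy is $o(\ln\frac1\varepsilon)$, and the relation $t_iu^i\in\mathcal{N}_\varepsilon$, $I_\varepsilon(t_iu^i)\geq c_\varepsilon$ does not convert into a lower bound on the local contribution of $A^i$ to $I_\varepsilon(u_\varepsilon)$, since $t_i$ may be large and the Dirichlet energy of $u^i$ is not localized on $A^i$ (the far fields of the components overlap). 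You would also need superadditivity of the total energy over the per-component contributions, which you do not address.

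The paper avoids all of this with a purely local second-variation argument on the Nehari manifold: with $\psi_i=(u_\varepsilon-q\ln\frac1\varepsilon)\chi_{A_i}$ it perturbs $u_\varepsilon$ to $u_\varepsilon+s\psi_1-s\eta_0\psi_2$, reprojects onto $\mathcal{N}_\varepsilon$ via the implicit function theorem with $t(s)=1+O(s^2)$ (the constant $\eta_0$ is chosen exactly so that $t'(0)=0$), and observes that each $\psi_i$ is a strictly negative direction for the second variation, because
\begin{equation*}
\int_{\Omega}(K_H\nabla\psi_i|\nabla\psi_i)\,dx-\frac{p}{\varepsilon^2}\int_{\Omega}\Bigl(u_\varepsilon-q\ln\frac1\varepsilon\Bigr)_+^{p-1}\psi_i^2\,dx=-\frac{p-1}{\varepsilon^2}\int_{A_i}\psi_i^{p+1}\,dx<0 .
\end{equation*}
This yields $c_\varepsilon\leq I_\varepsilon(t(s)\tilde w_\varepsilon)<\max_{t\geq0}I_\varepsilon(tu_\varepsilon)=c_\varepsilon$ for small $s$, a contradiction that works no matter how small or where the second component is and that needs only the crude bound $c_\varepsilon\leq C\ln\frac1\varepsilon$. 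To salvage your energy-counting strategy you would first have to prove a per-component circulation lower bound and an energy localization/superadditivity statement, which is substantially harder than the paper's short second-variation computation.
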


\begin{proof}

Assume that $ A_\varepsilon $ has two components $ A_1, A_2 $. We denote $ \psi_i=\left( u_\varepsilon-q\ln\frac{1}{\varepsilon}\right) \chi_{A_i}\in H^1_0(\Omega) $. Let $ \eta_0>0 $ to be determined later. Define $ \tilde{w}_\varepsilon(s)=u_\varepsilon+s\psi_1-s\eta_0\psi_2$. Then $\tilde{w}_\varepsilon(s)\in H^1_0(\Omega) $ for $ s\geq 0 $ sufficiently small.
By Proposition \ref{exist}, $ t_0 $ is a maximum point of $ I_\varepsilon(t\tilde{w}_\varepsilon) $ if and only if
\begin{equation*}
	t_0\int_{\Omega}(K_H(x)\nabla \tilde{w}_\varepsilon|\nabla \tilde{w}_\varepsilon)dx=\frac{1}{\varepsilon^2}\int_{\Omega}\left( t_0\tilde{w}_\varepsilon-q\ln\frac{1}{\varepsilon}\right) ^p_+ \tilde{w}_\varepsilon dx.
\end{equation*}
	
Let $ D(s,t)=t\int_{\Omega}(K_H(x)\nabla \tilde{w}_\varepsilon|\nabla \tilde{w}_\varepsilon)dx-\frac{1}{\varepsilon^2}\int_{\Omega}(t \tilde{w}_\varepsilon-q\ln\frac{1}{\varepsilon})^p_+ \tilde{w}_\varepsilon dx. $ Then $ D(0,1)=0. $ By $ q>0 $, we have
	\begin{equation*}
	\begin{split}
	\frac{\partial D(0,t)}{\partial t}\bigg|_{t=1}=&\int_{\Omega}(K_H(x)\nabla u_\varepsilon|\nabla u_\varepsilon)dx-\frac{p}{\varepsilon^2}\int_{\Omega}\left( u_\varepsilon-q\ln\frac{1}{\varepsilon}\right) ^{p-1}_+u_\varepsilon^2dx\\
	 =&\frac{1}{\varepsilon^2}\int_{\Omega}\left( \left( u_\varepsilon-q\ln\frac{1}{\varepsilon}\right) ^p_+u_\varepsilon-p\left( u_\varepsilon-q\ln\frac{1}{\varepsilon}\right) ^{p-1}_+u_\varepsilon^2\right) dx<0.
	\end{split}
	\end{equation*}
Hence by the implicit function theorem, there is a function $ t=t(s) $ in the neighborhood of $s=0$ such that $ t(0)=1$ and $ D(s, t(s))=0 $
, which implies that,  $ t(s)\tilde{w}_\varepsilon\in \mathcal{N}_\varepsilon $. Note that
\begin{equation*}
\begin{split}
D_s(0,1)=&2\int_{\Omega}(K_H(x)\nabla u_\varepsilon| \nabla(\psi_1-\eta_0\psi_2))dx-\frac{1}{\varepsilon^2}\int_{\Omega}\left( u_\varepsilon-q\ln\frac{1}{\varepsilon}\right) ^p_+(\psi_1-\eta_0\psi_2)dx\\
&-\frac{1}{\varepsilon^2}\int_{\Omega}p\left( u_\varepsilon-q\ln\frac{1}{\varepsilon}\right) ^{p-1}_+u_\varepsilon(\psi_1-\eta_0\psi_2)dx\\
=&-\frac{1}{\varepsilon^2}\int_{\Omega}\left( u_\varepsilon-q\ln\frac{1}{\varepsilon}\right) ^{p-1}_+\left( (p-1) u_\varepsilon+q\ln\frac{1}{\varepsilon}\right) (\psi_1-\eta_0\psi_2)dx.
\end{split}
\end{equation*}
If we choose
\begin{equation*}
\eta_0=\frac{\int_{\Omega}\left( u_\varepsilon-q\ln\frac{1}{\varepsilon}\right) ^{p-1}_+\left( (p-1) u_\varepsilon+q\ln\frac{1}{\varepsilon}\right) \psi_1dx}{\int_{\Omega}\left( u_\varepsilon-q\ln\frac{1}{\varepsilon}\right) ^{p-1}_+\left( (p-1) u_\varepsilon+q\ln\frac{1}{\varepsilon}\right) \psi_2dx}>0,
\end{equation*}
then by the chain rule,  $ t'(0)=-\frac{D_s(0,1)}{D_t(0,1)}=0 $, which implies that for $s$ small $ t(s)=1+O(s^2) $.	

We calculate $ I_\varepsilon(t(s)\tilde{w}_\varepsilon) $. Since $ supp(\psi_1)\cap supp(\psi_2)=\varnothing $, we obtain
	\begin{equation*}
	\begin{split}
	&\int_{\Omega}(K_H(x)\nabla \tilde{w}_\varepsilon|\nabla \tilde{w}_\varepsilon)dx\\
	=&\int_{\Omega}(K_H(x)\nabla u_\varepsilon|\nabla u_\varepsilon)dx+s^2\int_{\Omega}(K_H(x)\nabla \psi_1|\nabla \psi_1)dx+\eta_0^2s^2\int_{\Omega}(K_H(x)\nabla \psi_2|\nabla \psi_2)dx\\
	&+2s\frac{1}{\varepsilon^2}\int_{\Omega}\left( u_\varepsilon-q\ln\frac{1}{\varepsilon}\right) ^p_+ \psi_1 dx-2s\eta_0\frac{1}{\varepsilon^2}\int_{\Omega}\left( u_\varepsilon-q\ln\frac{1}{\varepsilon}\right) ^p_+ \psi_2 dx.
	\end{split}
	\end{equation*}
Since $ t(s)=1+O(s^2) $, we have
	\begin{equation*}
	\begin{split}
	&\frac{1}{(p+1)\varepsilon^2}\int_{\Omega}\left( t(s)\tilde{w}_\varepsilon-q\ln\frac{1}{\varepsilon}\right) ^{p+1}_+dx\\
	 =&\frac{1}{(p+1)\varepsilon^2}\int_{\Omega}\left( t(s)u_\varepsilon-q\ln\frac{1}{\varepsilon})^{p+1}_+dx+\frac{1}{\varepsilon^2}\int_{\Omega}(t(s)u_\varepsilon-q\ln\frac{1}{\varepsilon}\right) ^p_+(s\psi_1-\eta_0s\psi_2)dx\\
	&+\frac{p}{2\varepsilon^2}\int_{\Omega}\left( t(s)u_\varepsilon-q\ln\frac{1}{\varepsilon}\right) ^{p-1}_+(s\psi_1-\eta_0s\psi_2)^2dx+O(s^{2+\sigma})\\
	 =&\frac{1}{(p+1)\varepsilon^2}\int_{\Omega}\left( t(s)u_\varepsilon-q\ln\frac{1}{\varepsilon}\right) ^{p+1}_+dx+\frac{1}{\varepsilon^2}\int_{\Omega}\left( u_\varepsilon-q\ln\frac{1}{\varepsilon}\right) ^p_+(s\psi_1-\eta_0s\psi_2)dx\\
	&+\frac{p}{2\varepsilon^2}\int_{\Omega}\left( u_\varepsilon-q\ln\frac{1}{\varepsilon}\right) ^{p-1}_+(s^2\psi_1^2+\eta_0^2s^2\psi_2^2)dx+O(s^{2+\sigma}),
	\end{split}
	\end{equation*}
for some $ \sigma>0. $ So by Proposition \ref{exist}, we get
	\begin{equation*}
	\begin{split}
	c_\varepsilon\leq &I_\varepsilon(t(s)\tilde{w}_\varepsilon)\\
	=&\frac{t(s)^2}{2}\int_{\Omega}(K_H(x)\nabla \tilde{w}_\varepsilon|\nabla \tilde{w}_\varepsilon)dx-\frac{1}{(p+1)\varepsilon^2}\int_{\Omega}\left( t(s)\tilde{w}_\varepsilon-q\ln\frac{1}{\varepsilon}\right) ^{p+1}_+dx\\
	=&\frac{t(s)^2}{2}\int_{\Omega}(K_H(x)\nabla u_\varepsilon|\nabla u_\varepsilon)dx-\frac{1}{(p+1)\varepsilon^2}\int_{\Omega}\left( t(s)u_\varepsilon-q\ln\frac{1}{\varepsilon}\right) ^{p+1}_+dx\\
	&+\frac{s^2}{2}\bigg[ \int_{\Omega}(K_H(x)\nabla \psi_1|\nabla \psi_1)dx
	-\frac{p}{\varepsilon^2}\int_{\Omega}\left( u_\varepsilon-q\ln\frac{1}{\varepsilon}\right) ^{p-1}_+\psi_1^2dx\\
	&+\eta_0^2\left(\int_{\Omega}(K_H(x)\nabla \psi_2|\nabla \psi_2)dx-\frac{p}{\varepsilon^2}\int_{\Omega}\left( u_\varepsilon-q\ln\frac{1}{\varepsilon}\right) ^{p-1}_+\psi_2^2dx \right) \bigg]+O(s^{2+\sigma}).
	\end{split}
	\end{equation*}
Note that
	\begin{equation*}
	\begin{split}
\frac{t(s)^2}{2}\int_{\Omega}(K_H(x)\nabla u_\varepsilon|\nabla u_\varepsilon)dx-\frac{1}{(p+1)\varepsilon^2}\int_{\Omega}\left( t(s)u_\varepsilon-q\ln\frac{1}{\varepsilon}\right) ^{p+1}_+dx\leq \max\limits_{t\geq 0}I_\varepsilon(tu_\varepsilon).
	\end{split}
	\end{equation*}
Direct calculation shows that
	\begin{equation*}
	\begin{split}
	&\int_{\Omega}(K_H(x)\nabla \psi_1|\nabla \psi_1)dx-\frac{p}{\varepsilon^2}\int_{\Omega}\left( u_\varepsilon-q\ln\frac{1}{\varepsilon}\right) ^{p-1}_+\psi_1^2dx\\
	=&\int_{A_1}\left( K_H(x)\nabla \left( u_\varepsilon-q\ln\frac{1}{\varepsilon}\right) |\nabla \psi_1\right) dx-\frac{p}{\varepsilon^2}\int_{A_1}\left( u_\varepsilon-q\ln\frac{1}{\varepsilon}\right) ^{p-1}_+\psi_1^2dx\\
	=&-\frac{p-1}{\varepsilon^2}\int_{A_1}(\psi_1)^{p+1}dx< 0.\\
	\end{split}
	\end{equation*}
Here we have used $ \mathcal{L}q=0 $. Similarly,  $ \int_{\Omega}(K_H(x)\nabla \psi_2|\nabla \psi_2)dx-\frac{p}{\varepsilon^2}\int_{\Omega}\left( u_\varepsilon-q\ln\frac{1}{\varepsilon}\right) ^{p-1}_+\psi_2^2dx<0. $
Thus $ c_\varepsilon<\max\limits_{t\geq 0}I_\varepsilon(tu_\varepsilon)=I_\varepsilon(u_\varepsilon)=c_\varepsilon $, which is clearly a contradiction.  So we conclude that $ A_\varepsilon $ is connected.
	
Moreover, we can prove that $ A_\varepsilon $ is simply connected. Let $ U $ be the connected component of $ \Omega\setminus A_\varepsilon $ such that $ \partial \Omega\subseteq \overline{U} $. Note that  $ \Omega\setminus U $ is open and $ \mathcal{L}\left( u_\varepsilon-q\ln\frac{1}{\varepsilon}\right) \geq 0$ in $ \Omega\setminus U $ and $ u_\varepsilon-q\ln\frac{1}{\varepsilon}\geq 0 $ on $ \partial (\Omega\setminus U) $.  By the strong maximum principle, $ u_\varepsilon-q\ln\frac{1}{\varepsilon}> 0 $ in $ \Omega\setminus U. $ Thus $ A_\lambda $ is simply connected.

Finally, by the definition of capacity and $ u_\varepsilon\geq q\ln\frac{1}{\varepsilon} $ on $ A_\varepsilon $, we have
\begin{equation*}
\begin{split}
\left( \ln\frac{1}{\varepsilon}\right) ^2\int_{\Omega\setminus A_\varepsilon}q^2\left( K_H(x)\nabla \left( \frac{u_\varepsilon}{q\ln\frac{1}{\varepsilon}}\right)  | \nabla \left( \frac{u_\varepsilon}{q\ln\frac{1}{\varepsilon}}\right) \right) dx
\geq& \left( \ln\frac{1}{\varepsilon}\right) ^2\inf_{  \Omega}(q^2\lambda_2) \int_{\Omega\setminus A_\varepsilon}\bigg|\nabla \left( \frac{u_\varepsilon}{q\ln\frac{1}{\varepsilon}}\right) \bigg|^2dx\\
\geq& C\left( \ln\frac{1}{\varepsilon}\right) ^2 \lambda_2 \text{cap}(A_\varepsilon, \Omega),
\end{split}
\end{equation*}
where $ \lambda_2(x)=\frac{k^2}{k^2+|x|^2} $ is the smaller eigenvalue of $ K_H. $ Since $ \mathbb{R}^2\setminus\Omega $ is connected and unbounded, by the classical estimates of capacity (see \cite{DV, SV}), we have
\begin{equation*}
\text{cap}(A_\varepsilon, \Omega)\geq \frac{2\pi}{\ln16\left( 1+\frac{2\text{dist}(A_\varepsilon, \partial \Omega)}{\text{diam}(A_\varepsilon)}\right) }.
\end{equation*}
By Lemmas \ref{le01}, \ref{le02} and Proposition \ref{le301-upbdd},
\begin{equation*}
\begin{array}{ll}
\left( \ln\frac{1}{\varepsilon}\right) ^2\int_{\Omega\setminus A_\varepsilon}q^2\left( K_H(x)\nabla \left( \frac{u_\varepsilon}{q\ln\frac{1}{\varepsilon}}\right) | \nabla \left( \frac{u_\varepsilon}{q\ln\frac{1}{\varepsilon}}\right)\right) dx&\leq \int_{\Omega}(K_H(x)\nabla u_\varepsilon| \nabla u_\varepsilon)dx\\
&\leq \frac{2(p+1)}{p-1}I_\varepsilon(u_\varepsilon)\leq c\ln\frac{1}{\varepsilon}.
\end{array}
\end{equation*}
Combining all these inequalities,  we get  $ \lim_{\varepsilon\to 0}\frac{diam(A_\varepsilon)}{dist(A_\varepsilon, \partial \Omega)}=0. $

\end{proof}

Define the energy of the vortex core $ E_c(\varepsilon)=\int_{A_\varepsilon}\left( K_H(x)\nabla\left( u_\varepsilon-q\ln\frac{1}{\varepsilon}\right) | \nabla \left( u_\varepsilon-q\ln\frac{1}{\varepsilon}\right) \right) dx $. We will show that $ E_c(\varepsilon) $ is uniformly bounded with respect to $ \varepsilon. $
\begin{lemma}\label{bdd of core}
	There holds for some $ C $ independent of $ \varepsilon$
	\begin{equation*}
	\int_{A_\varepsilon}\left( K_H(x)\nabla \left( u_\varepsilon-q\ln\frac{1}{\varepsilon}\right)  | \nabla \left( u_\varepsilon-q\ln\frac{1}{\varepsilon}\right) \right) dx\leq C.
	\end{equation*}
	
\end{lemma}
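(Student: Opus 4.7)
First, I would derive the exact identity
\[
E_c(\varepsilon) = \frac{1}{\varepsilon^2}\int_\Omega \bigl(u_\varepsilon - q\ln\tfrac{1}{\varepsilon}\bigr)_+^{p+1}\,dx.
\]
Testing the weak form of \eqref{eq1} with $\psi_\varepsilon := (u_\varepsilon - q\ln\frac{1}{\varepsilon})_+$ is legitimate because Proposition \ref{connect} forces $A_\varepsilon$ to lie compactly inside $\Omega$ for small $\varepsilon$, so $\psi_\varepsilon \in H^1_0(\Omega)$ with $\nabla\psi_\varepsilon = \chi_{A_\varepsilon}\nabla(u_\varepsilon - q\ln\frac{1}{\varepsilon})$. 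On $A_\varepsilon$ I would split $\nabla u_\varepsilon = \nabla(u_\varepsilon - q\ln\frac{1}{\varepsilon}) + \ln\frac{1}{\varepsilon}\,\nabla q$; the cross term $\ln\frac{1}{\varepsilon}\int(K_H\nabla q|\nabla\psi_\varepsilon)\,dx$ vanishes because $\mathcal{L}_H q = 0$ tested against $\psi_\varepsilon\in H^1_0$, and the identity follows from the pointwise PDE on the right.

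Setting $M_\varepsilon := \frac{\ln(1/\varepsilon)}{\varepsilon^2}\int_\Omega (u_\varepsilon - q\ln\frac{1}{\varepsilon})_+^p\, q\,dx$, the Nehari condition $\langle I_\varepsilon'(u_\varepsilon),u_\varepsilon\rangle = 0$ (after splitting $u_\varepsilon = (u_\varepsilon - q\ln\frac{1}{\varepsilon}) + q\ln\frac{1}{\varepsilon}$ on $A_\varepsilon$) reads $\|u_\varepsilon\|_{\mathcal{H}(\Omega)}^2 = E_c(\varepsilon) + M_\varepsilon$, while $I_\varepsilon(u_\varepsilon) = c_\varepsilon$ rewrites as $\|u_\varepsilon\|_{\mathcal{H}(\Omega)}^2 = 2c_\varepsilon + \frac{2}{p+1}E_c(\varepsilon)$. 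Eliminating $\|u_\varepsilon\|_{\mathcal{H}(\Omega)}^2$ produces
\[
E_c(\varepsilon) = \frac{p+1}{p-1}\bigl(2c_\varepsilon - M_\varepsilon\bigr),
\]
so the lemma reduces to proving $2c_\varepsilon - M_\varepsilon \leq C$.

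To close the argument I would pair the sharp upper bound $c_\varepsilon \leq \pi q^2\sqrt{\det K_H}(x^*)\ln(1/\varepsilon) + O(1)$ from Proposition \ref{le301-upbdd} (available because $(A_1)$ and the rotational invariance of Lemma \ref{le04} align the ellipse-shaped test function with the principal axes of $K_H(x^*)$) with a matching lower bound $M_\varepsilon \geq 2\pi q^2\sqrt{\det K_H}(x^*)\ln(1/\varepsilon) - C$. For the latter, integration by parts using $\mathcal{L}_H q = 0$ and $u_\varepsilon|_{\partial\Omega}=0$ yields the boundary-flux identity $\frac{1}{\varepsilon^2}\int (u_\varepsilon - q\ln\frac{1}{\varepsilon})_+^p q\,dx = -\int_{\partial\Omega} q K_H\nabla u_\varepsilon\cdot\nu\,d\sigma$; combined with a refinement of the weighted-capacity lower bound from Proposition \ref{connect} and a rescaling $y = (x-x^*)/\varepsilon$ that produces a limit profile solving $-\mathrm{div}(K_H(x^*)\nabla w) = w_+^p$ on a bounded domain, this gives the needed constant $2\pi q^2\sqrt{\det K_H}(x^*)$.

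The main obstacle is reaching $O(1)$-precision in the lower bound on $M_\varepsilon$: the capacity argument of Proposition \ref{connect} only yields leading-order matching, and closing the gap to a bounded error essentially amounts to identifying the weak limit of $\varepsilon^{-2}(u_\varepsilon - q\ln\frac{1}{\varepsilon})_+^p\,dx$ as a point mass of weight $2\pi q(x^*)\sqrt{\det K_H(x^*)}$ at $x^*$, which is itself one of the principal concentration statements of the paper, so the energy bound must be developed in tandem with the full concentration analysis of the section.
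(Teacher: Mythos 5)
Your opening identity $E_c(\varepsilon)=\frac{1}{\varepsilon^2}\int_\Omega(u_\varepsilon-q\ln\frac{1}{\varepsilon})_+^{p+1}dx$ and the algebraic consequence $E_c(\varepsilon)=\frac{p+1}{p-1}(2c_\varepsilon-M_\varepsilon)$ are both correct and coincide with the paper's (3.12) and (3.23). (Minor point: you do not need Proposition \ref{connect} to justify the test function; $(u_\varepsilon-q\ln\frac{1}{\varepsilon})_+$ vanishes near $\partial\Omega$ simply because $u_\varepsilon=0$ and $q>0$ there.) The problem is the closing step. Since $2c_\varepsilon-M_\varepsilon=\frac{p-1}{p+1}E_c(\varepsilon)$, the statement ``$2c_\varepsilon-M_\varepsilon\leq C$'' is not a reduction of the lemma but a verbatim restatement of it, so the entire burden falls on your proposed matching of a sharp upper bound for $c_\varepsilon$ with a sharp lower bound $M_\varepsilon\geq 2\pi q^2\sqrt{\det K_H}(x^*)\ln\frac{1}{\varepsilon}-C$. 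That lower bound is not available at this stage: the paper's $O(1)$-precision estimates (Proposition \ref{better esti}) are proved \emph{after} and \emph{using} Lemma \ref{bdd of core} (via (3.15)), and they require $x^*\in\Omega$, whereas in the setting of Theorem \ref{thm01} the concentration point lies on $\partial\Omega$ and no $O(1)$-matched bounds are ever established. You acknowledge this circularity yourself in the final paragraph, but acknowledging it does not close the gap; as written the argument does not prove the lemma.

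The paper's route avoids sharp asymptotics entirely. From your own identity and $E_c(\varepsilon)\geq 0$ one gets $M_\varepsilon\leq 2c_\varepsilon\leq C\ln\frac{1}{\varepsilon}$ (only the crude upper bound of Proposition \ref{le301-upbdd} is needed), hence $\frac{1}{\varepsilon^2}\int_{A_\varepsilon}(u_\varepsilon-q\ln\frac{1}{\varepsilon})^p dx\leq C$ using $\inf_\Omega q>0$; this is (3.13) combined with Lemma \ref{le01}. The missing ingredient in your proposal is the two-dimensional Gagliardo--Nirenberg inequality $\|f\|_{L^{p+1}}^{p+1}\leq C\|f\|_{L^p}^{p}\|\nabla f\|_{L^2}$ applied to $f=(u_\varepsilon-q\ln\frac{1}{\varepsilon})_+$: together with the ellipticity bound $(\mathcal{K}2)$ it gives $E_c(\varepsilon)\leq \frac{C}{\varepsilon^2}\|f\|_{L^p}^p\,E_c(\varepsilon)^{1/2}$, hence $E_c(\varepsilon)\leq C\bigl(\frac{1}{\varepsilon^2}\int_{A_\varepsilon}f^p dx\bigr)^2\leq C$. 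This converts the $L^p$ control (which is cheap) into the $L^{p+1}$/energy control (which is the lemma) without any lower bound on $M_\varepsilon$ and without knowing where $A_\varepsilon$ concentrates.
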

\begin{proof}
Direct calculation yields that
\begin{equation}\label{3-12}
\int_{A_\varepsilon}\left( K_H(x)\nabla \left( u_\varepsilon-q\ln\frac{1}{\varepsilon}\right) | \nabla \left( u_\varepsilon-q\ln\frac{1}{\varepsilon}\right) \right) dx=\frac{1}{\varepsilon^2}\int_{A_\varepsilon}\left( u_\varepsilon-q\ln\frac{1}{\varepsilon}\right) ^{p+1}dx,
\end{equation}
and 
\begin{equation}\label{3-13}
\begin{split}
&\int_{\Omega}(K_H(x)\nabla u_\varepsilon| \nabla u_\varepsilon)dx-\int_{A_\varepsilon}\left( K_H(x)\nabla \left( u_\varepsilon-q\ln\frac{1}{\varepsilon}\right) \big| \nabla \left( u_\varepsilon-q\ln\frac{1}{\varepsilon}\right) \right) dx\\
=&\frac{\ln\frac{1}{\varepsilon}}{\varepsilon^2}\int_{A_\varepsilon}\left( u_\varepsilon-q\ln\frac{1}{\varepsilon}\right) ^{p}qdx.
\end{split}
\end{equation}
By \eqref{3-12}, $(\mathcal{K}2)$ and the classical Gagliardo-Nirenberg inequality, we get
\begin{equation*}
\begin{split}
&\int_{A_\varepsilon}\left( K_H(x)\nabla \left( u_\varepsilon-q\ln\frac{1}{\varepsilon}\right) |\nabla \left( u_\varepsilon-q\ln\frac{1}{\varepsilon}\right) \right) dx\\
\leq &\frac{C^*}{\varepsilon^2}\int_{A_\varepsilon}\left( u_\varepsilon-q\ln\frac{1}{\varepsilon}\right) ^{p}dx\left( \int_{A_\varepsilon}\bigg|\nabla \left( u_\varepsilon-q\ln\frac{1}{\varepsilon}\right) \bigg|^2dx \right)^{\frac{1}{2}} \\
\leq & \frac{C^*}{\sqrt{\inf_{\Omega}\lambda_2}\varepsilon^2}\int_{A_\varepsilon}\left( u_\varepsilon-q\ln\frac{1}{\varepsilon}\right) ^{p}dx\left(\int_{A_\varepsilon}\left( K_H(x)\nabla \left( u_\varepsilon-q\ln\frac{1}{\varepsilon}\right) |\nabla \left( u_\varepsilon-q\ln\frac{1}{\varepsilon}\right) \right) dx \right)^{\frac{1}{2}},
\end{split}
\end{equation*}
which implies that
\begin{equation*}
\begin{split}
\int_{A_\varepsilon}\left( K_H(x)\nabla \left( u_\varepsilon-q\ln\frac{1}{\varepsilon}\right) |\nabla \left( u_\varepsilon-q\ln\frac{1}{\varepsilon}\right) \right) dx\leq C\left(\frac{1}{\varepsilon^2}\int_{A_\varepsilon}\left( u_\varepsilon-q\ln\frac{1}{\varepsilon}\right) ^{p}dx\right)^2.
\end{split}
\end{equation*}
By \eqref{3-13} and Proposition \ref{le301-upbdd},  we get
\begin{equation*}
\frac{1}{\varepsilon^2}\int_{A_\varepsilon}\left (u_\varepsilon-q\ln\frac{1}{\varepsilon}\right )^{p}dx\leq \frac{C}{\ln\frac{1}{\varepsilon}}\int_{\Omega}(K_H(x)\nabla u_\varepsilon| \nabla u_\varepsilon)dx\leq \frac{C}{\ln\frac{1}{\varepsilon}}I_\varepsilon(u_\varepsilon)\leq C.
\end{equation*}
Thus we get
\begin{equation*}
\begin{split}
\int_{A_\varepsilon}\left (K_H(x)\nabla \left (u_\varepsilon-q\ln\frac{1}{\varepsilon}\right )| \nabla \left (u_\varepsilon-q\ln\frac{1}{\varepsilon}\right )\right )dx=\frac{1}{\varepsilon^2}\int_{A_\varepsilon}\left (u_\varepsilon-q\ln\frac{1}{\varepsilon}\right )^{p+1}dx\leq  C.
\end{split}
\end{equation*}

\end{proof}

Using Lemma \ref{bdd of core}, we can get the lower bound of the diameter of the vortex core $ A_\varepsilon $ as follows.

\begin{lemma}\label{lowbdd of diam}
	There exists a constant  $ R_1>0 $ independent of $ \varepsilon$ such that 
	\begin{equation*}
	\text{diam}(A_\varepsilon)\geq R_1\varepsilon.
	\end{equation*}
	
\end{lemma}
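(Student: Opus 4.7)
The proof will rest on combining the energy identity \eqref{3-12} with a Gagliardo--Nirenberg/Poincar\'e inequality that uses $\text{diam}(A_\varepsilon)$ as a small parameter. Set $w_\varepsilon := (u_\varepsilon - q\ln\frac{1}{\varepsilon})_+$, which belongs to $H^1_0(\Omega)$ (since $u_\varepsilon = 0$ and $q > 0$ on $\partial\Omega$) and is compactly supported in $\overline{A_\varepsilon}$. Denote $d_\varepsilon = \text{diam}(A_\varepsilon)$ and $X_\varepsilon = \int_{A_\varepsilon} w_\varepsilon^{p+1}\,dx$. The plan is to show that $X_\varepsilon$ is pinched between a lower bound scaling like $\varepsilon^{2(p+1)/(p-1)} d_\varepsilon^{-4/(p-1)}$ and the upper bound $X_\varepsilon\le C\varepsilon^2$ already in hand, and to read off $d_\varepsilon\ge R_1\varepsilon$ from the resulting inequality.

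First I would record the two consequences of Lemma \ref{bdd of core} and the identity \eqref{3-12}: ellipticity $(\mathcal{K}2)$ gives $\|\nabla w_\varepsilon\|_{L^2}^2\le C\int_{A_\varepsilon}(K_H\nabla w_\varepsilon|\nabla w_\varepsilon)\,dx\le C$, and \eqref{3-12} then yields $X_\varepsilon\le C\varepsilon^2$ together with the sharp form $\|\nabla w_\varepsilon\|_{L^2}^2 \le C\varepsilon^{-2}X_\varepsilon$. Next, because $w_\varepsilon$ vanishes outside a set of diameter $d_\varepsilon$, a standard rescaling $g(y) = w_\varepsilon(d_\varepsilon y)$ combined with the two-dimensional Sobolev embedding $H^1_0 \hookrightarrow L^{p+1}$ on a set of unit diameter produces the diameter-weighted Gagliardo--Nirenberg inequality
\begin{equation*}
\int_{A_\varepsilon} w_\varepsilon^{p+1}\,dx \le C\, d_\varepsilon^{2}\, \|\nabla w_\varepsilon\|_{L^2}^{p+1}.
\end{equation*}

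Inserting the bound $\|\nabla w_\varepsilon\|_{L^2}^{p+1} \le C\varepsilon^{-(p+1)} X_\varepsilon^{(p+1)/2}$ from \eqref{3-12} into this inequality and simplifying gives
\begin{equation*}
X_\varepsilon \le C\, d_\varepsilon^{2}\, \varepsilon^{-(p+1)}\, X_\varepsilon^{(p+1)/2},
\end{equation*}
i.e.\ $X_\varepsilon^{(p-1)/2} \ge c\,\varepsilon^{p+1}\, d_\varepsilon^{-2}$, so $X_\varepsilon \ge c\,\varepsilon^{2(p+1)/(p-1)}\, d_\varepsilon^{-4/(p-1)}$. Since $u_\varepsilon$ is a nontrivial critical point, $A_\varepsilon$ is nonempty and $X_\varepsilon>0$, so this inequality is meaningful. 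Combined with $X_\varepsilon\le C\varepsilon^2$ from Lemma \ref{bdd of core}, we obtain $d_\varepsilon^{4/(p-1)} \ge c\,\varepsilon^{4/(p-1)}$, hence $d_\varepsilon\ge R_1\varepsilon$ with $R_1>0$ depending only on $p$, $k$, $q$ and $\Omega$.

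The one step that needs care is the Gagliardo--Nirenberg inequality with the sharp $d_\varepsilon^{2}$ prefactor; it follows by the dilation $g(y)=w_\varepsilon(d_\varepsilon y)$ reducing to the Sobolev embedding on a set of unit diameter, but the clean bookkeeping of exponents is what ultimately forces the precise scale $d_\varepsilon\gtrsim\varepsilon$ and not some weaker power.
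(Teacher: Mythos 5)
Your argument is correct and is essentially the paper's own proof: both hinge on the identity \eqref{3-12}, the uniform energy bound of Lemma \ref{bdd of core}, and the two-dimensional Sobolev inequality weighted by the size of the support. The only cosmetic difference is that you carry $\text{diam}(A_\varepsilon)^2$ through the scaling directly, whereas the paper first deduces $|A_\varepsilon|\geq C\varepsilon^2$ and then invokes the isoperimetric inequality $|A_\varepsilon|\leq \pi\,\text{diam}(A_\varepsilon)^2/4$.
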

\begin{proof}
By \eqref{3-12} and the Sobolev inequality, we have
\begin{equation*}
\begin{split}
&\int_{A_\varepsilon}\left (K_H(x)\nabla \left (u_\varepsilon-q\ln\frac{1}{\varepsilon}\right )| \nabla \left (u_\varepsilon-q\ln\frac{1}{\varepsilon}\right )\right )dx=\frac{1}{\varepsilon^2}\int_{A_\varepsilon}\left (u_\varepsilon-q\ln\frac{1}{\varepsilon}\right )^{p+1}dx\\
\leq&\frac{C_*|A_\varepsilon|}{\varepsilon^2}\left(\int_{A_\varepsilon}\bigg|\nabla \left (u_\varepsilon-q\ln\frac{1}{\varepsilon}\right )\bigg|^2dx \right)^{\frac{p+1}{2}}\\
\leq&\frac{C_*|A_\varepsilon|}{(\inf_{\Omega}\lambda_2)^{\frac{p+1}{2}}\varepsilon^2}\left(\int_{A_\varepsilon}\left (K_H(x)\nabla \left (u_\varepsilon-q\ln\frac{1}{\varepsilon}\right )|\nabla \left (u_\varepsilon-q\ln\frac{1}{\varepsilon}\right )\right )dx \right)^{\frac{p+1}{2}},
\end{split}
\end{equation*}
from which we deduce that
\begin{equation*}
\begin{split}
\frac{|A_\varepsilon|}{\varepsilon^2}\geq C\left(\int_{A_\varepsilon}\left (K_H(x)\nabla \left (u_\varepsilon-q\ln\frac{1}{\varepsilon}\right )|\nabla \left (u_\varepsilon-q\ln\frac{1}{\varepsilon}\right )\right )dx \right)^{\frac{-p+1}{2}}.
\end{split}
\end{equation*}
By Lemma \ref{bdd of core}, we conclude that $ |A_\varepsilon|\geq C\varepsilon^2 $. Thus we complete the proof by using the isoperimetric inequality $ |A_\varepsilon|\leq \pi\text{diam}(A_\varepsilon)^2/4 $.

\end{proof}

\subsection{Asymptotic location of $ A_\varepsilon $}

It follows from  Proposition \ref{connect} that $ \lim_{\varepsilon\to 0}A_\varepsilon=0 $, that is, the vortex core of $ u_\varepsilon $ will shrink to a single point $ \hat{x} $ as $ \varepsilon\to0 $. 
We now prove that the limiting location of $ A_\varepsilon $  is a minimum point of $ q^2\sqrt{det(K_H)} $,  by choosing  test functions suitably and using the classical stream-function method.

\begin{proposition}\label{loca}
There holds
\begin{equation*}
\lim_{\varepsilon\to 0}\text{dist}(A_\varepsilon,\hat{x})=0,
\end{equation*}
where $ \hat{x} $ is a minimizer of $ q^2\sqrt{det(K_H)}. $ As a consequence, there holds
\begin{equation}\label{3-14}
\lim_{\varepsilon\to 0}\frac{c_\varepsilon}{\ln\frac{1}{\varepsilon}}=\pi \min\limits_{\overline{\Omega} }q^2\sqrt{det(K_H)}.
\end{equation}
\end{proposition}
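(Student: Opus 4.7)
The plan is to match an upper bound on $c_\varepsilon/\ln(1/\varepsilon)$ coming from Proposition~\ref{le301-upbdd} (after removing the auxiliary hypothesis $(A_1)$) with a capacity-based lower bound of the same leading constant. From Proposition~\ref{connect}, $\mathrm{diam}(A_\varepsilon)\to 0$ as $\varepsilon\to 0$; along any sequence $\varepsilon_n\to 0$ I would extract a subsequence along which $A_{\varepsilon_n}$ converges in the Hausdorff sense to a single point $\hat x\in\overline\Omega$, so that the task reduces to showing that every such limit point $\hat x$ minimizes $q^2\sqrt{\det K_H}$.

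The first step is to upgrade Proposition~\ref{le301-upbdd} to arbitrary candidate points using Lemma~\ref{le04}. For any $\bar x\in\overline\Omega$, rotating by $-\theta_{\bar x}$ sends $\bar x$ to $(|\bar x|,0)$ on the $x_1$-axis; by Lemma~\ref{le03} the operator $\mathcal L_H$ is preserved, the rotated equation (by Lemma~\ref{le04}) has the same mountain-pass value $c_\varepsilon$, and $\det K_H$ together with $q_{\bar x}(|\bar x|,0)=q(\bar x)$ are rotationally invariant. Applying Proposition~\ref{le301-upbdd} to the rotated problem therefore gives $\limsup_{\varepsilon\to 0} c_\varepsilon/\ln(1/\varepsilon)\leq \pi q(\bar x)^2\sqrt{\det K_H(\bar x)}$; minimizing over $\bar x$ yields the unconditional upper bound $\limsup c_\varepsilon/\ln(1/\varepsilon)\leq \pi\min_{\overline\Omega}q^2\sqrt{\det K_H}$.

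For the matching lower bound, the Nehari identity $\langle I'_\varepsilon(u_\varepsilon),u_\varepsilon\rangle=0$ combined with the definition of $I_\varepsilon$ gives
\[
c_\varepsilon=\tfrac{p-1}{2(p+1)}\alpha_\varepsilon+\tfrac{\ln(1/\varepsilon)}{2}\beta_\varepsilon,\qquad \alpha_\varepsilon:=\tfrac{1}{\varepsilon^{2}}\!\!\int_{A_\varepsilon}\!\bigl(u_\varepsilon-q\ln\tfrac{1}{\varepsilon}\bigr)^{p+1}dx=O(1)
\]
by Lemma~\ref{bdd of core}, so the leading behaviour of $c_\varepsilon$ is captured by $\tfrac12\ln(1/\varepsilon)\beta_\varepsilon=\tfrac12(\|u_\varepsilon\|_{\mathcal H(\Omega)}^2-\alpha_\varepsilon)$. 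Applying Lemma~\ref{le02} to $\psi_\varepsilon:=u_\varepsilon/(q\ln(1/\varepsilon))\in H_0^1(\Omega)$, which satisfies $\psi_\varepsilon\geq 1$ on $A_\varepsilon$, one obtains
\[
\|u_\varepsilon\|_{\mathcal H(\Omega)}^2=\bigl(\ln\tfrac{1}{\varepsilon}\bigr)^{2}\!\!\int_\Omega q^2\bigl(K_H\nabla\psi_\varepsilon|\nabla\psi_\varepsilon\bigr)\,dx\;\geq\;\bigl(\ln\tfrac{1}{\varepsilon}\bigr)^{2}\,\mathrm{cap}_{q^{2}K_H}(A_\varepsilon,\Omega),
\]
where $\mathrm{cap}_{q^{2}K_H}$ denotes the capacity associated with the quadratic form $\int q^2(K_H\nabla\cdot|\nabla\cdot)\,dx$, reducing the problem to a sharp asymptotic for this weighted capacity.

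The technical heart is the asymptotically sharp estimate
\[
\mathrm{cap}_{q^{2}K_H}(A_\varepsilon,\Omega)\;\geq\;\frac{2\pi\,q(\hat x)^2\,\sqrt{\det K_H(\hat x)}}{\ln\bigl(1/\mathrm{diam}(A_\varepsilon)\bigr)}\bigl(1+o(1)\bigr),
\]
which I would prove by localizing the infimum to a fixed small ball around $\hat x$, freezing $q$ and $K_H$ at $\hat x$ with $C^2$-Taylor error, and applying the affine change of variables $y=K_H(\hat x)^{-1/2}(x-\hat x)$; in the new coordinates the frozen quadratic form becomes the Euclidean Dirichlet integral times the Jacobian $\sqrt{\det K_H(\hat x)}$, so the classical isotropic logarithmic capacity bound (the tool from \cite{BF80,DV,SV} already used in Proposition~\ref{connect}) supplies the constant. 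Combined with the diameter lower bound $\mathrm{diam}(A_\varepsilon)\geq R_1\varepsilon$ from Lemma~\ref{lowbdd of diam} (which forces $\ln(1/\mathrm{diam}(A_\varepsilon))\leq \ln(1/\varepsilon)+O(1)$), this yields $\liminf c_\varepsilon/\ln(1/\varepsilon)\geq \pi q(\hat x)^2\sqrt{\det K_H(\hat x)}$; juxtaposed with the upper bound of the second paragraph, this forces $q^{2}\sqrt{\det K_H}(\hat x)=\min_{\overline\Omega} q^{2}\sqrt{\det K_H}$, identifying $\hat x$ as a minimizer of $q^2\sqrt{\det K_H}$ and proving \eqref{3-14} by squeezing. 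The main obstacle is pinning down the sharp constant in the capacity asymptotic: the variable weight $q^2$ and the variable anisotropy $K_H$ cannot be removed by a single global change of coordinates, and one must quantify the freeze-coefficient error on a scale simultaneously much larger than $\mathrm{diam}(A_\varepsilon)$ yet small enough that the $C^2$ Taylor expansion of $q$ and $K_H$ around $\hat x$ controls the correction.
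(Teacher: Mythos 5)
Your overall strategy is the paper's: match the test-function upper bound for $c_\varepsilon$ against a capacity lower bound, using the Nehari identity $c_\varepsilon=\frac{p-1}{2(p+1)}\alpha_\varepsilon+\frac{\ln(1/\varepsilon)}{2}\beta_\varepsilon$ with $\alpha_\varepsilon=O(1)$, Lemma~\ref{le02} to pass to the weighted form $\int q^2(K_H\nabla\cdot|\nabla\cdot)$, a linear change of variables diagonalizing $K_H$ at the concentration point to invoke the classical isotropic capacity estimate, and the lower diameter bound of Lemma~\ref{lowbdd of diam} to identify the logarithm. The rotation step you use to discharge $(A_1)$ is exactly what the paper does in Section~4. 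However, there is a genuine gap precisely at the step you yourself flag as ``the main obstacle'': the sharp lower bound $\mathrm{cap}_{q^2K_H}(A_\varepsilon,\Omega)\geq 2\pi q^2\sqrt{\det K_H}(\hat x)\,(1+o(1))/\ln(1/\mathrm{diam}(A_\varepsilon))$. ``Localizing the infimum to a fixed small ball around $\hat x$'' does not work as stated: discarding the energy outside $B_\rho(\hat x)$ leaves a function that is not admissible for $\mathrm{cap}(A_\varepsilon,B_\rho(\hat x))$ (it need not vanish on $\partial B_\rho$), while freezing the coefficients over all of $\Omega$ only yields the lossy constant $\inf_\Omega q^2\lambda_2$ rather than $q^2\sqrt{\det K_H}(\hat x)$. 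Since the full Dirichlet energy of $u_\varepsilon/(q\ln\frac1\varepsilon)$ is spread over the whole annulus between $A_\varepsilon$ and $\partial\Omega$, where $q$ and $K_H$ genuinely vary, one must show that the relevant portion of the energy lives where the coefficients are close to their values at $\hat x$ --- and that is the nontrivial content of the proof, not a routine Taylor-expansion bookkeeping.

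The paper fills this gap with a mechanism absent from your proposal: the level sets $A_\varepsilon^\sigma=\{u_\varepsilon>q\ln\frac1\sigma\}$ and the truncations $w_\varepsilon^{\sigma,\tau}=\min\{(u_\varepsilon-q\ln\frac1\sigma)_+/(q\ln\frac\sigma\tau),1\}$, for which the hypothesis $\mathcal{L}_Hq=0$ yields the \emph{exact} identity \eqref{3-16},
\begin{equation*}
\ln\tfrac{\sigma}{\tau}\int_{\Omega}q^2(K_H\nabla w^{\sigma,\tau}_\varepsilon|\nabla w^{\sigma,\tau}_\varepsilon)\,dx=\beta_\varepsilon ,
\end{equation*}
valid for every layer $0<\tau<\sigma\le 1$. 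Taking $\sigma=1$, $\tau=\rho$ first gives $\mathrm{diam}(A_\varepsilon^\rho)\leq C_1\rho^{C_2}$ via the capacity estimate, so that on the support of the inner-layer function $w_\varepsilon^{\rho,\varepsilon}$ the coefficients oscillate by at most a factor $1+\delta$; then taking $\sigma=\rho$, $\tau=\varepsilon$ shows this single inner layer already carries the full quantity $\beta_\varepsilon/\ln\frac\rho\varepsilon$, and only there does one freeze coefficients and apply the affine map $T_\varepsilon$ and the classical estimate. In other words, the localization is performed along level sets of $u_\varepsilon$ (where admissibility is automatic) rather than along geometric balls, and the ``series of condensers'' structure is encoded in an identity rather than an inequality. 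Without this (or an equivalent argument controlling the trace of the near-optimal potential on $\partial B_\rho(\hat x)$), your lower bound with the sharp constant $q^2\sqrt{\det K_H}(\hat x)$ is not established, and the squeezing argument that identifies $\hat x$ as a minimizer does not close.
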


\begin{proof}
It follows from \eqref{3-13} that
\begin{equation*}
\begin{split}
&\frac{\ln\frac{1}{\varepsilon}}{\varepsilon^2}\int_{A_\varepsilon}\left (u_\varepsilon-q\ln\frac{1}{\varepsilon}\right )^{p}qdx\\
=&\int_{\Omega}(K_H(x)\nabla u_\varepsilon| \nabla u_\varepsilon)dx-\int_{A_\varepsilon}\left (K_H(x)\nabla \left (u_\varepsilon-q\ln\frac{1}{\varepsilon}\right )| \nabla \left (u_\varepsilon-q\ln\frac{1}{\varepsilon}\right )\right )dx\\
=&2I_\varepsilon(u_\varepsilon)-\frac{p-1}{(p+1)\varepsilon^2}\int_{A_\varepsilon}\left (u_\varepsilon-q\ln\frac{1}{\varepsilon}\right )^{p+1}dx.
\end{split}
\end{equation*}
Hence by Lemma \ref{bdd of core}, we get
\begin{equation}\label{3-15}
\frac{1}{\varepsilon^2}\int_{A_\varepsilon}\left (u_\varepsilon-q\ln\frac{1}{\varepsilon}\right )^{p}qdx=\frac{2c_\varepsilon}{\ln\frac{1}{\varepsilon}}+O\left (\frac{1}{\ln\frac{1}{\varepsilon}}\right )\leq C.
\end{equation}

For any $ 0<\tau<\sigma\leq 1 $, define $ w^{\sigma, \tau}_\varepsilon:=\min\left \{\frac{\left (u_\varepsilon-q\ln\frac{1}{\sigma}\right )_+}{q\ln\frac{\sigma}{\tau}}, 1\right \}\in H^{1}_0(\Omega) $ and $ A_\varepsilon^\sigma:=\left \{x\in\Omega\mid u_\varepsilon(x)>q(x)\ln\frac{1}{\sigma}\right \} $. Then one computes directly that $  w^{\sigma, \tau}_\varepsilon\equiv1 $ on $ A_\varepsilon^\tau $ and $ supp(w^{\sigma, \tau}_\varepsilon)=\overline{A_\varepsilon^\sigma}. $

We claim that for every $ \varepsilon\leq \tau $,
\begin{equation}\label{3-16}
\ln\frac{\sigma}{\tau}\int_{\Omega}q^2(K_H(x)\nabla w^{\sigma, \tau}_\varepsilon|\nabla w^{\sigma, \tau}_\varepsilon)dx=\frac{1}{\varepsilon^2}\int_{\Omega}\left (u_\varepsilon-q\ln\frac{1}{\varepsilon}\right )_+^{p}qdx.
\end{equation}
Indeed, multiplying both sides of \eqref{eq1} by $ \phi=w^{\sigma, \tau}_\varepsilon q\in H^1_0(\Omega) $ and using integration by parts, we get
\begin{equation*}
\int_{\Omega}(K_H(x)\nabla u_\varepsilon| \nabla (w^{\sigma, \tau}_\varepsilon q))dx=\frac{1}{\varepsilon^2}\int_{\Omega}\left (u_\varepsilon-q\ln\frac{1}{\varepsilon}\right )_+^{p}w^{\sigma, \tau}_\varepsilon qdx.
\end{equation*}
Direct computations show that
\begin{equation*}
\frac{1}{\varepsilon^2}\int_{\Omega}\left (u_\varepsilon-q\ln\frac{1}{\varepsilon}\right )_+^{p}w^{\sigma, \tau}_\varepsilon qdx=\frac{1}{\varepsilon^2}\int_{\Omega}\left (u_\varepsilon-q\ln\frac{1}{\varepsilon}\right )_+^{p}qdx
\end{equation*}
and
\begin{equation*}
\begin{split}
&\int_{\Omega}(K_H(x)\nabla u_\varepsilon| \nabla (w^{\sigma, \tau}_\varepsilon q))dx=\int_{\Omega}\left (K_H(x)\nabla \left (u_\varepsilon-q\ln\frac{1}{\sigma}\right )| \nabla (w^{\sigma, \tau}_\varepsilon q)\right )dx\\
=&\ln\frac{\sigma}{\tau}\int_{\Omega}(K_H(x)\nabla (w^{\sigma, \tau}_\varepsilon q)|\nabla (w^{\sigma, \tau}_\varepsilon q)) dx+\int_{\Omega}\left (K_H(x)\nabla \left (u_\varepsilon-q\ln\frac{1}{\sigma}-\ln\frac{\sigma}{\tau}w^{\sigma, \tau}_\varepsilon q\right )| \nabla (w^{\sigma, \tau}_\varepsilon q)\right )dx\\
=&\ln\frac{\sigma}{\tau}\int_{\Omega}(K_H(x)\nabla (w^{\sigma, \tau}_\varepsilon q)|\nabla (w^{\sigma, \tau}_\varepsilon q)) dx+\int_{A_\varepsilon^\tau}\left (K_H(x)\nabla \left (u_\varepsilon-q\ln\frac{1}{\tau}\right )| \nabla q\right )dx\\
=&\ln\frac{\sigma}{\tau}\int_{\Omega}(K_H(x)\nabla (w^{\sigma, \tau}_\varepsilon q)|\nabla (w^{\sigma, \tau}_\varepsilon q)) dx\\
=&\ln\frac{\sigma}{\tau}\int_{\Omega}q^2(K_H(x)\nabla w^{\sigma, \tau}_\varepsilon|\nabla w^{\sigma, \tau}_\varepsilon)dx,
\end{split}
\end{equation*}
where we have used the assumption $ \mathcal{L}_Hq=0 $ and Lemma \ref{le02}. Thus we get \eqref{3-16}.

By the definition of capacity, \eqref{3-15} and \eqref{3-16}, we get
\begin{equation*}
\begin{split}
\text{cap}(A_\varepsilon^\tau, \Omega)\leq \int_{\Omega}|\nabla w^{1, \tau}_\varepsilon|^2dx
\leq \frac{1}{\inf_{\Omega}q^2\lambda_2}\int_{\Omega}q^2(K_H(x)\nabla w^{1, \tau}_\varepsilon|\nabla w^{1, \tau}_\varepsilon)dx
\leq \frac{C}{\ln\frac{1}{\tau}}.
\end{split}
\end{equation*}
Using the capacity estimates in \cite{DV} again, we get
\begin{equation*}
\frac{2\pi}{\ln16\left (1+\frac{2\text{dist}(A_\varepsilon^\tau, \partial \Omega)}{\text{diam}(A_\varepsilon^\tau)}\right )}\leq \text{cap}(A_\varepsilon^\tau, \Omega),
\end{equation*}
from which we deduce that,
\begin{equation}\label{3-17}
\frac{2\pi}{\ln16\left (1+\frac{2\text{dist}(A_\varepsilon^\tau, \partial \Omega)}{\text{diam}(A_\varepsilon^\tau)}\right )}\leq \frac{C}{\ln\frac{1}{\tau}}.
\end{equation}
So there exist constants $ C_1,C_2>0 $ independent of $ \varepsilon,\tau $, such that for any $ 0<\tau<1 $ and $ \varepsilon\leq \tau $,
$$ \text{diam}(A_\varepsilon^\tau)\leq C_1 \tau^{C_2}.$$  

We now claim that for any $\delta>0$, there exist $ \rho>0 $ and $ 0<\varepsilon_0<\rho $, such that for any $ \varepsilon\in(0,\varepsilon_0) $ and $ x,y\in A_\varepsilon^\rho $,
\begin{equation}\label{5001}
q(x)^2\leq q(y)^2(1+\delta),
\end{equation}
and
\begin{equation}\label{5002}
(K_H(x)\zeta|\zeta)\leq (1+\delta)(K_H(y)\zeta|\zeta),\ \ \ \ \forall\ \zeta\in\mathbb{R}^2.
\end{equation} 
Indeed, since $ \inf_{\Omega}q>0 $ and $ q\in C^{2}(\Omega)\cap C^{1}(\overline{\Omega}) $, it is easy to get \eqref{5001}. By $(\mathcal{K}2)$ and  the regularity of $ K_H $, one can also get \eqref{5002}.

Thus taking $ \sigma=\rho, \tau=\varepsilon $ in \eqref{3-16}, we get for any $ x_\varepsilon\in A_\varepsilon\subseteq A_\varepsilon^\rho $
\begin{equation}\label{3-19}
\begin{split}
\frac{1}{\varepsilon^2\ln\frac{\rho}{\varepsilon}}\int_{\Omega}\left (u_\varepsilon-q\ln\frac{1}{\varepsilon}\right )_+^{p}qdx=\int_{\Omega}q^2(K_H(x)\nabla w^{\rho, \varepsilon}_\varepsilon|\nabla w^{\rho, \varepsilon}_\varepsilon)dx\geq \frac{q^2(x_\varepsilon)}{(1+\delta)^2}\int_{\Omega}(K_H(x_\varepsilon)\nabla w^{\rho, \varepsilon}_\varepsilon|\nabla w^{\rho, \varepsilon}_\varepsilon)dx.
\end{split}
\end{equation}
Define a linear transformation matrix $ T_\varepsilon:\mathbb{R}^2\to\mathbb{R}^2 $ satisfying
\begin{equation*}
T_\varepsilon K_H(x_\varepsilon)T_\varepsilon^t=Id.
\end{equation*}
Then $ |det(T_\varepsilon)|=|det(K_H)(x_\varepsilon)|^{-\frac{1}{2}} $. Let $ \Omega'=T_\varepsilon(\Omega), A_\varepsilon'=T_\varepsilon(A_\varepsilon) $. For any $ y=T_\varepsilon(x)\in \Omega' $, define $ \bar{w}^{\rho, \varepsilon}_\varepsilon(y)=w^{\rho, \varepsilon}_\varepsilon(x)=w^{\rho, \varepsilon}_\varepsilon(T_\varepsilon^{-1}(y)) $. Since $ \bar{w}^{\rho, \varepsilon}_\varepsilon\equiv1 $ on $ A_\varepsilon' $ and $ \bar{w}^{\rho, \varepsilon}_\varepsilon\in H^1_0(\Omega') $, using the capacity estimates again we have
\begin{equation}\label{3-20}
\begin{split}
\int_{\Omega}(K_H(x_\varepsilon)\nabla w^{\rho, \varepsilon}_\varepsilon|\nabla w^{\rho, \varepsilon}_\varepsilon)dx=&\int_{\Omega'}(T_\varepsilon K_H(x_\varepsilon)T_\varepsilon^t \nabla \bar{w}^{\rho, \varepsilon}_\varepsilon|\nabla \bar{w}^{\rho, \varepsilon}_\varepsilon)|det(T_\varepsilon^{-1})|dy\\
=&\sqrt{det(K_H)}(x_\varepsilon)\int_{\Omega'}|\nabla \bar{w}^{\rho, \varepsilon}_\varepsilon|^2dy\\
\geq&\sqrt{det(K_H)}(x_\varepsilon)\cdot \text{cap}(A_\varepsilon', \Omega')\\
\geq&\sqrt{det(K_H)}(x_\varepsilon)\cdot \frac{2\pi}{\ln16\left (1+\frac{2\text{dist}(A_\varepsilon', \partial \Omega')}{\text{diam}(A_\varepsilon')}\right )}\\
\geq&\sqrt{det(K_H)}(x_\varepsilon)\cdot \frac{2\pi}{\ln16\left (1+\frac{C_0\text{dist}(A_\varepsilon, \partial \Omega)}{\text{diam}(A_\varepsilon)}\right )},
\end{split}
\end{equation}
for some $ C_0>0 $ independent of $ \varepsilon. $ Thus by \eqref{3-19} and \eqref{3-20}, we get
\begin{equation}\label{3-21}
q^2\sqrt{det(K_H)}(x_\varepsilon)\leq \frac{(1+\delta)^2}{2\pi\ln\frac{\rho}{\varepsilon}}\ln16\left (1+\frac{C_0\text{dist}(A_\varepsilon, \partial \Omega)}{\text{diam}(A_\varepsilon)}\right )\left( \frac{1}{\varepsilon^2}\int_{\Omega}\left (u_\varepsilon-q\ln\frac{1}{\varepsilon}\right )_+^{p}qdx\right).
\end{equation}
Taking the limit superior in both sides of \eqref{3-21}, using \eqref{3-15} and Proposition \ref{le301-upbdd}, we obtain that for any $ \delta>0 $ and $ x_\varepsilon\in A_\varepsilon $,
\begin{equation*}
\limsup_{\varepsilon\to 0}q^2\sqrt{det(K_H)}(x_\varepsilon)\leq\frac{(1+\delta)^2}{2\pi}\limsup_{\varepsilon\to 0}\frac{\ln16\left (1+\frac{C_0\text{dist}(A_\varepsilon, \partial \Omega)}{\text{diam}(A_\varepsilon)}\right )}{\ln\frac{\rho}{\varepsilon}} \cdot2\pi\min\limits_{x\in\overline{\Omega} }q^2\sqrt{det(K_H)}(x).
\end{equation*}
By Lemma \ref{lowbdd of diam} and $ \text{dist}(A_\varepsilon,\partial \Omega)\leq \text{diam}(\Omega) $, we get
\begin{equation*}
\limsup_{\varepsilon\to 0}\frac{\ln16\left (1+\frac{C_0\text{dist}(A_\varepsilon, \partial \Omega)}{\text{diam}(A_\varepsilon)}\right )}{\ln\frac{\rho}{\varepsilon}}\leq \limsup_{\varepsilon\to 0}\frac{\ln16\left (1+\frac{C}{\varepsilon}\right )}{\ln\frac{\rho}{\varepsilon}}=1.
\end{equation*}
Hence we have
\begin{equation*}
\limsup_{\varepsilon\to 0}q^2\sqrt{det(K_H)}(x_\varepsilon)\leq (1+\delta)^2\min\limits_{x\in\overline{\Omega} }q^2\sqrt{det(K_H)}(x).
\end{equation*}
By the arbitrariness of $ \delta>0 $, we conclude that for any $ x_\varepsilon\in A_\varepsilon, $ $ x_\varepsilon $ tends to $ \hat{x} $, where $ \hat{x} $ is a minimizer of $ q^2\sqrt{det(K_H)}. $

Taking the limit inferior in both sides of \eqref{3-21}, using \eqref{3-15}, and by the  the arbitrariness of $ \delta>0 $, we have
\begin{equation*}
\liminf_{\varepsilon\to 0}\frac{c_\varepsilon}{\ln\frac{1}{\varepsilon}}\geq \pi \min\limits_{\overline{\Omega} }q^2\sqrt{det(K_H)}.
\end{equation*}
Combining this with Proposition \ref{le301-upbdd}, we get \eqref{3-14}.
 The proof is thus complete.

\end{proof}

We can then get estimates of the diameter of $ A_\varepsilon $ as follows.
\begin{lemma}\label{diam}
There holds
\begin{equation}\label{3-22}
\lim_{\varepsilon\to 0}\frac{\ln\frac{\text{dist}(A_\varepsilon, \partial \Omega)}{\text{diam}(A_\varepsilon)}}{\ln\frac{1}{\varepsilon}}=1.
\end{equation}
\end{lemma}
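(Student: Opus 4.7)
The plan is to sandwich the ratio $\text{dist}(A_\varepsilon,\partial\Omega)/\text{diam}(A_\varepsilon)$ between two quantities whose logarithms, divided by $\ln(1/\varepsilon)$, both tend to $1$.

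For the upper bound ($\limsup\le 1$), I would argue as follows. Since $\Omega$ is bounded, $\text{dist}(A_\varepsilon,\partial\Omega)\le \text{diam}(\Omega)=:M<\infty$. On the other hand, Lemma \ref{lowbdd of diam} gives $\text{diam}(A_\varepsilon)\ge R_1\varepsilon$. Therefore
\begin{equation*}
\ln\frac{\text{dist}(A_\varepsilon,\partial\Omega)}{\text{diam}(A_\varepsilon)}\le \ln\frac{M}{R_1\varepsilon}=\ln\frac{1}{\varepsilon}+O(1),
\end{equation*}
which upon dividing by $\ln(1/\varepsilon)$ and taking $\varepsilon\to 0$ gives $\limsup\le 1$.

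For the lower bound ($\liminf\ge 1$), I would revisit the capacity inequality \eqref{3-21} already established inside the proof of Proposition \ref{loca}: for any $\delta>0$, for all $\varepsilon$ small and any $x_\varepsilon\in A_\varepsilon$,
\begin{equation*}
q^2\sqrt{\det(K_H)}(x_\varepsilon)\le \frac{(1+\delta)^2}{2\pi\ln\frac{\rho}{\varepsilon}}\ln 16\!\left(1+\frac{C_0\,\text{dist}(A_\varepsilon,\partial\Omega)}{\text{diam}(A_\varepsilon)}\right)\!\left(\frac{1}{\varepsilon^2}\int_\Omega \!\left(u_\varepsilon-q\ln\tfrac{1}{\varepsilon}\right)_+^{p}q\,dx\right).
\end{equation*}
By \eqref{3-15}, the last factor equals $2c_\varepsilon/\ln(1/\varepsilon)+O(1/\ln(1/\varepsilon))$, which by Proposition \ref{loca} tends to $2\pi\min_{\overline\Omega}q^2\sqrt{\det K_H}$. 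Also by Proposition \ref{loca}, picking $x_\varepsilon\in A_\varepsilon$ we have $q^2\sqrt{\det K_H}(x_\varepsilon)\to\min_{\overline\Omega}q^2\sqrt{\det K_H}>0$. Dividing both sides by $\min q^2\sqrt{\det K_H}$, rearranging, and passing to the limit yields
\begin{equation*}
\liminf_{\varepsilon\to 0}\frac{\ln\!\left(1+\frac{C_0\,\text{dist}(A_\varepsilon,\partial\Omega)}{\text{diam}(A_\varepsilon)}\right)}{\ln\frac{1}{\varepsilon}}\ge \frac{1}{(1+\delta)^2}.
\end{equation*}
Since $\text{dist}(A_\varepsilon,\partial\Omega)/\text{diam}(A_\varepsilon)\to +\infty$ by Proposition \ref{connect}, the constant $C_0$ and the additive $1$ are harmless inside the $\ln$; hence the same bound holds with $\text{dist}/\text{diam}$ in place of $1+C_0\text{dist}/\text{diam}$. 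Letting $\delta\to 0$ gives $\liminf\ge 1$, completing the proof.

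The only genuinely non-trivial point is the lower bound, and its substance has already been done in establishing \eqref{3-21} and Proposition \ref{loca}; here it is just a matter of combining them with the sharp asymptotics $c_\varepsilon\sim \pi\min q^2\sqrt{\det K_H}\ln(1/\varepsilon)$. The upper bound is essentially immediate once Lemma \ref{lowbdd of diam} is in hand, and it is a pleasant sanity check that both sides of the asserted identity are consistent with the expected scaling $\text{diam}(A_\varepsilon)\sim\varepsilon$ near an interior concentration point.
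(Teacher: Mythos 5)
Your proposal is correct and follows essentially the same route as the paper: the upper bound comes from $\text{diam}(A_\varepsilon)\ge R_1\varepsilon$ (Lemma \ref{lowbdd of diam}) together with the boundedness of $\Omega$, and the lower bound comes from taking the limit inferior in the capacity inequality \eqref{3-21}, combined with \eqref{3-15}, the sharp asymptotics of $c_\varepsilon$, and the convergence of $x_\varepsilon$ to a minimizer of $q^2\sqrt{\det(K_H)}$ from Proposition \ref{loca}. The only cosmetic difference is that you invoke the two-sided limit \eqref{3-14} where the paper cites the upper bound of Proposition \ref{le301-upbdd} directly, which amounts to the same thing.
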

\begin{proof}
On the one hand, by Lemma \ref{lowbdd of diam}, we have
\begin{equation*}
\limsup_{\varepsilon\to 0}\frac{\ln\frac{\text{dist}(A_\varepsilon, \partial \Omega)}{\text{diam}(A_\varepsilon)}}{\ln\frac{1}{\varepsilon}}\leq \limsup_{\varepsilon\to 0}\frac{\ln\frac{C}{\varepsilon}}{\ln\frac{1}{\varepsilon}}=1.
\end{equation*}
On the other hand, taking the limit inferior in both sides of \eqref{3-21} and using \eqref{3-15} and Proposition \ref{le301-upbdd}, we get
\begin{equation*}
\begin{split}
&\frac{(1+\delta)^2}{2\pi}\liminf_{\varepsilon\to 0}\frac{\ln16\left (1+\frac{C_0\text{dist}(A_\varepsilon, \partial \Omega)}{\text{diam}(A_\varepsilon)}\right )}{\ln\frac{\rho}{\varepsilon}} \cdot2\pi\min\limits_{x\in\overline{\Omega} }q^2\sqrt{det(K_H)}(x)\\
\geq& \liminf_{\varepsilon\to 0}q^2\sqrt{det(K_H)}(x_\varepsilon)\\
=& \min\limits_{\overline{\Omega} }q^2\sqrt{det(K_H)},
\end{split}
\end{equation*}
which implies that
\begin{equation*}
\frac{1}{(1+\delta)^2}\leq \liminf_{\varepsilon\to 0}\frac{\ln16\left (1+\frac{C_0\text{dist}(A_\varepsilon, \partial \Omega)}{\text{diam}(A_\varepsilon)}\right )}{\ln\frac{\rho}{\varepsilon}}=\liminf_{\varepsilon\to 0}\frac{\ln\frac{\text{dist}(A_\varepsilon, \partial \Omega)}{\text{diam}(A_\varepsilon)}}{\ln\frac{1}{\varepsilon}}.
\end{equation*}
By the arbitrariness of $ \delta>0, $ we have $ \liminf\limits_{\varepsilon\to 0}\frac{\ln\frac{\text{dist}(A_\varepsilon, \partial \Omega)}{\text{diam}(A_\varepsilon)}}{\ln\frac{1}{\varepsilon}}\geq 1. $ The proof is thus complete.

\end{proof}

\begin{remark}
A direct consequence of Lemmas \ref{diam} and \ref{lowbdd of diam} is that for any $\alpha\in (0,1), $ there exists $ C_1,C_2>0 $ such that
\begin{equation*}
C_1\varepsilon\leq \text{diam}(A_\varepsilon) \leq C_2\varepsilon^\alpha.
\end{equation*}
When the limiting location $ \bar{x} $ of $ A_\varepsilon $ is on the boundary of $ \Omega $, such an estimate is optimal. Similar results have been found for 2D Euler equations and 3D axisymmetric equations, see \cite{DV,LYY, SV} for example. However when $ \bar{x} \in \Omega $, we can improve estimates of $ A_\varepsilon $.
\end{remark}

By Proposition \ref{loca}, we show that the limiting location of $ A_\varepsilon $ is $ x^* $, where $ q^2\sqrt{det(K_H)}(x^*)=\min\limits_{\overline{\Omega} }q^2\sqrt{det(K_H)}. $ Note that  $ \kappa(w_\varepsilon)=\frac{1}{\varepsilon^2}\int_{\Omega}\left (u_\varepsilon-q\ln\frac{1}{\varepsilon}\right )^{p}_+dx $ is the circulation of $ w_\varepsilon=\frac{1}{\varepsilon^2}\left (u_\varepsilon-q\ln\frac{1}{\varepsilon}\right )^{p}_+. $ The limit of $ \kappa(w_\varepsilon) $ can be obtained as follows.
\begin{lemma}\label{circulation}
	There holds
	\begin{equation*}
	\lim_{\varepsilon\to 0}\kappa(w_\varepsilon)=2\pi q\sqrt{det(K_H)}(x^*).
	\end{equation*}
\end{lemma}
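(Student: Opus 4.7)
The plan is to combine the identity \eqref{3-15} (which already evaluates $\frac{1}{\varepsilon^2}\int_{A_\varepsilon}(u_\varepsilon-q\ln\frac{1}{\varepsilon})^p q\,dx$ in terms of $c_\varepsilon$) with the asymptotic location of $A_\varepsilon$ established in Proposition \ref{loca}. Since $q$ is continuous and $A_\varepsilon$ shrinks to $x^*$, the factor $q$ inside the integral can be replaced by $q(x^*)$ up to a vanishing error, which converts the weighted integral into the circulation $\kappa(w_\varepsilon)$.

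More concretely, I would first quote \eqref{3-15}, which reads
\begin{equation*}
\frac{1}{\varepsilon^2}\int_{A_\varepsilon}\left(u_\varepsilon-q\ln\tfrac{1}{\varepsilon}\right)^{p}q\,dx=\frac{2c_\varepsilon}{\ln\frac{1}{\varepsilon}}+O\!\left(\frac{1}{\ln\frac{1}{\varepsilon}}\right),
\end{equation*}
and then invoke the limit \eqref{3-14} from Proposition \ref{loca}, namely $c_\varepsilon/\ln\frac{1}{\varepsilon}\to \pi q^2(x^*)\sqrt{\det(K_H(x^*))}$. This yields
\begin{equation*}
\lim_{\varepsilon\to 0}\frac{1}{\varepsilon^2}\int_{A_\varepsilon}\left(u_\varepsilon-q\ln\tfrac{1}{\varepsilon}\right)^{p}q\,dx=2\pi q^2(x^*)\sqrt{\det(K_H(x^*))}.
\end{equation*}

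Next I would compare this weighted integral with $\kappa(w_\varepsilon)=\frac{1}{\varepsilon^2}\int_{A_\varepsilon}(u_\varepsilon-q\ln\frac{1}{\varepsilon})_+^{p}\,dx$. By Proposition \ref{connect}, $\mathrm{diam}(A_\varepsilon)\to 0$, and by Proposition \ref{loca} the set $A_\varepsilon$ converges to $\{x^*\}$. Since $q\in C^1(\overline{\Omega})$ with $q(x^*)>0$, for every $\delta>0$ there is $\varepsilon_0$ such that $(1-\delta)q(x^*)\le q(x)\le (1+\delta)q(x^*)$ for all $x\in A_\varepsilon$ whenever $\varepsilon<\varepsilon_0$. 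Plugging these bounds into the weighted integral gives
\begin{equation*}
(1-\delta)q(x^*)\,\kappa(w_\varepsilon)\le\frac{1}{\varepsilon^2}\int_{A_\varepsilon}\!\!\left(u_\varepsilon-q\ln\tfrac{1}{\varepsilon}\right)^{p}q\,dx\le (1+\delta)q(x^*)\,\kappa(w_\varepsilon).
\end{equation*}
Letting $\varepsilon\to 0$ and then $\delta\to 0$, one concludes $\kappa(w_\varepsilon)\to 2\pi q^2(x^*)\sqrt{\det(K_H(x^*))}/q(x^*)=2\pi q(x^*)\sqrt{\det(K_H(x^*))}$, which is the claimed limit.

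There is essentially no serious obstacle here: the hard analytic work (the sharp two-sided asymptotics $c_\varepsilon/\ln\frac{1}{\varepsilon}\to\pi q^2\sqrt{\det(K_H)}(x^*)$ and the concentration of $A_\varepsilon$ at $x^*$) has already been done in Propositions \ref{connect} and \ref{loca}. The only point to handle with a little care is ensuring that the weighted and unweighted integrals differ by a factor tending to $q(x^*)$; since $q$ is continuous and strictly positive on a neighborhood of $x^*$ (and $A_\varepsilon$ shrinks into that neighborhood), this step is immediate. The argument applies uniformly to each of the possibly multiple minimizers of $q^2\sqrt{\det(K_H)}$, because $A_\varepsilon$ concentrates near a single one of them and the product $q\cdot q\sqrt{\det(K_H)}/q=q\sqrt{\det(K_H)}$ takes the same limiting value regardless of which minimizer is selected.
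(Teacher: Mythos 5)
Your proposal is correct and follows essentially the same route as the paper: both rest on the identity \eqref{3-15} (itself a consequence of \eqref{3-12}, \eqref{3-13} and Lemma \ref{bdd of core}), the sharp limit $c_\varepsilon/\ln\frac{1}{\varepsilon}\to\pi q^2\sqrt{det(K_H)}(x^*)$ from Proposition \ref{loca}, and the replacement of $q$ by $q(x^*)$ on the shrinking set $A_\varepsilon$. The only difference is cosmetic: you spell out the $(1\pm\delta)$ sandwich for the factor $q$, which the paper compresses into the phrase ``by Proposition \ref{loca} and the fact that $A_\varepsilon\to x^*$''.
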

\begin{proof}

It follows from \eqref{3-13}, \eqref{3-12} and the definition of $ c_\varepsilon $ that
\begin{equation}\label{3-23}
\begin{split}
\frac{\ln\frac{1}{\varepsilon}}{\varepsilon^2}\int_{A_\varepsilon}\left (u_\varepsilon-q\ln\frac{1}{\varepsilon}\right )^{p}qdx=&\int_{\Omega}(K_H\nabla u_\varepsilon| \nabla u_\varepsilon)dx-\int_{A_\varepsilon}\left (K_H\nabla \left (u_\varepsilon-q\ln\frac{1}{\varepsilon}\right )| \nabla \left (u_\varepsilon-q\ln\frac{1}{\varepsilon}\right )\right )dx\\
=&2c_\varepsilon-\frac{p-1}{(p+1)\varepsilon^2}\int_{A_\varepsilon}\left (u_\varepsilon-q\ln\frac{1}{\varepsilon}\right )^{p+1}dx.
\end{split}
\end{equation}
By Lemma \ref{bdd of core}, we have
\begin{equation*}
\frac{1}{\varepsilon^2}\int_{A_\varepsilon}\left (u_\varepsilon-q\ln\frac{1}{\varepsilon}\right )^{p+1}dx\leq C.
\end{equation*}	
So $ \frac{1}{\varepsilon^2}\int_{A_\varepsilon}\left (u_\varepsilon-q\ln\frac{1}{\varepsilon}\right )^{p}qdx=\frac{2c_\varepsilon}{\ln\frac{1}{\varepsilon}}+O\left (\frac{1}{\ln\frac{1}{\varepsilon}}\right )$. By Proposition \ref{loca} and the fact that $ A_\varepsilon\to x^* $, we get
\begin{equation*}
	\lim_{\varepsilon\to 0}\kappa(w_\varepsilon)=q(x^*)^{-1}\lim_{\varepsilon\to 0}\frac{1}{\varepsilon^2}\int_{A_\varepsilon}\left (u_\varepsilon-q\ln\frac{1}{\varepsilon}\right )^{p}qdx=q(x^*)^{-1} \lim_{\varepsilon\to 0}\frac{2c_\varepsilon}{\ln\frac{1}{\varepsilon}}=2\pi q(x^*)\sqrt{det(K_H(x^*))}.
	\end{equation*}
	
\end{proof}

\subsection{Further analysis when the limiting location of $ A_\varepsilon $ is in $  \Omega $}
When  the limiting location of $ A_\varepsilon $ is in  $  \Omega $, we can improve the results in Proposition \ref{loca} and Lemma \ref{diam} by giving more accurate estimates of lower bound of $ c_\varepsilon $ and upper bound of the diameter of $ A_\varepsilon $. Indeed, we have
\begin{proposition}\label{better esti}
If for any $ x_\varepsilon\in A_\varepsilon $, $ \lim_{\varepsilon\to 0}x_\varepsilon=x^*\in \Omega, $ then
\begin{equation}\label{3-27}
c_\varepsilon=I_\varepsilon(u_\varepsilon)=\pi \min\limits_{\overline{\Omega} }q^2\sqrt{det(K_H)}\ln\frac{1}{\varepsilon}+O(1).
\end{equation}
Moreover, there exist $ R_1,R_2>0 $ such that
\begin{equation*}
R_1\varepsilon\leq \text{diam}(A_\varepsilon)\leq R_2\varepsilon.
\end{equation*}
\end{proposition}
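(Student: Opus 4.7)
The plan has three ingredients, each enabled by the interior hypothesis $x^{\ast}\in\Omega$: a sharp upper bound on $c_{\varepsilon}$ from the existing test-function analysis, a blow-up argument pinning the diameter of $A_{\varepsilon}$ at order $\varepsilon$, and a feedback step turning the capacity inequality \eqref{3-21} into a matching lower bound. Set $M=\min_{\overline{\Omega}}q^{2}\sqrt{\det K_{H}}$, $v_{\varepsilon}=u_{\varepsilon}-q\ln(1/\varepsilon)$, and $E_{\varepsilon}=\varepsilon^{-2}\int_{\Omega}(v_{\varepsilon})_{+}^{p}\,q\,dx$. The upper bound $c_{\varepsilon}\leq \pi M\ln(1/\varepsilon)+O(1)$ is already the sharper second clause of Proposition \ref{le301-upbdd}, valid precisely because $x^{\ast}\in\Omega$; through identity \eqref{3-23} and Lemma \ref{bdd of core} it propagates to $E_{\varepsilon}\leq 2\pi M+O(1/\ln(1/\varepsilon))$.

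For the sharp diameter bound $\mathrm{diam}(A_{\varepsilon})\leq R_{2}\varepsilon$ I would use a blow-up argument. Fix $x_{\varepsilon}\in A_{\varepsilon}$ at which $v_{\varepsilon}$ attains its maximum on $A_\varepsilon$, and set $\tilde V_{\varepsilon}(y)=v_{\varepsilon}(x_{\varepsilon}+\varepsilon y)$ on $(\Omega-x_{\varepsilon})/\varepsilon$. Using $\mathcal{L}_{H}q=0$, the rescaled function satisfies $-\mathrm{div}_{y}\bigl(K_{H}(x_{\varepsilon}+\varepsilon y)\nabla_{y}\tilde V_{\varepsilon}\bigr)=(\tilde V_{\varepsilon})_{+}^{p}$. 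Lemma \ref{bdd of core} rescales to a uniform bound on $\int(K_{H}\nabla\tilde V_{\varepsilon}|\nabla\tilde V_{\varepsilon})\,dy$, and Moser iteration on this divergence-form uniformly elliptic equation yields a uniform $L^{\infty}$-bound on $\tilde V_{\varepsilon}$. A subsequence converges in $C^{1,\alpha}_{\mathrm{loc}}$ to $\tilde V\geq 0$ at the origin solving $-\mathrm{div}\bigl(K_{H}(x^{\ast})\nabla\tilde V\bigr)=\tilde V_{+}^{p}$ on $\mathbb{R}^{2}$. A linear change of variables diagonalizing $K_{H}(x^{\ast})$ turns this into $-\Delta\tilde V=\tilde V_{+}^{p}$ on $\mathbb{R}^{2}$. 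Suppose $\mathrm{diam}(A_{\varepsilon})/\varepsilon\to\infty$ along a subsequence; then $\{\tilde V>0\}$ is unbounded, yet the rescaled Dirichlet energy and $L^{p+1}$-mass on $\{\tilde V>0\}$ are finite. A Pohozaev-type identity on $\{\tilde V>0\}$ together with the uniqueness/radial symmetry of finite-energy compactly supported positive solutions of $-\Delta U=U^{p}$ on $\mathbb{R}^{2}$ then forces the positive set to be bounded, the desired contradiction.

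With $R_{1}\varepsilon\leq\mathrm{diam}(A_{\varepsilon})\leq R_{2}\varepsilon$ and $\mathrm{dist}(A_{\varepsilon},\partial\Omega)\geq d^{\ast}/2$ for $d^{\ast}=\mathrm{dist}(x^{\ast},\partial\Omega)>0$, the oscillations of $q^{2}$ and $K_{H}$ over any neighborhood of $A_{\varepsilon}$ shrinking to $\{x^{\ast}\}$ are $O(\varepsilon)$, and Taylor expansion at the interior minimum (where $\nabla(q^{2}\sqrt{\det K_{H}})(x^{\ast})=0$) yields $q^{2}\sqrt{\det K_{H}}(x_{\varepsilon})=M+O(\varepsilon^{2})$. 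Running the capacity argument of Proposition \ref{loca} with $\delta_{\varepsilon}=O(\varepsilon)$ in \eqref{5001}--\eqref{5002}, the prefactor $\ln16(1+C_{0}\mathrm{dist}/\mathrm{diam})$ becomes $\ln(1/\varepsilon)+O(1)$, and \eqref{3-21} rearranges to $E_{\varepsilon}\geq 2\pi M-O(1/\ln(1/\varepsilon))$. Substituting back into \eqref{3-23} gives $c_{\varepsilon}=\tfrac{\ln(1/\varepsilon)}{2}E_{\varepsilon}+O(1)\geq\pi M\ln(1/\varepsilon)-O(1)$, which combined with the upper bound establishes \eqref{3-27}.

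The main obstacle is the blow-up step, specifically non-degeneracy of the limit $\tilde V$ and the Liouville-type rigidity for $-\Delta U=U_{+}^{p}$ on $\mathbb{R}^{2}$. Non-degeneracy is secured by choosing $x_{\varepsilon}$ at the maximum of $v_{\varepsilon}$, so $\tilde V_{\varepsilon}(0)=\|v_{\varepsilon}\|_{L^{\infty}(A_{\varepsilon})}$, which Moser iteration on the PDE together with the sharp asymptotics of $E_{\varepsilon}$ forces to be bounded below by a uniform positive constant. The rigidity that $\{\tilde V>0\}$ be bounded rules out spreading-to-infinity and multi-bubble failure modes, and is where the supercritical exponent $p>1$ enters crucially to yield compactness of the positive set from finite $L^{p+1}$-mass.
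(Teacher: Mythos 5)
Your overall architecture (sharp upper bound on $c_\varepsilon$ from Proposition \ref{le301-upbdd}, then a diameter bound of order $\varepsilon$, then a feedback into the capacity inequality for the matching lower bound) is the right shape, but both of your key steps have genuine gaps, and the paper closes them by a quite different device. First, the lower-bound step: you claim that because $\mathrm{diam}(A_\varepsilon)=O(\varepsilon)$, you may run the argument of Proposition \ref{loca} with $\delta_\varepsilon=O(\varepsilon)$ in \eqref{5001}--\eqref{5002}. But the comparison of $q^2K_H$ with its value at $x_\varepsilon$ in \eqref{3-19} must hold on the whole support of the capacitor $w^{\rho,\varepsilon}_\varepsilon$, which is $\overline{A_\varepsilon^{\rho}}$, a set of diameter only $O(\rho^{\alpha_0})$ for \emph{fixed} $\rho$ --- not $O(\varepsilon)$. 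A single application of the capacity inequality therefore produces a multiplicative error $(1+O(\rho^{\alpha_0}))$, which after multiplying by $\ln\frac{1}{\varepsilon}$ contributes $O(\rho^{\alpha_0}\ln\frac{1}{\varepsilon})$ to $c_\varepsilon$; letting $\rho=\rho(\varepsilon)\to0$ cannot repair this without degrading $\ln\frac{\rho}{\varepsilon}$ below $\ln\frac{1}{\varepsilon}+O(1)$. This is exactly why the paper slices $[\varepsilon,\rho]$ into levels $\tau_1<\sigma_1=\tau_2<\cdots<\sigma_k=\rho$, applies the identity \eqref{3-16} on each annular layer $A_\varepsilon^{\sigma_j}\setminus A_\varepsilon^{\tau_j}$ where the oscillation is controlled by a Dini modulus $\gamma(C_1\sigma_j^{\alpha_0})$, and sums: the accumulated error becomes the convergent integral $\int_\varepsilon^\rho\gamma(C_1\sigma^{\alpha_0})\,\frac{d\sigma}{\sigma}<+\infty$, yielding the refined inequality \eqref{3-25} with a genuinely $O(1)$ additive cost. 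Without this (or an equivalent telescoping), your lower bound is off by at least $O(\ln\ln\frac{1}{\varepsilon})$. Note also that in the paper the upper diameter bound $\mathrm{diam}(A_\varepsilon)\le R_2\varepsilon$ is itself a \emph{consequence} of this refined inequality combined with the sharp upper bound on $c_\varepsilon$, so no blow-up analysis is needed at all.

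Second, your blow-up route to $\mathrm{diam}(A_\varepsilon)\le R_2\varepsilon$ is not closed. The rigidity statement you invoke --- that a finite-energy solution of $-\Delta \tilde V=\tilde V_+^p$ on $\mathbb{R}^2$ must have bounded positive set --- does not follow from ``uniqueness/radial symmetry of compactly supported positive solutions''; that classification only speaks about solutions already known to have compact positive set, and ruling out unbounded positive sets is a separate Liouville-type theorem you neither state nor prove. Moreover, even granting it, $C^{1,\alpha}_{\mathrm{loc}}$ convergence of $\tilde V_\varepsilon$ controls only compact sets in the rescaled variable: the connected set $\{\tilde V_\varepsilon>0\}$ could reach out to rescaled distance $R_\varepsilon\to\infty$ along thin tentacles that disappear in the local limit, so ``the limit has bounded positive set'' does not by itself yield $\mathrm{diam}(A_\varepsilon)\le R_2\varepsilon$. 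You flag this step as the main obstacle, correctly --- but it remains an obstacle, and since the paper obtains the same conclusion from the (already needed) refined capacity estimate plus Proposition \ref{le301-upbdd}, the blow-up machinery is both incomplete and unnecessary here.
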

\begin{proof}
By \eqref{3-17}, we get for any $ 0<\tau<1 $ and $ \varepsilon\leq \tau $,
\begin{equation*}
\frac{2\pi}{\ln16\left (1+\frac{2\text{dist}(A_\varepsilon^\tau, \partial \Omega)}{\text{diam}(A_\varepsilon^\tau)}\right )}\leq \frac{C}{\ln\frac{1}{\tau}}.
\end{equation*}
So there exist $ C_0, \alpha_0>0 $ such that  $ \frac{\text{dist}(A_\varepsilon^\tau, \partial \Omega)}{\text{diam}(A_\varepsilon^\tau)}\geq \frac{C_0}{\tau^{\alpha_0}}$, which implies that $ \text{diam}(A_\varepsilon^\tau)\leq C_1\tau^{\alpha_0} $ for some $ C_1>0. $ That is, for any $ x,y \in A_\varepsilon^\tau $, $ |x-y|\leq C_1\tau^{\alpha_0} $.

By $ q\in C^{2}(\Omega)\cap C^{1}(\overline{\Omega}), (K_H)_{ij}\in C^{\infty}(\Omega)$ for $i,j=1,2$,  $ \inf_{\Omega}q>0 $ and $(\mathcal{K}2)$, similarly to the proof of \eqref{5001} and \eqref{5002}, we can get that $ q^2K_{H} $ is  Dini-continuous uniformly in $ \Omega $, which means that,  there exists a non-negative function $ \gamma(s), $ such that $ \int_0^{s_0}\frac{\gamma(s)}{s}ds<+\infty $ for some $ s_0>0 $ and
\begin{equation}\label{5003}
q^2(x)(K_H(x)\zeta|\zeta)\leq (1+\gamma(|x-y|))q^2(y)(K_H(y)\zeta|\zeta),\ \ \ \ \forall\ x,y\in\Omega,\ \ \zeta\in\mathbb{R}^2.
\end{equation}
Thus by \eqref{3-16} and \eqref{5003}, we get for any $ 0<\tau<\sigma<1 $, $ \varepsilon\leq \tau $ and $ x_\varepsilon\in A_\varepsilon\subseteq A_\varepsilon^\sigma $,
\begin{equation*}
\begin{split}
&\frac{1}{\varepsilon^2}\int_{\Omega}\left (u_\varepsilon-q\ln\frac{1}{\varepsilon}\right )_+^{p}qdx=\ln\frac{\sigma}{\tau}\int_{\Omega}q^2(K_H(x)\nabla w^{\sigma, \tau}_\varepsilon|\nabla w^{\sigma, \tau}_\varepsilon)dx\\
\geq &\ln\frac{\sigma}{\tau}\frac{1}{1+\gamma(C_1\sigma^{\alpha_0})}\int_{\Omega}q^2(x_\varepsilon)(K_H(x_\varepsilon)\nabla w^{\sigma, \tau}_\varepsilon|\nabla w^{\sigma, \tau}_\varepsilon)dx,
\end{split}
\end{equation*}
which implies that
\begin{equation}\label{3-24}
\left (\ln\frac{\sigma}{\tau}\right )^2\int_{\Omega}q^2(x_\varepsilon)(K_H(x_\varepsilon)\nabla w^{\sigma, \tau}_\varepsilon|\nabla w^{\sigma, \tau}_\varepsilon)dx\leq \ln\frac{\sigma}{\tau}(1+\gamma(C_1\sigma^{\alpha_0}))\frac{1}{\varepsilon^2}\int_{\Omega}\left (u_\varepsilon-q\ln\frac{1}{\varepsilon}\right )_+^{p}qdx.
\end{equation}
Taking $ \varepsilon=\tau_1<\sigma_1=\tau_2<\sigma_2=\tau_3<\cdots<\sigma_k=\rho $, and summing \eqref{3-24} over $ \{j=1,2,\cdots,k\} $, we get for any $ x_\varepsilon\in A_\varepsilon $
\begin{equation*}
\left (\ln\frac{\rho}{\varepsilon}\right )^2\int_{\Omega}q^2(x_\varepsilon)(K_H(x_\varepsilon)\nabla w^{\rho, \varepsilon}_\varepsilon|\nabla w^{\rho, \varepsilon}_\varepsilon)dx\leq \left (\ln\frac{\rho}{\varepsilon}+\sum_{j=1}^{k}\gamma(C_1\sigma_j^{\alpha_0})\ln\frac{\sigma_j}{\tau_j}\right )\frac{1}{\varepsilon^2}\int_{\Omega}\left (u_\varepsilon-q\ln\frac{1}{\varepsilon}\right )_+^{p}qdx.
\end{equation*}
By taking the limit of Riemann sums in the above inequality, we have
\begin{equation*}
\begin{split}
\left (\ln\frac{\rho}{\varepsilon}\right )^2\int_{\Omega}q^2(x_\varepsilon)(K_H(x_\varepsilon)\nabla w^{\rho, \varepsilon}_\varepsilon|\nabla w^{\rho, \varepsilon}_\varepsilon)dx\leq\left (\ln\frac{\rho}{\varepsilon}+\int_\varepsilon^\rho\frac{\gamma(C_1\sigma^{\alpha_0})}{\sigma}d\sigma\right )\frac{1}{\varepsilon^2}\int_{\Omega}\left (u_\varepsilon-q\ln\frac{1}{\varepsilon}\right )_+^{p}qdx.
\end{split}
\end{equation*}
Since
\begin{equation*}
\begin{split}
\int_\varepsilon^\rho\frac{\gamma(C_1\sigma^{\alpha_0})}{\sigma}d\sigma=\int_{\varepsilon^{\alpha_0}}^{\rho^{\alpha_0}}\frac{\gamma(C_1\sigma')}{\sigma'^{\frac{1}{\alpha_0}}}\frac{1}{\alpha_0}\sigma'^{\frac{1}{\alpha_0}-1}d\sigma'=\frac{1}{\alpha_0}\int_{C_1\varepsilon^{\alpha_0}}^{C_1\rho^{\alpha_0}}\frac{\gamma(\sigma')}{\sigma'}d\sigma'<+\infty,
\end{split}
\end{equation*}
we get
\begin{equation}\label{3-25}
\int_{\Omega}q^2(x_\varepsilon)(K_H(x_\varepsilon)\nabla w^{\rho, \varepsilon}_\varepsilon|\nabla w^{\rho, \varepsilon}_\varepsilon)dx\leq\left (\frac{1}{\ln\frac{\rho}{\varepsilon}}+\frac{C}{\left (\ln\frac{\rho}{\varepsilon}\right )^2}\right )\frac{1}{\varepsilon^2}\int_{\Omega}\left (u_\varepsilon-q\ln\frac{1}{\varepsilon}\right )_+^{p}qdx,
\end{equation}
which is  the refined version of \eqref{3-19}. So repeating the proof of Proposition \ref{loca}, we have
\begin{equation}\label{3-26}
q^2\sqrt{det(K_H)}(x_\varepsilon)\leq \frac{\ln16\left (1+\frac{C_0\text{dist}(A_\varepsilon, \partial \Omega)}{\text{diam}(A_\varepsilon)}\right )}{2\pi}\left (\frac{1}{\ln\frac{\rho}{\varepsilon}}+\frac{C}{\left (\ln\frac{\rho}{\varepsilon}\right )^2}\right )\left( \frac{1}{\varepsilon^2}\int_{\Omega}\left (u_\varepsilon-q\ln\frac{1}{\varepsilon}\right )_+^{p}qdx\right),
\end{equation}
which improves \eqref{3-21}.

Thus, taking \eqref{3-15} into \eqref{3-26} and using Proposition \ref{le301-upbdd}, we obtain
\begin{equation*}
\begin{split}
\min_{\overline{\Omega}}q^2\sqrt{det(K_H)}\ln\frac{1}{\varepsilon}\leq& \frac{\ln16\left (1+\frac{C_0\text{dist}(A_\varepsilon, \partial \Omega)}{\text{diam}(A_\varepsilon)}\right )}{2\pi}\left (\frac{1}{\ln\frac{\rho}{\varepsilon}}+\frac{C}{\left (\ln\frac{\rho}{\varepsilon}\right )^2}\right )(2c_\varepsilon+O(1))\\
\leq &\frac{\ln16\left (1+\frac{C_0\text{dist}(A_\varepsilon, \partial \Omega)}{\text{diam}(A_\varepsilon)}\right )}{2\pi}\left (\frac{1}{\ln\frac{\rho}{\varepsilon}}+\frac{C}{\left (\ln\frac{\rho}{\varepsilon}\right )^2}\right )\left (2\pi \min_{\overline{\Omega}}q^2\sqrt{det(K_H)}\ln\frac{1}{\varepsilon}+O(1)\right ).
\end{split}
\end{equation*}
Direct computation shows that
\begin{equation*}
\ln\frac{C_0\text{dist}(A_\varepsilon, \partial \Omega)}{\text{diam}(A_\varepsilon)}\geq \ln\frac{1}{\varepsilon}+O(1),
\end{equation*}
which implies that $ \text{diam}(A_\varepsilon)\leq R_2\varepsilon $ for some $ R_2>0. $

Finally, by taking $ \text{diam}(A_\varepsilon)\geq R_1\varepsilon $ (see Lemma \ref{lowbdd of diam}) into \eqref{3-26} and using \eqref{3-15}, we get
\begin{equation*}
\begin{split}
c_\varepsilon\geq& \pi\min_{\overline{\Omega}}q^2\sqrt{det(K_H)}\frac{1}{\ln16\left (1+\frac{C_0\text{dist}(A_\varepsilon, \partial \Omega)}{\text{diam}(A_\varepsilon)}\right ) }\frac{1}{\left (\frac{1}{\ln\frac{\rho}{\varepsilon}}+\frac{C}{\left (\ln\frac{\rho}{\varepsilon}\right )^2}\right )}\ln\frac{1}{\varepsilon}+O(1)\\
\geq&\pi\min_{\overline{\Omega}}q^2\sqrt{det(K_H)}\frac{1}{\ln\frac{C}{\varepsilon}}\frac{1}{\left (\frac{1}{\ln\frac{\rho}{\varepsilon}}+\frac{C}{\left (\ln\frac{\rho}{\varepsilon}\right )^2}\right )}\ln\frac{1}{\varepsilon}+O(1)\\
\geq& \pi\min_{\overline{\Omega}}q^2\sqrt{det(K_H)}\ln\frac{1}{\varepsilon}+O(1).
\end{split}
\end{equation*}
Combining this with Proposition \ref{le301-upbdd}, we get \eqref{3-27}. The proof is thus complete.

\end{proof}

\section{Proof of Theorem \ref{thm1} and \ref{thm01}}
\subsection{Proof of Theorem \ref{thm1}}
In  subsections 3.1 to 3.4, we prove the existence and asymptotic behavior of solutions   of \eqref{eq1} under the additional assumption that there exist minimum points of $ q^2\sqrt{det(K_H)} $ on the $ x_1 $-axis.

Now we give  proof of Theorem \ref{thm1} in the case that all minimum points of $ q^2\sqrt{det(K_H)} $ is not on the $ x_1 $-axis. Let $ \hat{x}=(|\hat{x}|\cos\theta_{\hat{x}}, |\hat{x}|\sin\theta_{\hat{x}}) $ be a minimizer of $ q^2\sqrt{det(K_H)} $ on $ \overline{\Omega} $.

Let $ \Omega_{\hat{x}}=\{\bar{R}_{\hat{x}}x\mid x\in \Omega\} $. For any function $ u\in \mathcal{H}(\Omega) $, let $ u_{\hat{x}}(x)=u(\bar{R}_{-\hat{x}}x)$ for any $  x\in\Omega_{\hat{x}}. $
So $ u_{\hat{x}}\in \mathcal{H}(\Omega_{\hat{x}}) $. Let $ q_{\hat{x}}(x)=q(\bar{R}_{-\hat{x}}x)$ for any  $x\in\Omega_{\hat{x}}. $ Then by Lemma \ref{le04}, we get that $ u $ is a solution of \eqref{eq1} if and only if
$ u_{\hat{x}} $ is a solution of
\begin{equation}\label{eq2}
\begin{cases}
-\text{div}(K_H(x)\nabla v)=\frac{1}{\varepsilon^2} \left (v-q_{\hat{x}}\ln\frac{1}{\varepsilon}\right )^{p}_+,\ \ \ \ &\text{in}\ \Omega_{\hat{x}}.\\
v=0,\ \ \ \ &\text{on}\ \partial \Omega_{\hat{x}}.
\end{cases}
\end{equation}

For equations \eqref{eq2}, we claim that there exist minimum points of $ q_{\hat{x}}^2\sqrt{det(K_H)} $ on the $ x_1 $-axis. Indeed, we can prove that $ (|\hat{x}|,0)=\bar{R}_{\hat{x}}\hat{x}\in \overline{\Omega_{\hat{x}}} $ and
\begin{equation*}
q_{\hat{x}}^2\sqrt{det(K_H)}(\bar{R}_{\hat{x}}\hat{x})=\frac{kq_{\hat{x}}^2(\bar{R}_{\hat{x}}\hat{x})}{\sqrt{k^2+|\bar{R}_{\hat{x}}\hat{x}|^2}}=\frac{kq^2(\hat{x})}{\sqrt{k^2+|\hat{x}|^2}}=\min_{\overline{\Omega}}q^2\sqrt{det(K_H)}=\min_{\overline{\Omega_{\hat{x}}}}q_{\hat{x}}^2\sqrt{det(K_H)},
\end{equation*}
which implies that $ (|\hat{x}|,0) $ is a minimizer  of $ q_{\hat{x}}^2\sqrt{det(K_H)} $.

Hence we can repeat the proof in subsections 2.1 and 3.1-3.4 to show that
there exist a family of solutions $ v_\varepsilon $ of \eqref{eq2} concentrating near minimum points of $ q_{\hat{x}}^2\sqrt{det(K_H)} $. Define $ u_\varepsilon(x)=v_\varepsilon(\bar{R}_{\hat{x}}x) $ for any $ x\in \Omega $. Then $\{ u_\varepsilon\} $ is a family of solutions of \eqref{eq1} which concentrates near a minimizer of $ q^2\sqrt{det(K_H)} $ as $ \varepsilon\to 0. $

Let $ w_\varepsilon=\mathcal{L}_H u_\varepsilon, \varphi_\varepsilon=u_\varepsilon -q\ln\frac{1}{\varepsilon}$. Then $ (w_\varepsilon, \varphi_\varepsilon) $ is the desired solution pair of Theorem \ref{thm1}.

\subsection{Proof of Theorem \ref{thm01}}
Based on results in Theorem \ref{thm1}, we now give the proof of Theorem \ref{thm01}. Let  $ q(x)=m $ for every $ m>0 $. Then  $ q $ satisfies $ \mathcal{L}_Hq=0 $. So by  Theorem \ref{thm1}, there exist  solutions $ u_\varepsilon $ of \eqref{rot eq2} with $ f(t)=t^p_+ $ and $ \mu=m\ln\frac{1}{\varepsilon} $ concentrating near $ x^* $, which is a minimizer of $ q^2\sqrt{det(K_H)} $. Since
\begin{equation*}
q^2\sqrt{det(K_H)}(x)=m^2\left( \frac{k^2}{k^2+|x|^2}\right) ^{\frac{1}{2}},
\end{equation*}
we get  that $ x^*\in\overline{\Omega} $ satisfies $ |x^*|=\max\limits_{\overline{\Omega}}|x| $. Let $ w_\varepsilon=\mathcal{L}_H u_\varepsilon  $. By Lemma \ref{circulation}, the limit of circulation is
\begin{equation*}
\lim\limits_{\varepsilon\to 0}\kappa(w_\varepsilon) =2\pi q(x^*)\sqrt{det(K_H(x^*))}=2\pi m\cdot \left( \frac{k^2}{k^2+|x^*|^2}\right) ^{\frac{1}{2}}=\frac{2 k\pi m}{\sqrt{k^2+|x^*|^2}}.
\end{equation*}
To conclude, $ (w_\varepsilon, u_\varepsilon) $ is the desired solution pair and the proof of Theorem \ref{thm01} is complete.

\begin{remark}
From the proof of Theorem \ref{thm01}, we see that the limiting location $ x^* $ of $ w_\varepsilon $ satisfies $ |x^*|=\max\limits_{\overline{\Omega}}|x| $. So $ x^* $ must be on the boundary of $ \Omega$.  This implies that results in subsection 3.4 can not hold. In this case,  the optimal estimates of diameter of $ A_\varepsilon $ is Lemma \ref{diam}, rather than Proposition \ref{better esti}.
\end{remark}

\subsection*{Acknowledgments:}

\par
D. Cao was supported by NNSF of China (grant No. 11831009) and Chinese Academy of Sciences by grant QYZDJ-SSW-SYS021. J. Wan was supported by NNSF of China (grant No. 12101045) and  Beijing
Institute of Technology Research Fund Program for Young Scholars (No.3170011182016).

\end{document}